\newtheorem{thm}{Theorem}[section]
\newtheorem{cor}[thm]{Corollary}
\newtheorem{lem}[thm]{Lemma}
\newtheorem{prop}[thm]{Proposition}
\newtheorem*{theo}{Theorem}
\theoremstyle{definition}
\newtheorem{defn}[thm]{Definition}
\newtheorem{prop-def}[thm]{Proposition--Definition}
\newtheorem{claim}[thm]{Claim}
\newtheorem{sublem}[thm]{Fact}
\newtheorem{exmp}[thm]{Example}
\theoremstyle{remark}
\newtheorem{rmk}[thm]{Remark}
\newcommand\dashto{\mathrel{
  -\mkern-6mu{\to}\mkern-20mu{\color{white}\bullet}\mkern12mu
}}
\newcommand{\nn}{{\mathbb N}}
\newcommand{\zz}{{\mathbb Z}}
\newcommand{\cc}{{\mathbb C}}
\newcommand{\kk}{{\mathbb C}}
\newcommand{\modu}{\mathscr}
\newcommand{\sh}{\mathcal}
\begin{document}

\title[The irreducibility of the generalized Severi varieties]{The irreducibility of the generalized Severi varieties}

% author one information

\author[A. Zahariuc]{Adrian Zahariuc}
\address{Department of Mathematics, UC Davis, One Shields Ave, Davis, CA 95616}
\email{azahariuc@math.ucdavis.edu}
%\thanks{thanks}

\subjclass[2010]{Primary 14H50, 14H10, 14D06.}

\keywords{Severi problem, irreducibility, Severi variety, generalized Severi varieties, Caporaso--Harris recursion, degeneration formula.}

%\date{}

%\dedicatory{}

\begin{abstract}
We give an inductive proof that the generalized Severi varieties -- the varieties which parametrize (irreducible) plane curves of given degree and genus, with a fixed tangency profile to a given line at several general fixed points and several mobile points -- are irreducible.
\end{abstract}

\maketitle

\tableofcontents

\section*{Introduction}
The Severi variety $\smash{ V_{d,g} \subset {\mathbb P}^N = |{\sh O}_{{\mathbb P}^2}(d)| }$ parametrizes degree $d$ genus $g$ irreducible plane curves with at worst nodal singularities. Let $\smash{ \overline{V}_{d,g} }$ be the closure inside $\smash{ |{\sh O}_{{\mathbb P}^2}(d)| }$. These parameter spaces were introduced almost a century ago by Francesco Severi with the purpose of analyzing the moduli space $M_g$ of abstract curves via the rational map $\smash{ \overline{V}_{d,g} \dashto M_g }$. In \emph{Anhang F} to \emph{Vorlesungen \"{u}ber algebraische Geometrie}, he claimed the following properties of $V_{d,g}$.
\begin{theo}[\cite{[Se21], [Ha86]}] 
$V_{d,g}$ is a smooth, nonempty, irreducible variety of dimension $3d+g-1$.
\end{theo}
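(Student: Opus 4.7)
The plan is to deduce the theorem from a stronger inductive statement about the \emph{generalized Severi varieties} $V^{d,g}(\alpha,\beta)$, which parametrize degree $d$, geometric genus $g$ irreducible nodal plane curves with prescribed tangency profile $(\alpha,\beta)$ to a fixed line $L$: $\alpha_k$ (resp.~$\beta_k$) counts order-$k$ contacts at fixed (resp.~mobile) points of $L$, with $\sum k(\alpha_k+\beta_k) = d$. After choosing $L$, the classical $V_{d,g}$ is birational to $V^{d,g}(0,(d,0,\ldots))$, so all three assertions reduce to the corresponding statements for the $V^{d,g}(\alpha,\beta)$.

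Smoothness and the dimension formula are local and follow from deformation theory: the Zariski tangent space at a nodal $[C]$ is the appropriate subspace of $H^0(C,\sh{N}_{C/\mathbb{P}^2})$ carved out by the tangency data, and a Riemann--Roch computation on the partial normalization yields the expected dimension. For non-emptiness, the union of $d$ general lines has arithmetic genus $\binom{d-1}{2}$ and $\binom{d}{2}$ nodes; smoothing an appropriate subset of the nodes, compatibly with a prescribed tangency configuration on $L$, produces an explicit point of $V^{d,g}(\alpha,\beta)$.

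Irreducibility is the crux, to be proved by double induction on $(d,|\alpha|)$. The inductive mechanism is the \emph{Caporaso--Harris degeneration}: specialize one chosen mobile tangency point on $L$ to a general fixed position, obtaining a one-parameter family whose central fiber, described by the degeneration formula (equivalently, by a deformation to the normal cone of $L$), is a finite union of strata, each a product of generalized Severi varieties with strictly smaller invariants -- either $|\alpha|$ has increased (a mobile point has become fixed) or the curve has acquired $L$ as a component with some multiplicity, dropping $d$. By the inductive hypothesis every such stratum is irreducible.

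The main obstacle, and the substance of the argument, is to show that these boundary strata all lie in the closure of a \emph{single} irreducible component of $V^{d,g}(\alpha,\beta)$, rather than several. The strategy is to produce explicit codimension-one degenerations that bridge pairs of strata -- typical moves being the merging of two order-$k$ tangencies into a single order-$2k$ one, or the exchange of a fixed and a mobile tangency point along a suitable moving curve -- and to chain them together until any two boundary strata of the central fiber are connected. Once this connectivity is established, the generic fiber is forced to be irreducible, completing the induction.
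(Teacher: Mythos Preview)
Your outline matches the paper's strategy closely: reduce to the generalized Severi varieties, run induction on the degree via a Caporaso--Harris type degeneration (the paper phrases this as degenerating $\mathbb{P}^2$ to $\mathbb{P}^2\cup_\ell\mathbb{F}_1$ and working with relative stable maps), observe that the boundary strata are irreducible by the inductive hypothesis, and then connect them by elementary moves. The paper's ``moves'' (upper connected, upper disconnected, lower disconnected operations on topological profiles) are exactly your ``merging two tangencies'' and ``exchange fixed/mobile'' moves, and the connectivity statement is its Proposition~3.1.

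However, your final sentence --- ``once this connectivity is established, the generic fiber is forced to be irreducible'' --- hides the two genuinely hard steps, and as written it is a gap.

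First, exhibiting a codimension-one degeneration joining two strata $S_\eta$ and $S_{\eta'}$ does \emph{not} by itself show they lie in the same component of the total space: you need to argue that any component $\mathcal{X}$ containing $S_\eta$ also contains $S_{\eta'}$. The paper does this (Proposition~5.2) by invoking Li's construction of a line bundle $(\mathbf{L}_\eta,s_\eta)$ on the total moduli space whose zero locus is exactly the stratum $S_\eta$. Then $s_\eta$ vanishes at the constructed intersection point, so it vanishes along a codimension-one locus of $\mathcal{X}$ there; a separate check that the intersection point lies on no excess component of $S_\eta$ then forces $\mathcal{X}\supset S_\eta$. Nothing like this mechanism appears in your sketch.

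Second, even after every component $\mathcal{X}$ is shown to contain every stratum, uniqueness of $\mathcal{X}$ does not follow: the strata typically appear with multiplicity $>1$ in the central fiber, so several components could share the same support there. The paper closes this by producing a profile $\eta_0$ (all contact orders equal to $1$) at which the total moduli space is \emph{smooth} (Proposition~5.4); a smooth point lies on a unique component. Your proposal does not mention multiplicities at all, and without this step the argument is incomplete.
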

Several years later, Severi's proof of irreducibility was acknowledged to be incorrect and only in the mid 80s, a proof using an intricate degeneration argument was given by Joe Harris ~\cite{[Ha86]}. The three main steps in the proof of irreducibility are the following: 

(1) locally near rational nodal curves (in the complex analytic topology), $\smash{ \overline{V}_{d,g} }$ consists of several smooth branches, each branch corresponding to smoothing a specific set of $g$ nodes of the rational curve; 

(2) monodromy acts fully on the set of nodes of the rational curve;  and 

(3) any irreducible component of $\smash{ \overline{V}_{d,g} }$ contains a nodal curve of strictly smaller genus, so, inductively,  it contains a nodal rational curve, which implies irreducibility by the previous steps. 

The third step is by far the most involved one and, although the argument by degeneration used to prove it may be less transparent, it is precisely this part of the proof which led to substantial progress in various directions over the following years. 

One such direction was the solution by Ran ~\cite{[Ran89]} and Caporaso and Harris ~\cite{[CH98]} of the enumerative problem of counting degree $d$ genus $g$ plane curves passing through $3d+g-1$ general points in the plane. Obviously, the problem amounts to computing the degrees of the Severi varieties. The solution in ~\cite{[CH98]} involves a refinement of the degeneration argument used to prove step (3) above and the answer comes in the form of a recursive formula involving the degrees of the so-called generalized Severi varieties: the varieties parametrizing (possibly reducible!) plane curves of given degree and geometric genus, with a fixed tangency profile to a given line at several general fixed points and several mobile points.

Although in the case of the projective plane the proof of irreducibility predates the computation of degrees, in terms of further developments and generalizations to other surfaces, the situation is the complete opposite. On one hand, there have been only a few further developments concerning the irreducibility problem ~\cite{[Ty06], [Te10]}. On the other hand, 
there has been a great amount of work and progress on the enumerative side, partly due to the availability of powerful tools such as (but not limited to) J. Li's degeneration formula ~\cite{[Li02]}, which allows one to systematically compute Caporaso--Harris type formulas.

Given that the genesis of the generalized Severi varieties is quite intimately linked with the irreducibility problem, it is natural to ask whether they are irreducible. The answer is obviously no and the reason is that the curves are allowed to be reducible. However, this requirement (or lack thereof) is mostly a cosmetic one meant to simplify the combinatorics in ~\cite{[CH98]}. It turns out that if we instead insist in the definition that the curves are irreducible (Definition \ref{main def}), then the generalized Severi varieties are indeed irreducible, as expected. To the best of the author's knowledge, this statement doesn't appear anywhere in the literature, although it seems to lie within the realm of what's provable by adapting the arguments in the famous proof ~\cite{[Ha86]} by J. Harris discussed above. The objective purpose of the paper is to prove this statement (Theorem \ref{main theorem}). However, we will not prove this statement by adapting the existing methods, so the subjective purpose is to outline and test a different approach to the irreducibility problem. 

Although in this paper we will only be concerned with the projective plane ${\mathbb P}^2$, we briefly digress to illustrate (in the case of K3s) the fundamental difficulties with generalizing the irreducibility statement to other surfaces. It is an open problem whether the Severi variety of genus $h \leq g$ curves in the primitive class of a general degree $2g-2$ K3 surface is irreducible. The analogue of step (3) above is known to hold by work of X. Chen ~\cite{[Ch01], [Ch16]}, that is, every irreducible component of the (closure of the) Severi variety contains nodal curves of strictly smaller genus. At first glance, this makes an analogous approach by induction on $h$ look attractive. The main problem is that the base case fails: the primitive class contains finitely many rational curves, so the genus $0$ Severi variety is certainly not irreducible. If we instead try to start at $h=1$, we immediately run into the problem that we lack the methods to prove this case. 

The bottom line is that, in order to approach the irreducibility problem in general, we need a method which avoids the reduction to the genus zero case.

The main novelty in the current paper is the observation that a part of the theory underlying the degeneration formula ~\cite{[Li01], [Li02]}, specifically the ingredient which allows one to separate the contributions from different topological profiles in the central fiber, offers an alternative approach to the irreducibility problem. Morally speaking, this crucial technical observation (\cite[Lemma 3.4]{[Li02]}, also Fact \ref{key cite}) goes back to the versal deformation space of a node. However, the way the idea is expressed in the language of relative stable maps is optimal for our needs.

The central idea of the proof is that, after degenerating the ambient plane\footnote{~\cite{[Ha86], [CH98]} express the analysis in terms of plane curves forced to "split off" a copy of a line. We will use a related point of view by degenerating the plane to the union of a plane and an ${\mathbb F}_1$ surface.} and thereby the moduli space, the key technical ingredient  alluded to in the previous paragraph allows us to argue that, under certain circumstances, intersecting components of the degenerate moduli space "coalesce"\footnote{The word "coalesce" is suggestive, but strictly speaking incorrect because of multiplicities issues. A priori we're only proving that some "layers" of the nonreduced components coalesce and this turns out to be sufficient.} (Proposition \ref{key trick}) when moving away from the degenerate central fiber. This reduces the proof to showing that a certain graph bookkeeping the discrete data labeling the various components and the desirable transitions between them is connected (Proposition \ref{landscape is connected}), which turns out to be completely straightforward. The reader may find the general principle to be perhaps reminiscent of that in the classic ~\cite{[DeMu69]}. A more detailed outline of the proof is given in the preamble to \S5. 

In principle, our approach works exclusively by induction on degree, bypassing the reduction to the high-degree-low-genus case, which, as explained above, represents the main obstacle to applying the usual methods in the case of irrational surfaces, where the problem appears to be universally open (even in the presumably simplest case of $E \times {\mathbb P}^1$ ~\cite{[Bu14]}). High degree rational curves do show up in the paper but in relation to some non-emptiness issues rather than irreducibility issues and their role doesn't seem to be central. In future work, we hope to investigate the extent to which the methods introduced in this paper are applicable to irrational surfaces.

We work over the field $\kk$ of complex numbers, as in ~\cite{[CH98], [Li01], [Se21]}.

\bigskip

\noindent \textit{Acknowledgments.} I would like to thank Brian Osserman for both encouragement and substantial mathematical help while working on this project and Joe Harris for introducing me to these problems.
 
\section{Notation and preliminaries} 
We will consider three flavors of compactifations: (1) the classical compactification obtained by taking Zariski closure in $|{\sh O}_{{\mathbb P}^2}(d)|$; (2) a compactification inside the Kontsevich moduli space of stable maps (reminiscent of the one used by Gathmann ~\cite{[Ga01]}, but less sophisticated); and (3) the relative stable maps compactification of Li ~\cite{[Li01], [Li02]}. All three will be used in the proof.

\subsection{Generalized Severi varieties}\label{main def} Our first concern is to introduce the main objects studied in this paper, the generalized Severi varieties, as outlined in the introduction. In short, the only essential difference compared to ~\cite{[CH98]} is that we require the curves to be irreducible. There are also cosmetic differences, such as our choice of initially requiring the curves to have at worst nodal singularities, but, after taking Zariski closures, all these differences turn out to be irrelevant \cite[Proposition 2.2]{[CH98]}. Let $\smash{ {\mathbb P}^N = |{\sh O}_{{\mathbb P}^2}(d)| }$ be the projective space parametrizing all degree $d$ plane curves. All generalized Severi varieties will be taken relative to the line at infinity $L = \{Z = 0\}$ in ${\mathbb P}^2[X:Y:Z]$. 

To define the generalized Severi varieties, we need to fix the following data: the genus $\smash{ g \leq {d-1 \choose 2} }$, $n, k$ nonnegative integers, $n$ distinct fixed points $p_1,p_2,...,p_n \in L$, and multiplicities $\smash{m^v_1,m^v_2,...,m^v_k}$ and $\smash{m^f_1,m^f_2,...,m^f_n}$ collectively adding up to $d$. For simplicity of notation, we encode the $n$ fixed contact points and the orders of contact at the fixed and variable points as tuples in the following way: $\Lambda = (p_1,p_2,...,p_n)$, $\smash{ {\mathbf m}_f = (m^f_1,m^f_2,...,m^f_n)} $ and $\smash{ {\mathbf m}_v = (m^v_1,m^v_2,...,m^v_k) }$. 

\begin{defn}\label{nice curve}
We say that an \emph{irreducible} degree $d$ plane curve $C \subset {\mathbb P}^2$ is nice relative to the given data if $C$ has the following properties:

(1) $C$ has precisely ${d-1 \choose 2} - g$ nodal singularities occurring away from the line $L$ and no other singularities;

(2) there are $k$ distinct points $\xi_1,\xi_2,...,\xi_k \in C^{\mathrm{ns}}({\kk})$ which are different from the $p_i$, such that
$$ (Z)|_C = \sum_{i=1}^{n} m^f_i p_i + \sum_{j=1}^{k} m^v_j \xi_j. $$
Of course, by $(Z)|_C$ we mean the scheme-theoretic vanishing of the restriction to $C$ of the section $Z \in \mathrm{H^0}({\sh O}_{{\mathbb P}^2}(1))$ defining $L$. 
\end{defn}

\begin{prop-def} 
The \emph{generalized Severi variety} is the locally closed subvariety $\smash{ V_{d,g}(\Lambda,{\mathbf m}_f;{\mathbf m}_v) \subset {\mathbb P}^N }$ whose set of $\kk$-points is the set of nice curves relative to the given data. 
\end{prop-def}

By a slight abuse of language, we will usually call its closure $\smash{ \overline{V}_{d,g}(\Lambda,{\mathbf m}_f;{\mathbf m}_v ) }$ in ${\mathbb P}^N$ a generalized Severi variety as well. The distinction will always be clear from context. Finally, a comment on the assumption that the fixed contact points are distinct. Contrary to the definition above, we will consider cases where two points $p_i$ and $p_j$ coincide and this is actually absolutely crucial in the proof of the main theorem. However, in such cases, the curve will be regarded as an element of a generalized Severi variety where $p_i$ and $p_j$ are mobile contact points, as our subsequent definitions will make clear.

Throughout most of the paper, we will be concerned mostly with the case when all contact points are fixed, that is, the case $k=0$. In this case, we will write simply $\smash{ \overline{V}_{d,g}(\Lambda,{\mathbf m}) }$ for the generalized Severi variety, where $\smash{ {\mathbf m} = {\mathbf m}_f }$ and ${\mathbf m}_v$ is empty, so it will be omitted from our notation. Similarly, if all contact points are mobile, we will write $\smash{ \overline{V}_{d,g}({\mathbf m}) }$, where ${\mathbf m} = {\mathbf m}_v$ and ${\mathbf m}_f$ and $\Lambda$ are understood to be empty.

\begin{exmp}\label{hs algebra}
For each $d$, there exists a nice degree $d$ rational curve with a single contact point of multiplicity $d$ with the line at infinity. For instance, if $d \geq 3$, we may take the rational curve given parametrically by the map $\smash{ {\mathbb P}^1 \stackrel{\alpha}{\to} {\mathbb P}^2[X:Y:Z] }$
$$ [u:w] \mapsto [u^d+u^{d-1}w+uw^{d-1}+w^d:u^d:w^d] $$
Indeed, it is easy to verify that the points $[\sqrt[d-2]{-1-\epsilon-...-\epsilon^{d-2}}:1] \in {\mathbb P}^1$ are preimages of nodes of the plane curve, where $1 \neq \epsilon \in \kk$ is a $d$-th root of unity and that $\mathrm{d}\alpha$ is injective at all points. Clearly, $[1:1:0]$ is the only point where the plane curve intersects the line at infinity.
\end{exmp}

\begin{thm}[\cite{[CH98]} and folklore]\label{general thm} Let $\Lambda$ be  a general collection of points and all discrete data as above. Then the generalized Severi variety $\smash{ V_{d,g}(\Lambda,{\mathbf m}_f;{\mathbf m}_v ) }$ is nonempty, smooth and equidimensional of dimension $2d+g+k-1$. The degree (of the closure) is computed by the algorithm described in \cite[\S\S1.4]{[CH98]}.
\end{thm}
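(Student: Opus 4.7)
My plan is to handle the four assertions---nonemptiness, smoothness, equidimensionality of dimension $2d+g+k-1$, and the degree recursion---essentially independently, drawing on \cite{[CH98]} together with a standard deformation-theoretic calculation.

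For smoothness and the dimension, I would compute the tangent space at a nice curve $C$ directly. Write $\nu\colon\tilde C\to C$ for the normalization and $\phi=\iota\circ\nu\colon\tilde C\to\mathbb P^2$ for the induced map. The tangent space to $V_{d,g}(\Lambda,\mathbf m_f;\mathbf m_v)$ at $[C]$ is $H^0(\tilde C,\mathcal L)$, where $\mathcal L$ is obtained from the equigeneric normal sheaf (which I identify with a line bundle on $\tilde C$, encoding preservation of the $\delta$ nodes of $C$) by twisting down by $m_i^f$ at each preimage of a fixed contact point $p_i$ and by $m_j^v-1$ at each preimage of a mobile contact point $\xi_j$. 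A Riemann--Roch calculation, starting from $\deg\mathcal N_\phi=3d+2g-2$ and subtracting the $d-k$ tangency conditions, gives $\deg\mathcal L=2d+2g+k-2$, which exceeds $2g-2$, so $h^1(\mathcal L)=0$ and $h^0(\mathcal L)=2d+g+k-1$. This proves smoothness and the expected dimension at every nice curve, whence equidimensionality.

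Nonemptiness follows by combining Example \ref{hs algebra}---which exhibits a nice rational curve with the extreme tangency profile $(d)$---with a standard specialization argument: form a reducible configuration using rational curves of the type in Example \ref{hs algebra} (in lower degrees) together with auxiliary lines so that the total tangency profile and node count match the prescribed data, then smooth. Smoothness of the generalized Severi variety at the limit, combined with a parameter count, guarantees that the smoothing lies in $V_{d,g}(\Lambda,\mathbf m_f;\mathbf m_v)$. Alternatively, nonemptiness is built into the inductive framework of \cite{[CH98]}. The degree claim is a direct citation of \cite[\S1.4]{[CH98]}; the only subtlety is that the CH generalized Severi variety allows reducible curves, whereas our $\overline V$ is the closure of the irreducible locus. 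However, our $\overline V$ is a union of irreducible components of CH's variety---in fact, by the main theorem of the paper, the unique component whose generic member is irreducible---and the Caporaso--Harris recursion tracks this component in each step.

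The main obstacle I expect is this last point: cleanly separating the irreducible component from the strictly reducible contributions in the CH degree recursion. This reduces to invoking the paper's main irreducibility theorem and bookkeeping tangency data through line degenerations, and it is the least off-the-shelf part of the plan.
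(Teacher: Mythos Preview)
Your approach is essentially the paper's. The smoothness and dimension argument is identical: the paper also identifies the tangent space at a nice curve with $\mathrm{H}^0(\tilde C,\mathcal N_\nu(-D))$ for $D=\sum m_i^f p_i+\sum(m_j^v-1)\xi_j$, computes $\deg\mathcal N_\nu(-D)=2g-2+2d+k$ from $\mathcal N_\nu\cong\omega_{\tilde C}\otimes\nu^*\mathcal O_{\mathbb P^2}(3)$, and concludes $h^1=0$, $h^0=2d+g+k-1$. Your non-emptiness sketch is also exactly what the paper proposes (start from a union of rational curves as in Example~\ref{hs algebra} and smooth selected nodes while preserving the contact with $L$; the paper in fact points to the first half of the proof of Proposition~\ref{UC dopelganger} as the template and leaves it to the reader).

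The one place your write-up drifts is the degree claim. The paper treats this as a pure citation to \cite[\S1.4]{[CH98]} and says nothing further. Your proposed route---invoke the Main Theorem~\ref{main theorem} to single out the irreducible component and then say the Caporaso--Harris recursion ``tracks this component''---is both unnecessary and not quite what happens. Caporaso--Harris already give a separate recursion for the irreducible locus (via the usual exponential/logarithm relation between the generating functions for connected and disconnected counts), so irreducibility of the Severi variety is not an input. Moreover, knowing the variety is irreducible does not by itself tell you how to isolate its degree inside the recursion for the reducible-allowed variety; you still need the combinatorial separation that \cite{[CH98]} supplies. There is no logical circularity in your suggestion (the degree statement is not used in the proof of Theorem~\ref{main theorem}), but it adds nothing: just cite \cite{[CH98]} for the irreducible-curve recursion and move on.
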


The dimension computation follows from the analogous statement in \cite{[CH98]}. Smoothness is a fairly straightforward deformation theoretic computation treating the curve as an unramified map $\smash{\nu: \tilde{C} \to {\mathbb P}^2}$. The space of first order deformations of $C$ is the space $\smash{ \mathrm{H}^0(\tilde{C},{\sh N}_\nu(-D)) }$, where
\begin{equation}\label{stuff2.2}
D = \sum_{i=1}^{n} m^f_i p_i + \sum_{j=1}^{k} (m^v_j-1) \xi_j \in \mathrm{Div}(\tilde{C}),
\end{equation}
cf. ~\cite{[CH98], [HM01]}. The fact that $\smash{ \mathrm{H}^1(\tilde{C},{\sh N}_\nu(-D)) = 0}$ turns out to be trivial for degree reasons, which implies smoothness. Indeed, we have $\deg {\sh N}_{\nu}(-D) = 2g-2+2d+k$ since ${\sh N}_{\nu} = \omega_{\tilde{C}} \otimes \nu^*{\sh O}_{{\mathbb P}^2}(3)$
from the short exact sequence for the normal sheaf of $\nu$, hence $h^0({\sh N}_{\nu}(-D)) = 2d+g-1+k$. It goes without saying that the closure $\smash{ \overline{V}_{d,g}(\Lambda,{\mathbf m}_f;{\mathbf m}_v ) }$ is very badly singular. For a discussion of the singularities of the usual Severi varieties in codimension one, please see ~\cite{[DiHa88]}.

Let $\smash{ q:\overline{V}_{d,g}(\Lambda,{\mathbf m}_f;{\mathbf m}_v) \dashto L^{[k]} }$ be the map giving the "residual" set-theoretic intersection with $L$. This is a rational map, defined on a dense open subset of the source. Similarly, the $q$-relative tangent space to $[C]$ is $\smash{ \mathrm{H}^0(\tilde{C},{\sh N}_\nu(-D_f)) }$, where
\begin{equation}
D_f = \sum_{i=1}^{n} m^f_i p_i + \sum_{j=1}^{k} m^v_j \xi_j \in \mathrm{Div}(\tilde{C}).
\end{equation}
A similar calculation shows that the dimension of this space is equal to the $2d+g-1$, meaning that $q$ is smooth. A trivial consequence is the following remark.

\begin{rmk}\label{early def th}
The restriction of the map $\smash{q }$ to every hypothetical irreducible component of $\smash{\overline{V}_{d,g}(\Lambda,{\mathbf m}_f;{\mathbf m}_v) }$ is dominant. 
\end{rmk}
 
The only part of Theorem \ref{general thm} which doesn't seem to be stated explicitly or implicitly anywhere in the literature is the non-emptiness part. Of course, since the degrees of the generalized Severi varieties are known, one can perhaps argue that non-emptiness is no longer an interesting question.\footnote{The point is that the degree is computable, but the formula doesn't seem to be explicit enough to allow us to check immediately that the number is not equal to zero, partly because the generalized Severi varieties in the sense of Caporaso and Harris have additional components.} Alternatively, it is possible to use methods essentially identical to those used in the first half of the proof of Proposition \ref{UC dopelganger} to rigorously establish non-emptiness. Briefly, we start with a reducible curve whose components are analogous to the curve in Example \ref{hs algebra} above and deform it smoothing some of the nodes and preserving others, while at the same time keeping the contact with $L$ fixed. We leave this exercise to the reader.

The main purpose of the paper is to prove the following theorem.

\begin{thm}[Main Theorem]\label{main theorem}
Let $\Lambda$ be a general collection of points and all discrete data as above. Then the generalized Severi variety $\smash{ V_{d,g}(\Lambda,{\mathbf m}_f;{\mathbf m}_v ) }$ is irreducible.
\end{thm}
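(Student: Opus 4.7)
The plan is to induct on the degree $d$, with small base cases ($d \leq 2$) handled directly since the relevant generalized Severi varieties are either empty or manifestly rational. For the inductive step, I would degenerate $\mathbb{P}^2$ to the simple normal crossings surface $X_0 = \mathbb{P}^2 \cup_L \mathbb{F}_1$ (with the two components glued along $L$ on one side and the $(-1)$-curve of $\mathbb{F}_1$ on the other), via a semistable family $\mathcal{X}\to\Delta$ with generic fiber $\mathbb{P}^2$. Simultaneously, I would allow the fixed tangency points $\Lambda$ to limit into a chosen partition across the two components. Li's relative stable maps compactification then specializes $\smash{\overline{V}_{d,g}(\Lambda,\mathbf{m}_f;\mathbf{m}_v)}$ over $\Delta^*$ to a stratified moduli space on $X_0$, whose strata are indexed by \emph{splitting profiles} -- the combinatorial data recording, for each irreducible component of the source curve, which piece of $X_0$ it maps to and the contact orders along the double locus.

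Each central stratum is, up to matching the contact orders along the double locus, a fiber product of two smaller generalized Severi varieties, one on $\mathbb{P}^2$ and one on $\mathbb{F}_1$, each of strictly smaller degree than $d$. By the inductive hypothesis -- assuming a parallel irreducibility statement for $\mathbb{F}_1$, which one establishes in tandem by the same degeneration argument -- each stratum is therefore irreducible. Consequently, the irreducibility of $\smash{\overline{V}_{d,g}(\Lambda,\mathbf{m}_f;\mathbf{m}_v)}$ reduces to showing that the \emph{profile graph}, whose vertices are the splitting profiles and whose edges are the allowed transitions of Proposition \ref{key trick}, is connected.

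The engine of the argument is Proposition \ref{key trick}, which, following the technical input of Li's lemma (Fact \ref{key cite}), should assert that when two central strata meet along a locus where the contact multiplicities along the double locus are compatible, the intersection lifts -- at the level of a ``layer'' rather than scheme-theoretically, owing to the intrinsic non-reducedness predicted by Li's formula -- to a common irreducible family over $\Delta^*$. The target connectedness (Proposition \ref{landscape is connected}) is then a short combinatorial exercise: by performing elementary moves such as transferring a unit of contact multiplicity between a fixed and a mobile point, or peeling off a line on one component and re-attaching compatible multiplicities on the other, one can connect any two splitting profiles in a controlled number of steps. Remark \ref{early def th} will be useful here to ensure that all relevant configurations are realized.

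The main obstacle, in my view, is the coalescence step: converting the informal picture ``intersecting central strata coalesce'' into a statement that survives the multiplicities and non-reducedness issues flagged in the author's footnote, while still being strong enough to identify irreducible components of the generic fiber. A secondary concern is the care required when fixed contact points are allowed to collide during the degeneration -- per Definition \ref{main def}, the colliding points must then be reinterpreted as mobile tangency points, and the induction must be organized to accommodate this reinterpretation without breaking the strict decrease in degree that makes the inductive hypothesis applicable.
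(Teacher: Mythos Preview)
Your architecture matches the paper's---induct on $d$, degenerate $\mathbb{P}^2$ to $\mathbb{P}^2 \cup_\ell \mathbb{F}_1$, invoke Li's relative compactification, and combine the connectedness of the profile graph with the coalescence proposition---but two essential ingredients are missing.

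First, you propose running a parallel irreducibility induction for Severi varieties on $\mathbb{F}_1$. The paper avoids this entirely. Instead, it invokes the Caporaso--Harris analysis (Theorem~\ref{CH version}, Corollary~\ref{CH corollary}) to show that every irreducible component of the total family already meets the central fiber in a stratum from the \emph{small landscape} $\Omega^s$: the height-one profiles in which the $\mathbb{F}_1$-pieces are genus-zero curves meeting each ruling fiber once, together with some multiple fibers. For these profiles the $\mathbb{F}_1$-factor is essentially a $\mathbb{G}_m$-torsor and needs no inductive input. Your parallel induction, by contrast, has no obvious well-founded ordering (the $\mathbb{F}_1$-side can carry arbitrary genus in a general profile), and your assertion that both factors have ``strictly smaller degree than $d$'' does not by itself control it.

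Second---and this is the gap you yourself flag as the main obstacle---you have no mechanism to close the argument once coalescence is only known at the topological (``layer'') level. The paper's resolution is Proposition~\ref{multiplicity one}: one exhibits a specific profile $\eta_0 \in \Omega^s$ with all contact multiplicities along $\ell$ equal to $1$ and a single $\mathbb{F}_1$-vertex, and computes directly (by splitting the normal sheaf on each component of the source) that the total space $\overline{\modu M}_{g}({\mathfrak W},d|\Lambda,{\mathbf m})$ is \emph{smooth} of the expected dimension at a general point of the core $\overline{\modu K}(\eta_0)$. Hence that core appears with multiplicity one in the central fiber, and at most one irreducible component of the total space can contain it. Combined with the connectedness of $\Omega^s$ and Proposition~\ref{key trick}, this forces uniqueness. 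Without such a smooth-point step, knowing that every component contains every core topologically does not preclude several components sharing the same central fiber set-theoretically.
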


Note that Remark \ref{early def th} implies that, in order to prove the Main Theorem \ref{main theorem}, it suffices to prove the special case when all contact points are fixed, that is, it suffices to prove that $\smash{\overline{V}_{d,g}(\Lambda,{\mathbf m}) }$ is irreducible for all ${\mathbf m} = {\mathbf m}_f$ and general $\Lambda$. Indeed, assume that the latter statement holds. Then the map $\smash{q}$ has generically irreducible fibers of the same dimension for all ${\mathbf m}_f$, ${\mathbf m}_v$ and general $\Lambda$, so $\smash{ \overline{V}_{d,g}(\Lambda,{\mathbf m}_f;{\mathbf m}_v) }$ has a unique irreducible component which $q$-dominates $\smash{ L^{[k]} }$. By Lemma \ref{early def th}, it can't have any other irreducible components, hence it is irreducible. 

Let $\smash{ \hat{V}_{d,g} ({\mathbf m}) \to V_{d,g} ({\mathbf m}) }$ be the degree $k!$ \'{e}tale cover of $\smash{ V_{d,g} ({\mathbf m}) }$ which parametrizes nice curves as before, but whose variable intersection points with $L$ are now indexed. Of course, $\smash{ \hat{V}_{d,g} ({\mathbf m}) }$ may be disconnected, but all connected components are isomorphic. By the same reasoning as above, the irreducibility of these connected components also follows from the a priori particular case when all contact points are fixed. 

\subsection{The stable maps compactification} In this subsection, we outline a straightforward alternate compactification of the generalized Severi varieties, this time using the language of stable maps. However, we will only treat the case when all contact points with $L$ are fixed. Consider the same given data as in the previous subsection, with the assumption $k=0$. We assume again that $p_i \neq p_j$ for all $i \neq j$. Let $\smash{ \overline{\modu M}_{g,n}({\mathbb P}^2,d) }$ be the Kontsevich moduli space of degree $d$ arithmetic genus $g$ stable maps into ${\mathbb P}^2$ with $n$ marked points. We have an evaluation morphism at the $n$ marked points
$$ \mathrm{ev}: \overline{\modu M}_{g,n}({\mathbb P}^2,d) \longrightarrow ({\mathbb P}^2)^n. $$
Let $\smash{\overline{\modu M}_{g}({\mathbb P}^2,d|\Lambda,{\mathbf m})}$ be the closed substack of $\smash{ \overline{\modu M}_{g,n}({\mathbb P}^2,d) }$ where the $i$th marked point maps to $p_i$ and the pullback to the source of the map of the section $Z \in \mathrm{H}^0({\sh O}_{{\mathbb P}^2}(1))$ vanishes to order at least $m_i$ at the $i$th marked point. However, this includes many unwanted situations such as those when the marked points lie on components mapping to $L$. At the expense of unpleasant technical complications later in the paper, we will rectify this simply by taking closures as follows.

Let $C_{d,g}(\Lambda,{\mathbf m}) \subset {\mathbb P}^2 \times V_{d,g}(\Lambda,{\mathbf m}) \to V_{d,g}(\Lambda,{\mathbf m})$ be the universal family of embedded curves over the generalized Severi variety constructed in the previous subsection and $\smash{ \widetilde{C}_{d,g}(\Lambda,{\mathbf m}) }$ its normalization. It easy to see that the normalized universal family $\smash{ \widetilde{C}_{d,g}(\Lambda,{\mathbf m}) \to V_{d,g}(\Lambda,{\mathbf m}) }$ is smooth, and of course the contact points with $L$ remain different, so we obtain a map
\begin{equation}\label{sketchy immersion}
V_{d,g}(\Lambda,{\mathbf m}) \longrightarrow \overline{\modu M}_{g,n}({\mathbb P}^2,d).
\end{equation}
The key observation is that this map is an immersion. The subtle part of the verification is to check that the map is unramified. However, all this is implicit in the description of the tangent space to the generalized Severi variety (\ref{stuff2.2}). We leave the details to the reader, but we note that without the assumption that $V_{d,g}(\Lambda,{\mathbf m})$ parametrizes only curves which are at worst nodal, the statement is false. 

We define $\smash{ \overline{\modu S}_{g}({\mathbb P}^2,d|\Lambda,{\mathbf m}) }$ to be the closure of the image of (\ref{sketchy immersion}), i.e. we take the closure inside the underlying topological space and endow it with the reduced closed substack structure. Thus $\smash{ \overline{\modu S}_{g}({\mathbb P}^2,d|\Lambda,{\mathbf m}) }$ is a closed substack of $\smash{ \overline{\modu M}_{g,n}({\mathbb P}^2,d) }$ by construction. The letter "${\modu S}$" means "coming from the Severi variety."

\subsection{The relative stable maps compactification} We refer the reader to ~\cite{[Li01], [Li02]} for the general theory of relative stable maps.  Let ${\mathfrak P}^\mathrm{rel}$ be the stack of expansions of the pair $({\mathbb P}^2, L)$, as defined in ~\cite{[Li01]}. Given topological data $\Gamma$ consisting of the following information: an edgeless graph $V(\Gamma)$ with roots indexed by an indexing set $R$, with the incidence relation between vertices and roots described by a surjective function $\lambda:R \to V(\Gamma)$, two functions $d_{\mathfrak P}:V(\Gamma) \to {\mathbb N}^*$ and $g:V(\Gamma) \to {\mathbb N}$ representing the degrees and arithmetic genera of the connected components of the relative stable maps such that $\sum_{v \in V(\Gamma)} d_{\mathfrak P}(v) = d$ and $ \sum_{v \in V(\Gamma)} g(v) - |V(\Gamma)| + 1 = g$
and a map $\mu:R \to \nn^*$ giving the orders of tangency with the line $L$ at the distinguished marked points, such that $\sum_{\lambda(\alpha) = v} \mu(\alpha) = d_{\mathfrak P}(v)$ for each vertex $v$, a moduli space of relative stable maps $\smash{ \overline{\modu M}({\mathfrak P}^\mathrm{rel},\Gamma) }$ can be defined following ~\cite{[Li01]}. Note that we are using the symbol $\smash{\overline{\modu M}}$ instead of the symbol ${\mathfrak M}$ used in ~\cite{[Li01]}. For purely notational purposes which will become clear in the next sections, it is useful to allow an abstract indexing set $R$ instead of $\{1,2,...,n\}$. There is an evaluation map at the distinguished marked points 
$$ {\mathbf q}:\overline{\modu M}({\mathfrak P}^\mathrm{rel},\Gamma) \longrightarrow L^{|R|}.$$
We have an obvious map from the product of the Severi varieties with indexed only-variable contact points with the line $L$
$$ \prod_{v \in V(\Gamma)} \hat{V}_{d_{\mathfrak P}(v),g(v)}\left( \mu|_{\lambda^{-1}(v)} \right) \longrightarrow \overline{\modu M}({\mathfrak P}^\mathrm{rel},\Gamma). $$
We define $\overline{\modu S}({\mathfrak P}^\mathrm{rel},\Gamma)$ to be the closure of the image of this map. More generally, given a subset of indices $R_f \subseteq R$, let $\smash{ \Lambda = (p_{\alpha})_{\alpha \in R_f} \in L }$ and define
\begin{equation}\label{defrel}
\overline{\modu M}({\mathfrak P}^\mathrm{rel},\Gamma|\Lambda) = {\mathbf q}^{-1}\left(\{ \Lambda \} \times L^{|R|-|R_f|} \right)  
\end{equation}
and 
$\smash{ \overline{\modu S}({\mathfrak P}^\mathrm{rel},\Gamma|\Lambda) = \overline{\modu M}({\mathfrak P}^\mathrm{rel},\Gamma|\Lambda) \cap \overline{\modu S}({\mathfrak P}^\mathrm{rel},\Gamma) }$ to be the closed substacks parametrizing the relative stable maps which send the desired distinguished marked points to the fixed $p_{\alpha}$'s. We will use this construction most often for $R_f = R$. 

If $R_f = R$ and the $p_{\alpha}$ are general, then it is still true that $\smash{ \overline{\modu S}({\mathfrak P}^\mathrm{rel},\Gamma|\Lambda) }$ is a compactification of the product $\smash{ \prod_{v \in V(\Gamma)} V_{d_{\mathfrak P}(v),g(v)}\left( \Lambda|_{\lambda^{-1}(v)}, \mu|_{\lambda^{-1}(v)} \right) }$. However, thanks to the way we constructed the space, we may speak of $\overline{\modu S}({\mathfrak P}^\mathrm{rel},\Gamma|\Lambda)$ even when the entries of $\Lambda$ are not all distinct. This will be crucial in the next section, when we investigate precisely what happens when two of the points $p_\alpha$ collide.

\section{Doppelg\"{a}ngers} 

\subsection{General definitions} A "doppelg\"{a}nger" is a lookalike or a ghostly double of a person. A doppelg\"{a}nger of a stable map into $(Y,D)$ is roughly another stable map, whose image is the same $1$-cycle in $Y$ yet has a different configuration of contracted ("ghost") components mapping into $D$, which may or may not change its arithmetic genus. However, we will actually be interested in working with \emph{relative} stable maps, where components contracted into the boundary instead map non-constantly into expansions of the pair $(Y,D)$. 

We will consider two cases. The common data in the two cases is as follows. Let $R$ and $\smash{R'}$ be two indexing sets such that $\smash{ |R| = |R'|+1 }$ and a surjective function $\rho:R \to R'$. Let $\tilde{\alpha}'$ be the unique label with two $\rho$-preimages $\smash{ \tilde{\alpha}_1 }$ and $\smash{ \tilde{\alpha}_2 }$, so that $\rho$ induces a bijection $\smash{ R / (\tilde{\alpha}_1 \sim \tilde{\alpha}_2) \cong R' }$. We will consider two profiles $\smash{ \Gamma }$ and $\smash{ \Gamma' }$ with the same number $n$ of legs, two functions $\lambda:R \to V(\Gamma)$ and $\smash{ \lambda': R' \to V(\Gamma')} $ prescribing the roots on each connected component. The diagram
\begin{center}
\begin{tikzpicture}
\matrix [column sep  = 20mm, row sep = 7mm] {
	\node (nw) {$R$}; &
	\node (ne) {$R'$};  \\
	\node (sw) {$V(\Gamma)$}; &
	\node (se) {$V(\Gamma')$}; \\
};
\draw[->, thin] (nw) -- (ne);
\draw[->, thin] (ne) -- (se);
\draw[->, thin] (nw) -- (sw);
\draw[->, thin] (sw) -- (se);

\node at (0,-0.4) {$\rho_V$};
\node at (0,0.8) {$\rho$};
\node at (-1.8,0.1) {$\lambda$};
\node at (1.3,0.1) {$\lambda'$};

\end{tikzpicture}
\end{center}
will be called the \emph{root-vertex square}. The multiplicities are related by the formula $\smash{ \mu'(\alpha') = \sum_{\rho(\alpha) = \alpha'} \mu(\alpha) }$, for all $\smash{ \alpha' \in R' }$. Let $(p_{\alpha'})_{\alpha' \in R'} \in D(\kk)$ a collection of points on $D$ indexed by $R'$. Let $p_\alpha = p_{\rho(\alpha)}$ for $\alpha \in R$, so that $\smash{ p_{\tilde{\alpha}_1} = p_{\tilde{\alpha}_2} }$. Additionally, we have genus functions $g, g'$ with values in ${\mathbb N}$.

In the first case, we assume that $g(v) = g'(\rho(v))$, $\smash{ d \equiv d' }$, and that the root-vertex square is commutative, with $\rho_V$ a bijection. 

\begin{defn}\label{divergent doppelganger}
Let $N \geq 0$ be an integer. With the notation and assumptions above, a divergent doppelg\"{a}nger pair is a pair of relative stable maps 
\begin{equation*}
\begin{cases}
\left( C,(x_i)_{i=\overline{1,n}},(q_{\alpha})_{\alpha \in R},f \right) \in \overline{\modu M}({\mathfrak Y}^\mathrm{rel},\Gamma|\Lambda)(\kk)  \\
\left( C',(x'_i)_{i=\overline{1,n}},(q'_{\alpha'})_{\alpha' \in R'},f' \right) \in \overline{\modu M}({\mathfrak Y}^\mathrm{rel},\Gamma'|\Lambda')(\kk)
\end{cases}
\end{equation*}
such that the following conditions are satisfied:

(1) $f$ maps into the $(N+1)$-st order expansion $\smash{ {\mathfrak Y}^\mathrm{rel}[N+1] }$, while $\smash{ f' }$ maps into the previous expansion $\smash{ {\mathfrak Y}^\mathrm{rel}[N] }$ such that $ (\eta_{N+1} \circ f)(q_{\alpha}) = (\eta_N \circ f')(q'_{\alpha'}) = p_{\rho(\alpha)}$ for all $\alpha \in R$;

(2) $\eta_{N+1,N}(f(x_i)) = f'(x'_i)$, for all $i=1,2,...,n$; and

(3) There exists an isomorphism $\gamma:C \to C' \cup \bigcup_{\alpha' \in R'} C_{\alpha'}$, with $C_{\alpha'}$ a smooth rational curve attached transversally to $C'$ at the distinguished marked point $q_{\alpha'}$, such that the diagram
\begin{center}
\begin{tikzpicture}
\matrix [column sep  = 17mm, row sep = 8mm] {
	\node (nw) {$C'$}; &
	\node (ne) {$C$};  \\
	\node (sw) {${\mathfrak Y}^\mathrm{rel}[N]$}; &
	\node (se) {${\mathfrak Y}^\mathrm{rel}[N+1]$}; \\
};
\draw[->, thin] (nw) -- (ne);
\draw[right hook->, thin] (sw) -- (se);
\draw[->, thin] (nw) -- (sw);
\draw[->, thin] (ne) -- (se);

\node at (0,1) {$\gamma^{-1}|_{C'}$};
\node at (-2.1,0.1) {$f'$};
\node at (1.3,0.1) {$f$};
\end{tikzpicture}
\end{center} 
is commutative and the restriction of $f$ to $\gamma^{-1}(C_{\alpha'})$ is a degree $\mu'(\alpha')$ cover of a fiber of the ruling of the final ruled component of the expansion ${\mathfrak Y}^\mathrm{rel}[N+1]$, with multiplicity $\mu(\alpha)$ at $q_{\alpha}$, and total ramification at $\gamma^{-1}(q'_{\rho(\alpha')})$.

The integer $N$ is called the depth of the divergent doppelg\"{a}nger.
\end{defn}

Now we consider the second scenario. In this case, we require the root-vertex diagram to be a pushout in the category of sets and the genus functions satisfying conditions (\ref{genus convergent connected}) and (\ref{genus convergent disconnected}) below. Either $|V(\Gamma')| = |V(\Gamma)|$ or $|V(\Gamma')| = |V(\Gamma)| - 1$. In the former case, we require that
\begin{equation}\label{genus convergent connected}
g'(\rho_V(v)) =
\begin{cases}
g(v)+1  & \text{if $v \mapsto \tilde{\alpha}_1$ or $\tilde{\alpha}_2$,} \\[2ex]
g(v) & \text{otherwise},
\end{cases}
\end{equation}
while in the latter case we require that
\begin{equation}\label{genus convergent disconnected}
g'(v') = \sum_{\rho_V(v) = v'} g(v).
\end{equation}

\begin{defn}\label{convergent doppelganger}
A convergent doppelg\"{a}nger is a pair of relative stable maps 
\begin{equation*}
\begin{cases}
\left( C,(x_i)_{i=\overline{1,n}},(q_{\alpha})_{\alpha \in R},f \right) \in \overline{\modu M}({\mathfrak Y}^\mathrm{rel},\Gamma|\Lambda)(\kk)  \\
\left( C',(x'_i)_{i=\overline{1,n}},(q'_{\alpha'})_{\alpha' \in R'},f' \right) \in \overline{\modu M}({\mathfrak Y}^\mathrm{rel},\Gamma'|\Lambda')(\kk)
\end{cases}
\end{equation*}
such that the following conditions are satisfied:

(1) $f$ maps into the $N$th order expansion $\smash{ {\mathfrak Y}^\mathrm{rel}[N] }$, while $\smash{ f' }$ maps into the next expansion $\smash{ {\mathfrak Y}^\mathrm{rel}[N+1] }$ such that $(\eta_N \circ f)(q_{\alpha}) = (\eta_{N+1} \circ f')(q'_{\alpha'}) = p_{\rho(\alpha)}$ for all $\alpha \in R$;

(2) $f(x_i) = \eta_{N+1,N}(f'(x'_i))$, for all $i=1,2,...,n$; and

(3) There exists an isomorphism $\gamma:C' \to C \cup \bigcup_{\alpha' \in R'} C_{\alpha'}$, with $C_{\alpha'}$ a smooth rational curve attached transversally to $C$ at all the distinguished marked points whose label $\alpha$ satisfies $\rho(\alpha) = \alpha'$, such that the diagram
\begin{center}
\begin{tikzpicture}
\matrix [column sep  = 17mm, row sep = 8mm] {
	\node (nw) {$C$}; &
	\node (ne) {$C'$};  \\
	\node (sw) {${\mathfrak Y}^\mathrm{rel}[N]$}; &
	\node (se) {${\mathfrak Y}^\mathrm{rel}[N+1]$}; \\
};
\draw[->, thin] (nw) -- (ne);
\draw[right hook->, thin] (sw) -- (se);
\draw[->, thin] (nw) -- (sw);
\draw[->, thin] (ne) -- (se);

\node at (0,1) {$\gamma^{-1}|_{C}$};
\node at (-2.1,0.1) {$f$};
\node at (1.3,0.1) {$f'$};
\end{tikzpicture}
\end{center} 
is commutative and the restriction of $f'$ to $\gamma^{-1}(C_{\alpha'})$ is a degree $\mu'(\alpha')$ cover of a fiber of the ruling of the final ruled component of the expansion ${\mathfrak Y}^\mathrm{rel}[N+1]$, with multiplicity $\mu(\alpha)$ at each $\gamma^{-1}(q_{\alpha})$, $\rho(\alpha) = \alpha'$, and total ramification at $q'_{\rho(\alpha')}$.

The integer $N$ is called the depth of the convergent doppelg\"{a}nger pair.
\end{defn}

\begin{rmk}\label{branch point}
By the Riemann--Hurwitz formula, there is one more simple branch point of the cover $\smash{ f'|_{\gamma^{-1}(C_{\tilde{\alpha}'})} }$ of ${\mathbb P}^1$ to be accounted for. Indeed,
$$ \underbrace{ \text{ram. at $q'_{\rho(\tilde{\alpha}')}$ } }_{\mu'(\tilde{\alpha}')-1} + \sum_{i \in \{1,2\} }\underbrace{\text{ram. at $\gamma^{-1}(q_{\tilde{\alpha}_i})$}}_{\mu(\tilde{\alpha}_i)-1} + \text{ other ram.}  = 2\mu'(\tilde{\alpha}') - 2 $$
This branch point may be any point in the respective fiber of the final component of ${\mathfrak Y}^\mathrm{rel}[N+1]$, with the exception of the points of intersection with $D_N$ and $D_{N+1}$. However, the mobility of this point is, so to speak, cancelled out by the $\kk^\times$-action on the respective component of ${\mathfrak Y}^\mathrm{rel}[N+1]$ which fixes $D_N$ and $D_{N+1}$, meaning that different branch points give isomorphic maps. As a sanity check, a straightforward calculation shows that the cross-ratio of the $4$ ramification points is
$$ \left( q'_{\rho(\tilde{\alpha}')},\text{fourth ramification point};\gamma^{-1}(q_{\tilde{\alpha}_1}),\gamma^{-1}(q_{\tilde{\alpha}_2}) \right) = -\frac{\mu(\alpha_1)}{\mu(\alpha_2)}, $$
so the 4 ramification points don't "have moduli" in conformity with our wishes and contrary to the a priori expectation. This remark applies equally to Definition \ref{divergent doppelganger} if we replace $\smash{ f'|_{\gamma^{-1}(C_{\tilde{\alpha}'})} }$ with the appropriate piece of the relative stable map.
\end{rmk}

\begin{tiny}
\begin{center}
\begin{tikzpicture}[scale=1.2]

\def\aa{2}

\draw [thick, draw=black, fill=gray, opacity=0.1]
	(-7,-1.25) to [bend right] (-4,-1.25) to [bend right] (-4,1.25) to [bend right] (-7,1.25) to [bend right] cycle;

\draw [thick, draw=black, fill=gray, opacity=0.1]
	(-5,-1) to [bend left] (-5,0) to [bend right] (-5,1) -- (-2,1.5) to [bend left] (-2,0) to [bend right] (-2,-1.5) -- cycle;

\draw [thick, draw=black, fill=gray, opacity=0.1]
	(-5+\aa,-1) to [bend left] (-5+\aa,0) to [bend right] (-5+\aa,1) -- (-2+\aa,1.5) to [bend left] (-2+\aa,0) to [bend right] (-2+\aa,-1.5) -- cycle;

\draw [thick, draw=black, fill=gray, opacity=0.1]
	(-5+2*\aa,-1) to [bend left] (-5+2*\aa,0) to [bend right] (-5+2*\aa,1) -- (-2+2*\aa,1.5) to [bend left] (-2+2*\aa,0) to [bend right] (-2+2*\aa,-1.5) -- cycle;
	
\draw (-5,-1) to [bend left, very thick] (-5,0) to [bend right, very thick] (-5,1);

\draw (-5+\aa,-1) to [bend left, very thick] (-5+\aa,0) to [bend right, very thick] (-5+\aa,1);

\draw (-5+2*\aa,-1) to [bend left, very thick] (-5+2*\aa,0) to [bend right, very thick] (-5+2*\aa,1);

\draw (-5+3*\aa,-1) to [bend left, very thick] (-5+3*\aa,0) to [bend right, very thick] (-5+3*\aa,1);

\draw [blue, very thick] (-0.88,0.7) -- (1.12,0.7);

\draw plot [draw=blue, mark = *, mark size = 1] (0.3, 0.7);

\node at (-0.26,0.5) {$f'(C_{\tilde{\alpha}'})$};

\node at (0.3,-0.9) {$f'(C_{\alpha'})$};

\draw [->, thin] (0.4, 0.8) -- (0.7,0.8);
\draw [->, thin] (0.2, 0.8) -- (-0.1,0.8);  

\def\p{-0.88}
\def\h{4.5}

\draw [blue, very thick] (\p,\h) to [in = 180, out = 90] (\p+0.3,\h+0.3);
\draw [blue, very thick] (\p,\h) to [in = 90, out = -90] (\p+0.3,\h-0.3);

\draw [blue, very thick] (\p,\h-0.6) to [in = -90, out = 90] (\p+0.3,\h-0.3);
\draw [blue, very thick] (\p,\h-0.6) to [in = 180, out = -90] (\p+0.3,\h-0.9);

\draw [blue, very thick] (\p+\aa,\h-0.15) to [in = 0, out = 90] (\p+\aa-0.3,\h+0.3);
\draw [blue, very thick] (\p+\aa,\h-0.15) to [in = 0, out = -90] (\p+\aa-0.3,\h-0.9);

\draw [blue, very thick] (\p + 0.8,\h - 0.6) -- (\p + 1.2,\h - 0.6);
\draw [blue, very thick] (\p + 0.8,\h - 0.9) -- (\p + 1.2,\h - 0.9);

\draw [blue, very thick] (\p+\aa/2,\h) to [in = 180, out = 90] (\p+\aa/2+0.2,\h+0.3);
\draw [blue, very thick] (\p+\aa/2,\h) to [in = 180, out = -90] (\p+\aa/2+0.2,\h-0.3);

\draw [blue, very thick] (\p+\aa/2,\h) to [in = 0, out = 90] (\p+\aa/2-0.2,\h+0.3);
\draw [blue, very thick] (\p+\aa/2,\h) to [in = 0, out = -90] (\p+\aa/2-0.2,\h-0.3);

\draw plot [draw=blue, mark = *, mark size = 1] (\p+1, \h);

\draw [dashed, draw= black, fill=none , opacity=1] (\p+1,\h) circle [radius=0.24];

\node at (\p+1,\h+0.7) {simple ramification};
\node at (\p+1,\h+0.5) {point};

\node at (\p + 0.55,\h+0.2) {...};
\node at (\p + 0.55,\h-0.3) {...};
\node at (\p + 0.55,\h-0.8) {...};

\node at (\p + 1.45,\h+0.2) {...};
\node at (\p + 1.45,\h-0.3) {...};
\node at (\p + 1.45,\h-0.8) {...};

\draw [->, thin] (-2,2.8) to (-2,1.6);
\node at (-2.2,2.2) {$f'$};

\node at (\p - 0.5,\h) {$\gamma^{-1}(q_{\tilde{\alpha}_1})$};

\node at (\p - 0.5,\h-0.6) {$\gamma^{-1}(q_{\tilde{\alpha}_2})$};

\node at (\p +\aa+0.4,\h-0.3) {$q'_{\rho(\tilde{\alpha}')}$};

\node at (\p +\aa+0.2,\h-0.9) {$C_{\tilde{\alpha}'}$};

\node at (1.4,0) {$D_{N+1}$};

\node at (-1.3,0) {$D_N$};

\node at (-5.5,0) {$D=D_0$};

\node at (-6.7,-1) {$Y$};

\draw (-1.12,-0.7) -- (0.88,-0.7);

\def\hh{3}

\draw (\p,\hh-0.15) to [in = 180, out = 90] (\p+0.3,\hh+0.3);
\draw (\p,\hh-0.15) to [in = 180, out = -90] (\p+0.3,\hh-0.9);

\draw (\p+\aa,\hh-0.15) to [in = 0, out = 90] (\p+\aa-0.3,\hh+0.3);
\draw (\p+\aa,\hh-0.15) to [in = 0, out = -90] (\p+\aa-0.3,\hh-0.9);

\draw (\p + 0.8,\hh - 0.9) -- (\p + 1.2,\hh - 0.9);

\draw (\p + 0.8,\hh - 0.5) -- (\p + 1.2,\hh - 0.5);

\draw (\p + 0.8,\hh - 0.1) -- (\p + 1.2,\hh - 0.1);

\draw (\p + 0.8,\hh +0.3) -- (\p + 1.2,\hh +0.3);

\node at (\p + 0.55,\hh+0.2) {...};
\node at (\p + 0.55,\hh-0.3) {...};
\node at (\p + 0.55,\hh-0.8) {...};

\node at (\p + 1.45,\hh+0.2) {...};
\node at (\p + 1.45,\hh-0.3) {...};
\node at (\p + 1.45,\hh-0.8) {...};

\node at (\p - 0.43,\hh-0.3) {$\gamma^{-1}(q_{\alpha})$};

\node at (\p +\aa+0.4,\hh-0.3) {$q'_{\rho(\alpha')}$};

\node at (\p +\aa+0.2,\hh-0.9) {$C_{\alpha'}$};

\node at (-3.5,3.6) {irreducible components};
\node at (-3.5,3.4) {of the source of $f$};

\end{tikzpicture}
\bigskip

\textbf{Fig. A.} A partial graphical representation of a convergent doppelg\"{a}nger.
\end{center}
\end{tiny}

Now it is a good time to explain the way in which we will utilize the notion of doppelg\"{a}nger. Consider the general situation studied in ~\cite{[Li01], [Li02]}, i.e. a degeneration $W \to C$ such that $W_0 = Y_1 \cup Y_2$, with $Y_1$, $Y_2$ smooth and connected intersecting transversally along a smooth divisor $D$. Let $(f_1,f'_1)$ be a convergent doppelg\"{a}nger pair for $(Y_1,D)$ and $(f_2,f'_2)$ a divergent doppelg\"{a}nger pair for $(Y_2,D)$ such that the images of the distinguished marked points on $D$ match in the obvious sense. We may glue them in two different ways:
\begin{equation}\label{gluing}
f= f_1 \sqcup f_2 \text{ and } f' = f'_1 \sqcup f'_2.
\end{equation}
Then $\gamma_1^{-1} \sqcup \gamma_2$ gives an isomorphism $ \smash{ f \cong f' } $, so we can regard them as the same map. The point is that, by construction $f$ and $f'$ belong to different "virtual components" of the degeneration of the moduli spaces of stable maps, so we've exhibited a point of intersection of these two components. Such intersections will play a crucial role in our argument.

We conclude this subsection by introducing some ad hoc terminology which will hopefully streamline the exposition in the following sections. We will often encounter situations
\begin{center}
\begin{tikzpicture}
\matrix [column sep  = 10mm, row sep = 8mm] {
	\node (nw) {$\overline{\modu S}$}; &
	\node (ne) {$\overline{\modu M}$};  &
	\node (nee) {$\mathrm{Spec}(\kk)$}; \\ &
	\node (se) {$T$}; & \\
};
\draw[right hook->, thin] (nw) -- (ne);
\draw[->, thin] (ne) -- (se);
\draw[->, thin] (nw) -- (se);
\draw[->, thin] (nee) -- (ne);
\node at (-0.3,0) {$q$};
\node at (0.4,0.8) {$p$};
\end{tikzpicture}
\end{center} 
in which $\smash{ \overline{\modu M} }$ is typically an extremely ill behaved moduli space, $\overline{\modu S}$ is an irreducible component of $\overline{\modu M}$ whose dimension is equal to the "expected dimension" of $\overline{\modu M}$, $T$ is a smooth projective variety and the restriction of $q$ to $\overline{\modu S}$ is dominant. We think of $\overline{\modu S}$ as the "main component" of $\overline{\modu M}$. We say that $\smash{ \overline{\modu M} }$ has good behavior at $p$ relative to $q$ and $\overline{\modu S}$ if (1) $q$ has the expected relative dimension $\mathrm{vdim} \overline{\modu M} - \dim T$ at $p$ (which in practice always implies in particular that $\overline{\modu M}$ has the expected dimension at $p$); and (2) $\overline{\modu S}$ is the only irreducible component of $\overline{\modu M}$ containing $p$. We will occasionally use a stronger notion. We say that $\smash{ \overline{\modu M} }$ has excellent behavior at $p$ relative to $q$ if $q$ is smooth at $p$, $p$ belongs to $\smash{\overline{\modu S}}$ and $\smash{ \overline{\modu M} }$ has locally the expected dimension at $q$.

All the moduli spaces we will be using admit natural obstruction theories for which the "expected dimension," which can be formally defined at a point as $\dim (\text{tangent space}) - \dim (\text{obstruction space})$, is constant over the space and coincides with what one might guess the dimension should be based on various naive arguments. Moreover, we will use quite frequently and freely the fact that the expected dimension is a lower bound for the actual dimension of any component.

\subsection{Divergent doppelg\"{a}ngers in the plane} In this subsection, we discuss divergent doppelg\"{a}ngers in the plane and observe that they are very easy to construct. The reason is that \emph{divergent} doppelg\"{a}ngers are rather artificial and uninteresting: the expansion and the presence of the new components is a purely technical symptom, namely of the requirement that all distinguished marked points be different. Notation is as in the previous sections and subsections. \marginpar{\textcolor{white}{ \begin {tiny} bad! \end{tiny} }} We do not need any ordinary marked points, so the pair of relative stable maps will be denoted by
\begin{equation}\label{plane dop notation}
\begin{cases}
\left( C,f,(q_\alpha)_{\alpha \in R} \right) \in \overline{\modu M}({\mathfrak P}^\mathrm{rel},\Gamma|\Lambda)(\kk) \\
\left( C',f',(q'_{\alpha'} \right)_{\alpha' \in R'}) \in \overline{\modu M}({\mathfrak P}^\mathrm{rel},\Gamma'|\Lambda')(\kk).
\end{cases}
\end{equation}
From now on, we will usually refer to a relative stable map simply as $f$ or $f'$, omitting all the other data. First, we give a name to doppelg\"{a}ngers which are "as nice as possible."

\begin{defn} \label{tame divergent doppelganger}
We say that a divergent doppelg\"{a}nger pair (Definition \ref{divergent doppelganger}) denoted as in (\ref{plane dop notation}) is \emph{tame} if the following conditions are simultaneously satisfied: 

(1) The height $N$ is equal to $0$;

(2) $f'$ maps birationally onto its image, has no contracted components and the image of $f'$ is at worst nodal and the nodes occur away from $L$;

(3) $\smash{ [f] }$ belongs to $\smash{ \overline{\modu S}({\mathfrak P}^\mathrm{rel},\Gamma)(\kk) }$ and $\smash{ [f'] }$ belongs to $\smash{ \overline{\modu S}({\mathfrak P}^\mathrm{rel},\Gamma')(\kk) }$.
\end{defn}

Note that in the definition above we are not requiring the source $C'$ to be smooth. Although all the statements below remain true with this natural additional requirement, it turns out not to be particularly useful. A similar remark applies to the analogous Definition \ref{tame UC dopelganger}.

\begin{prop}\label{plane divergent doppelganger}
Assume that Theorem \ref{main theorem} is true for all degrees up to and including $d$. Let $\Gamma$ and $\smash{ \Gamma' }$ as above and $\smash{ (p_{\alpha'})_{\alpha' \in R'} \in L }$ general closed points. Then there exists a tame convergent doppelg\"{a}nger pair as in (\ref{plane dop notation}) such that $\smash{ \overline{\modu M}({\mathfrak P}^\mathrm{rel},\Gamma) }$ has good behavior at $\smash{ [f] }$ relative to ${\mathbf q}_{\Gamma}$ and $\smash{ \overline{\modu S}({\mathfrak P}^\mathrm{rel},\Gamma) }$ and $\smash{ \overline{\modu M}({\mathfrak P}^\mathrm{rel},\Gamma') }$ has good behavior at $\smash{ [f'] }$ relative to $\smash{ {\mathbf q}_{\Gamma'}}$ and $\smash{ \overline{\modu S}({\mathfrak P}^\mathrm{rel},\Gamma') }$. 
\end{prop}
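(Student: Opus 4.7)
The plan is to construct $f'$ componentwise using the inductive hypothesis, to attach the rigid ramified rational tails dictated by Definition \ref{divergent doppelganger}(3) to produce $f$, and then to verify the expected-dimension and single-component conditions at each of the two points. The principal subtlety will be the good-behavior verification at $[f]$, which lies in the boundary of $\overline{\modu M}({\mathfrak P}^\mathrm{rel},\Gamma)$.

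For the construction of $f'$, note that since $(p_{\alpha'})_{\alpha' \in R'}$ is general, each vertex $v' \in V(\Gamma')$ inherits a sub-tuple $\Lambda'_{v'} = (p_{\alpha'})_{\alpha' \in \lambda'^{-1}(v')}$ of general points on $L$. By Theorem \ref{general thm} together with the inductive hypothesis on Theorem \ref{main theorem} (applicable because $d_{\mathfrak P}(v') \leq d$), the generalized Severi variety $V_{d_{\mathfrak P}(v'),g'(v')}(\Lambda'_{v'},\mu'|_{\lambda'^{-1}(v')})$ is nonempty and irreducible; pick a nice curve $C'_{v'}$ in each. For sufficiently general $(p_{\alpha'})$, the curves $\{C'_{v'}\}$ meet transversally in ordinary nodes away from $L$, the bad locus (mutual tangencies, triple points, or nodes on $L$) being a proper closed subset of the product of Severi varieties. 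The resulting reducible nodal curve, viewed as a relative stable map with distinguished markings at the $p_{\alpha'}$, is the desired $f'$, and it lies in $\overline{\modu S}({\mathfrak P}^\mathrm{rel},\Gamma'|\Lambda')$ by construction.

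The map $f$ is obtained by attaching to $f'$ at each $q'_{\alpha'}$ a rational tail $C_{\alpha'}$ mapping into a fiber of the ruling of the final ruled component of ${\mathfrak P}^\mathrm{rel}[1]$. When $\alpha'$ has a unique $\rho$-preimage, the tail is the $\mu'(\alpha')$-fold totally ramified cover modeled on $z \mapsto z^{\mu'(\alpha')}$; when $\alpha' = \tilde{\alpha}'$, the tail is a degree $\mu'(\tilde{\alpha}')$ cover of ${\mathbb P}^1$ with ramification profile $(\mu'(\tilde{\alpha}'))$ over $p_{\tilde{\alpha}'}$ and $(\mu(\tilde{\alpha}_1),\mu(\tilde{\alpha}_2))$ at the opposite end, which exists by elementary considerations (e.g.\ a suitable rescaling of $z \mapsto z^{\mu(\tilde{\alpha}_1)}(z-1)^{\mu(\tilde{\alpha}_2)}$) and is rigid by Remark \ref{branch point}. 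We choose the fibers of the ruling carrying these tails so that the distinguished marked points $q_\alpha$ hit a prescribed general tuple of points on $D_1$. Conditions (1)--(3) of Definition \ref{divergent doppelganger} and tameness in the sense of Definition \ref{tame divergent doppelganger} then follow directly.

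It remains to verify good behavior. At $[f']$, a neighborhood in $\overline{\modu S}({\mathfrak P}^\mathrm{rel},\Gamma'|\Lambda')$ is identified with the product of generalized Severi varieties of the components of $f'$; the tangent-space calculation via the normal sheaf $\smash{ {\sh N}_\nu(-D) }$ recalled after Theorem \ref{general thm} shows that the ${\mathbf q}_{\Gamma'}$-fiber is smooth of the expected dimension, and the obstruction-theoretic lower bound on the local dimension of every component of $\overline{\modu M}({\mathfrak P}^\mathrm{rel},\Gamma')$ forces $\overline{\modu S}({\mathfrak P}^\mathrm{rel},\Gamma')$ to be the only one through $[f']$. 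At $[f]$, the crucial observation is that $[f]$ admits an explicit smoothing: move the coincident marked points $p_{\tilde{\alpha}_1}(t), p_{\tilde{\alpha}_2}(t)$ apart on $L$ in a one-parameter family of nice plane curves with tangency profile $\Gamma$, and take the limit in the relative stable maps compactification; by standard nodal degeneration the limit is precisely $[f]$, the tail $C_{\tilde{\alpha}'}$ materializing as the fiber of Li's expansion. This already gives $[f] \in \overline{\modu S}({\mathfrak P}^\mathrm{rel},\Gamma)$ and exhibits an expected-dimensional piece of $\overline{\modu S}$ through $[f]$. Combined with the rigidity of the tails (Remark \ref{branch point}), a deformation-theoretic tangent-space count at $[f]$ yields exactly the expected dimension for the ${\mathbf q}_\Gamma$-fiber, which by the obstruction-theoretic lower bound pins down every component through $[f]$ as coinciding with $\overline{\modu S}({\mathfrak P}^\mathrm{rel},\Gamma)$.

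The main technical obstacle is this last tangent-space count at the boundary point $[f]$: one must confirm that deformations of $[f]$ preserving the expansion are accounted for by Severi deformations of $f'$ together with the rigid tail moduli, and that smoothings of the attaching nodes correspond exactly to separating the marked points on $D_1$. Tameness is crucial here, as it fixes the expansion depth at $N = 0$ and precludes pathological behavior from deeper expansions.
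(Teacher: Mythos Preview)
Your construction of $f'$ and $f$ matches the paper's, and the verification at $[f']$ is the same. The difference is entirely in how you handle good behavior at $[f]$. You propose an explicit smoothing family to show $[f]\in\overline{\modu S}({\mathfrak P}^\mathrm{rel},\Gamma)$ and then a tangent-space computation at the boundary point $[f]$ to pin down the local dimension and uniqueness of the component. The paper bypasses both steps with a single stratum-dimension argument: near $[f]$, the closed locus of maps whose target is still the expansion ${\mathbb P}^2[1]$ is (via the rigid tails and Remark~\ref{branch point}) identified with a neighborhood of $[f']$ in $\overline{\modu M}({\mathfrak P}^\mathrm{rel},\Gamma')$, hence has dimension $\mathrm{vdim}\,\overline{\modu M}({\mathfrak P}^\mathrm{rel},\Gamma') = \mathrm{vdim}\,\overline{\modu M}({\mathfrak P}^\mathrm{rel},\Gamma)-1$. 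Since every irreducible component of $\overline{\modu M}({\mathfrak P}^\mathrm{rel},\Gamma)$ through $[f]$ has dimension at least $\mathrm{vdim}\,\overline{\modu M}({\mathfrak P}^\mathrm{rel},\Gamma)$, its generic point must map into ${\mathbb P}^2[0]$ and is therefore a nice curve by \cite[Proposition~2.2]{[CH98]}; this forces the component to be $\overline{\modu S}({\mathfrak P}^\mathrm{rel},\Gamma)$ and delivers membership, the single-component property, and the expected relative dimension all at once.

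Your route is valid, but the step you flag as the ``main technical obstacle'' (the deformation theory of relative stable maps at an expanded target) is precisely what the paper's argument is designed to avoid. A small slip: you cannot independently choose the fibers of the ruling so that the $q_\alpha$ hit a prescribed tuple on $D_1$; the fiber through $p_{\alpha'}$ meets $D_1$ at a single determined point, so the images of $q_\alpha$ on $D_1$ are forced (and in particular $q_{\tilde\alpha_1},q_{\tilde\alpha_2}$ land at the same point there). This does not affect the argument, but it is worth noting.
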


\begin{proof}
This is quite straightforward. We choose $\smash{[f'] }$ to be general in $\smash{ \overline{\modu S}({\mathfrak P}^\mathrm{rel},\Gamma')(\kk) }$. The excellent behavior at $\smash{ [f'] }$ relative to $\smash{ {\mathbf q}_{\Gamma'} }$ follows from ~\cite[Proposition 2.2]{[CH98]}  and some straightforward deformation theory. The construction of $\smash{[f] }$ is forced by the third requirement in Definition \ref{divergent doppelganger}. Consider a sufficiently small \'{e}tale neighborhood of $\smash{ [f] }$ inside $\smash{ \overline{\modu M}({\mathfrak P}^\mathrm{rel},\Gamma) }$. By the calculation for $\Gamma'$, the closed subscheme of curves in the family which still map into the expansion ${\mathbb P}^2[1]$ has dimension
$$ \dim_{[f']} \overline{\modu M}({\mathfrak P}^\mathrm{rel},\Gamma') =  \mathrm{vdim}\mathop{ } \overline{\modu M}({\mathfrak P}^\mathrm{rel},\Gamma') = \mathrm{vdim}\mathop{ } \overline{\modu M}({\mathfrak P}^\mathrm{rel},\Gamma) - 1, $$
so generically the maps in the neighborhood will map into ${\mathbb P}^2[0]$. Clearly, they will be nice (Definition \ref{nice curve}). The fact that $\smash{ \overline{\modu M}({\mathfrak P}^\mathrm{rel},\Gamma) }$ has good behavior at $\smash{ [f] }$ relative to ${\mathbf q}_{\Gamma}$ and $\smash{ \overline{\modu S}({\mathfrak P}^\mathrm{rel},\Gamma) }$ follows. 
\end{proof}

\subsection{Convergent doppelg\"{a}ngers in the plane} Unlike divergent doppelg\"{a}ngers which are purely technical, convergent doppelg\"{a}ngers are somewhat subtle due to the change of arithmetic genus. The analogue of Proposition \ref{plane divergent doppelganger} in the convergent case is substantially more involved. Note that if Theorem \ref{main theorem} is true for all degrees up to and including $d$, then $\smash{ \overline{\modu S}({\mathfrak P}^\mathrm{rel},\Gamma) }$ is irreducible. This is automatic from the definition of $\smash{ \overline{\modu S}({\mathfrak P}^\mathrm{rel},\Gamma) }$.

Assume that we're in the situation $|V(\Gamma')| = |V(\Gamma)|$ corresponding to (\ref{genus convergent connected}). In this case, we impose the condition
\begin{equation} \label{UC condition}
\begin{aligned}
g(v) \leq {d(v)-1 \choose 2} - \min \{ \mu(\tilde{\alpha}_1), \mu(\tilde{\alpha}_2) \},
\end{aligned}
\end{equation}
where, of course, $\smash{ v = \lambda(\tilde{\alpha}_1) = \lambda(\tilde{\alpha}_2) }$. However, in the situation $|V(\Gamma')| = |V(\Gamma)| - 1$ corresponding to (\ref{genus convergent disconnected}), there is no additional requirement. For reassurance, note that the condition
$\smash{ g(v_1) + g(v_2) - 1 \leq {d(v_1)+d(v_2)-1 \choose 2} - \min \{ \mu(\tilde{\alpha}_1), \mu(\tilde{\alpha}_2) \} }$
analogous to (\ref{UC condition}) is actually automatically satisfied in this case.

\begin{defn} \label{tame UC dopelganger}
We say that a convergent doppelg\"anger pair (Definition \ref{convergent doppelganger}) denoted as in (\ref{plane dop notation}) is \emph{tame} if the following conditions are satisfied: 

(1) the height $N$ is equal to $0$;

(2) $f$ maps birationally onto its image, has no contracted components and with the possible exception of $\smash{ p_{\tilde{\alpha}_1} } = \smash{ p_{\tilde{\alpha}_2} }$, the image of $f$ is at worst nodal and the nodes occur away from $L$, while the images of the two branches at $\smash{ p_{\tilde{\alpha}_1} } = \smash{ p_{\tilde{\alpha}_2} }$ are smooth;

(3) $\smash{ [f] }$ belongs to $\smash{ \overline{\modu S}({\mathfrak P}^\mathrm{rel},\Gamma)(\kk) }$ and $\smash{ [f'] }$ belongs to $\smash{ \overline{\modu S}({\mathfrak P}^\mathrm{rel},\Gamma')(\kk) }$.

\end{defn}

For instance, if $|V(\Gamma)| = |V(\Gamma')|=1$, $d=3$, $g=1$ and $\mu \equiv 1$, there are no tame convergent doppelg\"anger pairs. The reason is that (\ref{UC condition}) is violated. Indeed, if there were, it is not hard to see that the node of the image at $\smash{ p_{\tilde{\alpha}_1} } = \smash{ p_{\tilde{\alpha}_2} }$ would need to correspond to a node of the source as well, which contradicts the requirement that the distinguished marked points are different.

\begin{prop}\label{UC dopelganger}
Assume that Theorem \ref{main theorem} is true for all degrees up to and including $d$. Let $\Gamma$ and $\smash{ \Gamma' }$ as above and $\smash{ (p'_{\alpha'})_{\alpha' \in R'} \in L }$ general closed points.  Then there exists a tame convergent doppelg\"{a}nger pair denoted as in (\ref{plane dop notation}) such that $\smash{ \overline{\modu M}({\mathfrak P}^\mathrm{rel},\Gamma) }$ has good behavior at $\smash{ [f] }$ relative to ${\mathbf q}_{\Gamma}$ and $\smash{ \overline{\modu M}({\mathfrak P}^\mathrm{rel},\Gamma) }$ and $\smash{ \overline{\modu M}({\mathfrak P}^\mathrm{rel},\Gamma') }$ has good behavior at $\smash{ [f'] }$ relative to ${\mathbf q}_{\Gamma'}$ and $\smash{ \overline{\modu S}({\mathfrak P}^\mathrm{rel},\Gamma') }$.
\end{prop}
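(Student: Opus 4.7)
The plan is to build $f$ first through a degeneration--smoothing construction using the inductive hypothesis, then obtain $f'$ as an essentially forced bubble at the collision point, and finally verify the two good-behavior conditions by a tangent-space computation together with a dimension count.

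For the construction of $f$, I would work vertex-by-vertex. The aim is a (possibly reducible, in the disconnected case) plane curve of degree $d$ and geometric genus $g$, meeting $L$ with the prescribed multiplicity profile and carrying, at $p_{\tilde{\alpha}'}$, two smooth branches tangent to $L$ of orders $\mu(\tilde{\alpha}_1)$ and $\mu(\tilde{\alpha}_2)$. The $\delta$-invariant of this two-branch singularity is $\min\{\mu(\tilde{\alpha}_1),\mu(\tilde{\alpha}_2)\}$, and condition (\ref{UC condition}) is precisely the inequality that allows such a singularity to coexist with geometric genus $g$ on a degree $d$ plane curve; in the disconnected case the corresponding inequality is automatic. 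Following the strategy alluded to in the paragraph preceding Theorem \ref{general thm}, I would begin with a reducible curve whose components are of the Example \ref{hs algebra} type, with the lower-degree pieces provided by the inductive hypothesis, arranged so that one component realizes the tangency $\mu(\tilde{\alpha}_1)$ at $p_{\tilde{\alpha}'}$ and another realizes $\mu(\tilde{\alpha}_2)$. Smoothing all the nodes away from $p_{\tilde{\alpha}'}$ is unobstructed by the usual $H^1$-vanishing on $\mathcal N_\nu(-D_f)$ and preserves the smoothness of the two branches at $p_{\tilde{\alpha}'}$ because the smoothing is local and disjoint from this point. This exhibits $f$ as a limit of nice curves, so $[f]\in \overline{\modu S}({\mathfrak P}^{\mathrm{rel}},\Gamma)$.

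Good behavior of $\overline{\modu M}({\mathfrak P}^{\mathrm{rel}},\Gamma)$ at $[f]$ consists of two parts. Condition (1), the expected relative dimension of $\mathbf q_\Gamma$, reduces to the equality $\dim H^0(\tilde C,\mathcal N_\nu(-D_f)) = 2d+g-1$ (or its disconnected analogue), which follows by Riemann--Roch and tameness exactly as in the discussion following (\ref{stuff2.2}). Condition (2), uniqueness of the component through $[f]$, is more delicate. Any competing component would have its generic element either living in a strictly higher expansion (ruled out by a virtual-dimension count against the expected dimension of $\overline{\modu M}({\mathfrak P}^{\mathrm{rel}},\Gamma)$) or carrying a contracted ghost component; the latter can be excluded because smoothing such a ghost produces first-order deformations already accounted for by the tangent space computed in part (1), leaving no room for a transverse extra component at $[f]$.

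Then $f'$ is constructed directly from $f$: attach a rational bridge $C_{\tilde{\alpha}'}\cong \mathbb P^1$ to $C$ at $q_{\tilde{\alpha}_1}$ and $q_{\tilde{\alpha}_2}$, and extend $f$ by mapping $C_{\tilde{\alpha}'}$ as the degree $\mu'(\tilde{\alpha}')$ cover of the appropriate ruling fiber of the $\mathbb F_1$-end of ${\mathbb P}^2[1]$, with ramification orders $\mu(\tilde{\alpha}_1),\mu(\tilde{\alpha}_2)$ at the attachment points, full ramification at the new distinguished marked point $q'_{\tilde{\alpha}'}$, and a single extra simple branch point; existence and essential uniqueness of this cover are exactly the content of Remark \ref{branch point}. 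To see $[f']\in\overline{\modu S}({\mathfrak P}^{\mathrm{rel}},\Gamma')$, reverse a standard degeneration: flatten the expansion back to ${\mathbb P}^2[0]$ while smoothing the two new attachment nodes, exhibiting $f'$ as a limit of nice curves. Good behavior at $[f']$ then follows by the same tangent-space-plus-dimension-count argument as at $[f]$, carried out one expansion level up. The hardest step I anticipate is the uniqueness part of good behavior at $[f]$: the configuration of two branches meeting at a single point of $L$ is precisely the sort of degeneration at which extra components of the moduli of relative stable maps are prone to accumulate, so ruling them out will likely require the key technical observation alluded to in the introduction (going back to the versal deformation of a node).
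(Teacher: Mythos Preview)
Your overall architecture matches the paper's: build $f$ from a union of rational pieces of Example~\ref{hs algebra} type, one per contact point, then obtain $f'$ by attaching the forced rational bridge over the fiber in ${\mathbb P}^2[1]$. However, your construction of $f$ has a genuine gap in the genus control. You write ``smoothing all the nodes away from $p_{\tilde\alpha'}$,'' but if you glue (or smooth) at \emph{all} the nodes of $\bigcup_\alpha Y_\alpha$ away from $L$, the resulting arithmetic genus is $\binom{d-1}{2}-\min\{\mu(\tilde\alpha_1),\mu(\tilde\alpha_2)\}$, not $g$. The paper's crucial move is to put a multigraph structure on $R$ with one edge per node of $\bigcup Y_\alpha$ off $L$, compute via B\'ezout that $\mathrm{rank}\,H^1(R,\mathbb Z)=\binom{d-1}{2}-\min\{\mu(\tilde\alpha_1),\mu(\tilde\alpha_2)\}\ge g$ (this is exactly where (\ref{UC condition}) enters), and then choose a connected \emph{subgraph} $G$ with $h^1(G)=g$; the source $C$ is glued only at the nodes indexed by $E(G)$. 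Without this step you do not produce a map of the correct arithmetic genus, and your appeal to (\ref{UC condition}) via the $\delta$-invariant, while morally correct, is not what actually drives the construction.

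You also misidentify the hard step. Good behavior at $[f]$ is in fact the easy part: once the source is glued correctly, one checks directly that $H^1(C,\mathcal N_f(-D))=0$ (each $\mathcal N_{f_\alpha}(N_\alpha-\mu(\alpha)q_\alpha)$ has degree $\ge -1$ on a $\mathbb P^1$) and that the restriction-to-nodes map is surjective; unobstructedness then gives smoothness of $\overline{\modu M}({\mathfrak P}^{\mathrm{rel}},\Gamma|\Lambda)$ at $[f]$ of the expected dimension, so no competing component can pass through. The genuinely delicate point is showing $[f']\in\overline{\modu S}({\mathfrak P}^{\mathrm{rel}},\Gamma')$. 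Your ``flatten the expansion and smooth the two attachment nodes'' is the right instinct, but the paper makes this precise in two stages: first a pure dimension argument (Claim~\ref{lazy dimension argument}) shows that the $3$-piece subcurve $C_{\tilde\alpha_1}\cup C_{\tilde\alpha_2}\cup C_{\tilde\alpha'}$ alone must deform to a nice genus~$0$ curve in ${\mathbb P}^2[0]$ (the locus of maps still targeting ${\mathbb P}^2[1]$ has dimension one less than expected), and then one re-runs the first half of the proof with this deformed piece replacing the original two to land in $\overline{\modu S}({\mathfrak P}^{\mathrm{rel}},\Gamma')$. Good behavior at $[f']$ then follows by the same logic: the sublocus mapping to ${\mathbb P}^2[1]$ is too small, so any component through $[f']$ generically targets ${\mathbb P}^2[0]$ and hence is $\overline{\modu S}({\mathfrak P}^{\mathrm{rel}},\Gamma')$ by \cite[Proposition 2.2]{[CH98]}.
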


\begin{proof} The proof is deformation-theoretic and inspired by Severi's proof of the non-emptiness of the Severi varieties ~\cite{[Se21]}. For notational convenience, we let $\smash{ p_\alpha = p'_{\rho(\alpha)} }$, for all $\alpha \in R$. Recall that there are two situations, corresponding to (\ref{genus convergent connected}) and (\ref{genus convergent disconnected}) respectively. Although they look notationally different, they are ultimately the same, if in (\ref{genus convergent disconnected}) we treat the disjoint union of the two connected components incident to $\tilde{\alpha}_1$ and $\tilde{\alpha}_2$ as a single curve. Thus we will only deal with the case (\ref{genus convergent connected}) in this proof since the other one is analogous. Moreover, note that it trivially suffices to prove the statement in the special case $|V(\Gamma)| = 1$.

For each $\alpha \in R$, let $Y_\alpha$ be a general irreducible degree $\mu(\alpha)$ rational curve in ${\mathbb P}^2$ whose intersection with $L$ is scheme-theoretically $\mu(\alpha)p_\alpha$. By ~\cite[Proposition 2.2]{[CH98]} and Example \ref{hs algebra}, (1) such curves exist and are at worst nodal; (2) the nodes occur away from $L$; and (3) they only intersect transversally at smooth points away from $L$, with the sole exception of $Y_{\tilde{\alpha}_1}$ and $Y_{\tilde{\alpha}_2}$, which both contain $p_{\tilde{\alpha}_1} = p_{\tilde{\alpha}_2} \in L$.

Let $C_\alpha \cong {\mathbb P}^1$ be the normalization of $Y_\alpha$ and $f_\alpha:C_\alpha \to {\mathbb P}^2$ the normalization map. Define a multigraph structure on $R$ by adding an edge for each node of $\smash{ \bigcup_{\alpha \in R} Y_\alpha }$ away from $L$. Nodes of the irreducible curves $Y_\alpha$ correspond to loops in $R$. Condition (\ref{UC condition}) rules out the trivial case $|R| = 2$, $d=2$, $\mu \equiv 1$, so $R$ is always connected. Note that
\begin{equation*}
\begin{aligned}
|E(R)| + (Y_{\tilde{\alpha}_1} \cdot Y_{\tilde{\alpha}_2})_{p_{\tilde{\alpha}_1}} &= \sum_{ \{\alpha_1,\alpha_2\} \in {R \choose 2} } (Y_{\alpha_1} \cdot Y_{\alpha_2}) + \sum_{\alpha \in R} \text{no. of loops at } \alpha \\
&=  \sum_{ \{\alpha_1,\alpha_2\} \in {R \choose 2} } \mu(\alpha_1) \mu(\alpha_2) + \sum_{\alpha \in R} {\mu(\alpha) -1 \choose 2} \\ &= {d-1 \choose 2} + |R| - 1,
\end{aligned}
\end{equation*}
which we may rearrange as
$$ \mathrm{rank} \mathop{} \mathrm{H}^1(R,\zz)  = {d-1 \choose 2} - \min \{ \mu(\tilde{\alpha}_1), \mu(\tilde{\alpha}_2) \} \geq g, $$
by assumption (\ref{UC condition}). Therefore, there exists a subgraph $G$ which is connected and has the property that $\mathrm{rank} \mathop{} \mathrm{H}^1(G,\zz) = g$. We define $C$ by gluing transversally the $C_\alpha$ at the pairs of preimages of nodes of $\smash{ \bigcup_{\alpha \in R} Y_\alpha }$ corresponding to edges of the subgraph $G$. Then $C$ is semistable of arithmetic genus $g$ because all irreducible components are rational. The nodes $r_e$ of $C$ correspond to edges $e \in E(G)$. The map $f$ is defined in the obvious way by gluing the normalization maps $f_\alpha$. Finally, we set the distinguished marked points $q_\alpha = f_\alpha^{-1}(p_\alpha) \in C_\alpha \subset C$.

We claim that the relative stable map $\left( C,f,(q_\alpha)_{\alpha \in R} \right) \in \overline{\modu M}({\mathfrak P}^\mathrm{rel},\Gamma|\Lambda)(\kk)$ we've constructed actually lies in $\smash{ \overline{\modu S}({\mathfrak P}^\mathrm{rel},\Gamma|\Lambda)(\kk) }$ (\ref{defrel}), as desired. The space of first order deformations of the relative stable map keeping the contact points with $L$ fixed can be identified with
\begin{equation}\label{1st order defs}
{\mathrm T}_{[f]} \overline{\modu M}({\mathfrak P}^\mathrm{rel},\Gamma|\Lambda) \cong \mathrm{H}^0\left(C,{\sh N}_f\left(- D \right ) \right),
\end{equation}
where $\smash{ D = \sum_{\alpha \in R} \mu(\alpha) q_\alpha }$ and ${\sh N}_f$ is an invertible sheaf on $C$ characterized by the following property: the restriction of ${\sh N}_f$ to each component $C_\alpha$ of $C$ is
\begin{equation} \label{ElemMod}
{\sh N}_f|_{C_\alpha} \cong {\sh N}_{f_\alpha} \left( N_\alpha \right), 
\end{equation}
where $N_\alpha=  \sum_{e \in E_\alpha(G)} r_e \in \mathrm{Div}(C_\alpha)$ and $E_\alpha(G)$ denotes the set of edges of $G$ incident to the vertex labeled $\alpha$. Note that in view of (\ref{ElemMod}), if $e$ is an edge between vertices indexed $\alpha_1$ and $\alpha_2$, we have
$$ {\sh N}_f \otimes {\kk}_{r_e} \cong {\sh N}_{f_{\alpha_1}} (r_e) \otimes {\kk}_{r_e}  \cong {\sh O}_{C_{\alpha_1}}(r_e) \otimes {\sh N}_{f_{\alpha_1}} \otimes {\kk}_{r_e} \cong {\mathrm T}_{r_e} C_{\alpha_1} \otimes {\mathrm T}_{r_e} C_{\alpha_2}, $$
which is canonically identified with the versal deformation space of the node $r_e$. Therefore, there is a natural map
\begin{equation*}
\nu : {\mathrm T}_{\left[ f \right]}\overline{\modu M}({\mathfrak P}^\mathrm{rel},\Gamma|\Lambda) \longrightarrow \bigoplus_{e \in E(G)} \mathrm{Def}^1(r_e). 
\end{equation*}
The obstruction space can be identified with ${\mathrm H}^1$ of the same sheaf (\ref{1st order defs}). 
 
We claim that the twist of $\smash{ {\sh N}_f }$ governing the deformations and obstructions of the map has vanishing $\mathrm{H}^1$ and thus ${\mathrm H}^0$ of expected dimension and  that the map $\nu$ defined above is surjective. This implies an analogous statement for deformations of $f$ as an element of $\smash{ \overline{\modu M}({\mathfrak P}^\mathrm{rel},\Gamma) }$ because $\smash{\overline{\modu M}({\mathfrak P}^\mathrm{rel},\Gamma|\Lambda)}$ is obtained by taking the vanishing of sections of $|R|$ line bundles and moreover, it also implies that we may find such deformations matching any tangent vectors at the contact points. Alternatively, we could simply repeat the argument below with the twist of $\smash{ {\sh N}_f }$ as in (\ref{stuff2.2}) and the argument goes through equally well. Regardless, once this is done, we simply observe that the general member of a one-dimensional family of relative stable maps in $\smash{ \overline{\modu M}({\mathfrak P}^\mathrm{rel},\Gamma) }$ with general tangent vector at $[f]$ indeed corresponds to a nice curve (Definition \ref{nice curve})\marginpar{\textcolor{white}{ \begin{tiny} careful ... \end{tiny} }} and therefore $[f]$ indeed belongs to $\smash{ \overline{\modu S}({\mathfrak P}^\mathrm{rel},\Gamma|\Lambda) }$, as desired. We have a short exact sequence
\begin{equation}\label{ses1}
0 \longrightarrow {\sh N}_f(-D) \longrightarrow \bigoplus_{\alpha \in R}  {\sh N}_{f_\alpha}(N_\alpha -\mu(\alpha)q_\alpha) \longrightarrow \bigoplus_{e \in E(G)} \mathrm{Def}^1(r_e) \otimes \kk_{r_e} \longrightarrow 0,
\end{equation}
where the middle terms are implicitly pushed forward to $C$, so the space of first order deformations and the obstruction space are respectively the kernel and cokernel of the map
$$ \bigoplus_{\alpha \in R} \mathrm{H}^0 \left(C_\alpha, {\sh N}_{f_\alpha}(N_\alpha -\mu(\alpha)q_\alpha) \right) \longrightarrow \bigoplus_{e \in E(G)} \mathrm{Def}^1(r_e). $$
By construction, $\deg {\sh N}_{f_\alpha}(N_\alpha -\mu(\alpha)q_\alpha) = 2\mu(\alpha)-2+|E_\alpha(G)|$, so the first cohomology of the middle terms in (\ref{ses1}) vanishes trivially indeed. Of course, this map is simply the composition of 
$$ \bigoplus_{\alpha \in R} \mathrm{H}^0 \left(C_\alpha, {\sh N}_{f_\alpha}(N_\alpha -\mu(\alpha)q_\alpha) \right) \stackrel{\oplus \eta_\alpha}{\longrightarrow} \bigoplus_{\alpha \in R} \bigoplus_{e \in E_\alpha(G)} \mathrm{Def}^1(r_e) = \bigoplus_{e \in E(G)} \mathrm{Def}^1(r_e)^{\oplus 2} $$
with the componentwise difference. Note that the kernel of $\eta_\alpha$ has dimension $h^0({\sh N}_{f_\alpha}( -\mu(\alpha)q_\alpha)) = 2\mu(\alpha) - 1$, so $\oplus \eta_\alpha$ is surjective. This implies all three claims at the beginning of this paragraph at once; formally, we consider the diagram
\begin{flushleft}
\begin{tikzpicture}
\matrix [column sep  = 6mm, row sep = 6mm] {
%	& & & \node (nne) {$0$}; & \\
	\node (nww) {$0$}; &
	\node (nw) {$\mathrm{H}^0({\sh N}_f(-\Sigma) )$}; &
	\node (nc) {$\displaystyle{ \bigoplus \mathrm{H}^0 \left(C_\alpha, {\sh N}_{f_\alpha}(N_\alpha -\mu(\alpha)q_\alpha) \right) }$}; &
	\node (ne) {$\displaystyle{ \bigoplus \mathrm{Def}^1(r_e) }$};  &
	\node (nee) {$0$}; \\
	\node (sww) {$0$}; &
	\node (sw) {$\displaystyle{ \bigoplus \mathrm{Def}^1(r_e) }$}; &
	\node (sc) {$\displaystyle{ \bigoplus \mathrm{Def}^1(r_e)^{\oplus 2} }$}; &
	\node (se) {$\displaystyle{ \bigoplus \mathrm{Def}^1(r_e) }$}; &
	\node (see) {$0$}; \\
%	& & \node (ssc) {$0$}; & & \\
};

%\draw[->, thin] (nne) -- (ne);
%\draw[->, thin] (sc) -- (ssc);

\draw[->, thin] (nww) -- (nw);
\draw[->, thin] (nw) -- (nc);
\draw[->, thin] (nc) -- (ne);
\draw[->, thin] (ne) -- (nee);

\draw[->, thin] (sww) -- (sw);
\draw[->, thin] (sw) -- (sc);
\draw[->, thin] (sc) -- (se);
\draw[->, thin] (se) -- (see);

\draw[->, thin] (nw) -- (sw);
\draw[->, thin] (nc) -- (sc);
\draw[double equal sign distance, thin] (ne) -- (se);

\node at (0.5,0) {$\oplus \eta_\alpha$};
\node at (-4,0) {$\nu$};

\node at (-2.3,-0.4) {$\mathbf{1} \oplus \mathbf{1}$};

\end{tikzpicture}
\end{flushleft} 
Exactness of the top row is ensured by the surjectivity of $\oplus \eta_\alpha$. Applying the snake lemma, we conclude that the left vertical map is surjective, as desired. Since the zeroth cohomology group has the expected dimension, the fact that $\smash{ \overline{\modu M}({\mathfrak P}^\mathrm{rel},\Gamma) }$ has good behavior at $\smash{ \left( C,f,(q_\alpha)_{\alpha \in R} \right) }$ relative to ${\mathbf q}_{\Gamma}$ and $\smash{ \overline{\modu S}({\mathfrak P}^\mathrm{rel},\Gamma) }$ follows immediately.

Now we have to construct the corresponding $f'$ and prove that it has the desired property. The construction of $f'$ is forced by property (3) in the statement of Definition \ref{convergent doppelganger} and Remark \ref{branch point}. To prove that it is indeed a limit of nice curves (Definition \ref{nice curve}), we argue in two steps. 

First, we analyze the $3$-piece component $C'_0 = \gamma^{-1}(C_{\tilde{\alpha}_1} \cup C_{\tilde{\alpha}_2} \cup C_{\tilde{\alpha}'})$ and the restriction $f'_0$ of $f'$ to $C'_0$. We will temporarily abuse notation by suppressing the isomorphism $\gamma$. The topological data $\Gamma'_0$ corresponding to $f'_0$ has genus $0$, degree $\mu(\tilde{\alpha}_1) + \mu(\tilde{\alpha}_2)$ and one root (distinguished marked point) of multiplicity $\mu(\tilde{\alpha}_1) + \mu(\tilde{\alpha}_2)$. Let $\smash{ \overline{\modu M}({\mathfrak P}^\mathrm{rel},\Gamma'_0|p_{\tilde{\alpha}}) }$ be the corresponding space of genus zero relative stable maps with the contact point fixed at $p_{\tilde{\alpha}}$. 

\begin{claim}\label{lazy dimension argument}
The stable map $(C'_0,f'_0,q'_{\tilde{\alpha}'})$ lives inside $\smash{ \overline{\modu S}({\mathfrak P}^\mathrm{rel},\Gamma'_0|p_{\tilde{\alpha}} ) }$.
\end{claim}

\begin{proof}
The local dimension of $\smash{ \overline{\modu M}({\mathfrak P}^\mathrm{rel},\Gamma'_0|p_{\tilde{\alpha}} ) }$ at $(C'_0,f'_0,q'_{\tilde{\alpha}'})$ is at least the expected dimension $2\mu(\tilde{\alpha}_1) + 2\mu(\tilde{\alpha}_2) - 1$. One way to ascertain the truth of this statement goes as follows: the local dimension of $\smash{ \overline{\modu M}({\mathfrak P}^\mathrm{rel},\Gamma'_0) }$ at $(C'_0,f'_0,q'_{\tilde{\alpha}'})$ is at least the expected dimension $2\mu(\tilde{\alpha}_1) + 2\mu(\tilde{\alpha}_2)$, due to the existence of an obstruction theory of the respective expected dimension on the space of relative stable maps ~\cite{[Li02]} and $\smash{ \overline{\modu M}({\mathfrak P}^\mathrm{rel},\Gamma'_0|p_{\tilde{\alpha}} ) }$ is the pullback of the divisor $\smash{ p'_{\tilde{\alpha}'} \in L }$ under the evaluation map at the distinguished marked point, so the dimension can decrease by at most $1$. \'{E}tale locally near $\smash{ (C'_0,f'_0,q'_{\tilde{\alpha}'}) }$, the relative stable maps can only be of two types: either (a) they have the same dual graph and distribution of degrees of components as $f'_0$, mapping into ${\mathbb P}^2[1]$, or (b) they have irreducible source of degree $\smash{ \mu(\tilde{\alpha}_1) + \mu(\tilde{\alpha}_2) }$, mapping into ${\mathbb P}^2[0] = {\mathbb P}^2$. Invoking again ~\cite[Proposition 2.2]{[CH98]} and Remark \ref{branch point}, we see that the ones of the former type only have $\smash{ 2\mu(\tilde{\alpha}_1) + 2\mu(\tilde{\alpha}_2) - 2 }$ moduli, so there must exist deformations of the latter type too. \end{proof}

Now we deform the $3$-piece component separately, while keeping the other components of $f'$ mapping to ${\mathbb P}^2[0]$ fixed. Applying Claim \ref{lazy dimension argument}, we obtain a family of relative stable maps $\smash{ f'_{0,B} }$ over a pointed curve $(B,b)$
\begin{center}
\begin{tikzpicture}
\matrix [column sep  = 15mm, row sep = 6mm] {
	\node (nw) {${\sh C}_0$}; &
	\node (ne) {$ \mathrm{Bl}_{(t) \times L} \left( \mathrm{Spec}({\kk}[t]) \times {\mathbb P}^2 \right) $};  \\
	\node (sw) {$B$}; &
	\node (se) {$\mathrm{Spec}({\kk}[t])$}; \\
};
\draw[->, thin] (nw) -- (ne);
\draw[->, thin] (sw) -- (se);
\draw[->, thin] (nw) -- (sw);
\draw[->, thin] (ne) -- (se);

%\node at (0,1.3) {$f'_{0,B}$};
\node at (-3.1,0) {$\psi$};
%\node at (1.1,0) {$f'$};
\end{tikzpicture}
\end{center}  
(we refer the reader to ~\cite{[Li01]} for the general theory of families of relative stable maps) such that the central degenerate fiber $(f'_{0,B})_b$ is isomorphic to $f'_0$ and all other fibers $(f'_{0,B})_x$ correspond to nice curves relative to the profile $\Gamma'_0$. (We are suppressing the data of the source and the distinguished marked point from the notation.) We can trivially construct a family 
$$ f'_B:{\sh C} \longrightarrow B \times_{\mathrm{Spec}({\kk}[t])} \mathrm{Bl}_{(t) \times L} \left( \mathrm{Spec}({\kk}[t]) \times {\mathbb P}^2 \right) $$
of relative stable maps in $\smash{ \overline{\modu M}({\mathfrak P}^\mathrm{rel},\Gamma'|\Lambda') }$, the source of $(f'_B)_x$ being $\psi^{-1}(x) \cup \bigcup_{\alpha \in R \backslash \{\tilde{\alpha}_1,\tilde{\alpha}_2\}} C_\alpha$ glued appropriately, $x \neq b$, in such a way that the restriction of $\smash{ f'_{B\backslash \{b\} } }$ to components which don't specialize to components of $C'_0$ is constant over $B \backslash \{b\}$ and the $b$-fiber of $\smash{ f'_B}$ is isomorphic to $\smash{ f'}$. Of course, the nodes of the source of $(f'_B)_x$ which also lie on the moving component $\smash{ \psi^{-1}(x) }$ will vary, but this is unimportant. We may have to shrink $B$ to ensure the intersections remain nodal. Then, simply by repeating the argument in the first half of this proof, we can prove that all maps $(f'_B)_x$ belong to $\smash{ \overline{\modu S}({\mathfrak P}^\mathrm{rel},\Gamma') }$ and therefore, so does $[f']$. Tameness (Definition \ref{tame UC dopelganger}) follows by construction.

We need to check that $\smash{ \overline{\modu M}({\mathfrak P}^\mathrm{rel},\Gamma') }$ has good behavior at $\smash{ [f'] }$ relative to ${\mathbf q}'_{\Gamma}$ and $\smash{ \overline{\modu S}({\mathfrak P}^\mathrm{rel},\Gamma') }$. The main idea is that, by the standard dimension calculations, the local dimension at $[f']$ of the closed substack where the target of the relative stable map is ${\mathbb P}^2[1]$ rather than simply ${\mathbb P}^2$ is strictly smaller than $\smash{ \mathrm{vdim} \overline{\modu M}({\mathfrak P}^\mathrm{rel},\Gamma') }$. Hence for any irreducible component of $\smash{ \overline{\modu M}({\mathfrak P}^\mathrm{rel},\Gamma') }$ containing $[f']$, the generic elements map into ${\mathbb P}^2[0]$ and they are actually nice in the sense of Definition \ref{nice curve} relative to the data $\Gamma'$ by \cite[Proposition 2.2]{[CH98]}. We conclude that $\smash{ \overline{\modu S}({\mathfrak P}^\mathrm{rel},\Gamma') }$ is indeed the only irreducible component containing $[f']$. The fact that the local relative dimension of the evaluation map is the expected one is also straightforward. \end{proof}

\subsection{Some elementary doppelg\"{a}ngers in the blown-up plane} We will need analogous statements which we will apply to relative stable maps into the exceptional divisor of the blowup ${\mathbb F}_1$. However, because we will only have to deal with such relative stable maps whose class has intersection number $1$ with the fibers of the ruling of ${\mathbb F}_1$, this is quite straightforward. Let us introduce notation first. Let $L \subset {\mathbb F}_1$ be the $(-1)$-curve and $L_0$ a line disjoint from $L$, i.e. a curve which blows down to a line in the projective plane under the contraction of $L$ and doesn't pass through the point where $L$ is contracted. Clearly, $L_0 \sim L + \text{[fiber]}$ as divisors on ${\mathbb F}_1$. As a completely elementary preliminary observation, note that a divisor whose class has intersection number $1$ with the fibers of the ruling is smooth if and only if no points of intersection with $L_0$ "line up" with points of intersection with $L$ under the obvious isomorphism $L_0 \cong L$. Moreover, if this is the case and we fix the intersection divisors with $L$ and $L_0$, then the space of curves with the given intersections with $L$ and $L_0$ is naturally (a torsor for) $\kk^\times$. 

First, we introduce general notation for dealing with the type of stable maps into ${\mathbb F}_1$ we will be interested in. Fix points $\xi_1,\xi_2, ..., \xi_n \in L_0$ with multiplicities $m_1,m_2,...,m_n$ adding up to $d$. Let $R = R_0 \sqcup R_\kappa$ be an abstract indexing set and a bijective function $\iota: R_0 \to \{1,2,...,|R_0|\}$. Consider closed points $\smash{ (p_{\alpha})_{\alpha \in R} }$ such that $p_{\alpha} \mapsto \xi_{\iota(\alpha)}$ under the natural isomorphism $L \cong L_0$, for all $\alpha \in R_0$. We assign multiplicities $\mu:R \to \nn^*$ such that $\sum_{\alpha \in R} \mu(\alpha) = d-1$ and $\mu(\alpha) = m_{\iota(\alpha)}$, for all $\alpha \in R_0$. 

Let $\Gamma$ contain the following topological data: vertices $V(\Gamma)$ such that the function $\lambda:R \to V(\Gamma)$ whose graph specifies the incidences between vertices and roots is surjective, constant on $R_\kappa$ with value $\kappa \in V(\Gamma)$ and injective on $R_0$ taking values different from $\kappa$, multiplicities $\mu$ as above, degrees chosen suitably such that all $v \neq \kappa$ correspond to maps to a fiber, $g \equiv 0$ and $n$ legs. Let $\smash{ \overline{\modu M}({\mathfrak F}^\mathrm{rel},\Gamma) }$ be the respective moduli space of relative stable maps and $\smash{ \overline{\modu M}({\mathfrak F}^\mathrm{rel},\Gamma|\Lambda,{\mathbf m}) }$ the closed substack parametrizing maps such that $x_i \mapsto \xi_i$ for all $i$ and the order of contact with $L_0$ at $\xi_i$ is $m_i$. Note that $\smash{ {\mathbf q}_{\Gamma}:\overline{\modu M}({\mathfrak F}^\mathrm{rel},\Gamma|\Lambda,{\mathbf m}) \to L^R }$ factors through $L^{R_\kappa} \hookrightarrow L^R$. Let $\smash{ \overline{\modu M}({\mathfrak F}^\mathrm{rel},\Gamma|\Lambda,{\mathbf m};\Pi) }$ be the fiber of ${\mathbf q}_{\Gamma}$ over the tuple $\Pi \in L^R$.

 As in the case of the projective plane, we will be interested in working with the maps which are "as simple as possible" and their deformations. However, while in the case of ${\mathbb P}^2$ the simplest curves were nodal, in the case of ${\mathbb F}_1$, because of the imposed high order contact with $L_0$, even "optimal" generic behavior involves non-reduced curves. Let $\smash{ \modu S({\mathfrak F}^\mathrm{rel},\Gamma|\Lambda,{\mathbf m};\Pi) }$ be the open substack parametrizing relative stable maps $(C,(q_\alpha)_{\alpha \in R},(x_i)_{i=\overline{1,n}},f)$ mapping to the trivial expansion 
 $$ f: C = C_\kappa \cup \bigcup_{\alpha \in R_0} C_\alpha \longrightarrow {\mathbb F}_1 = {\mathbb F}_1[0] $$
 such that $C_\kappa \cong C_\alpha \cong {\mathbb P}^1$, the restriction of $f$ to $C_\alpha$ is a degree $\smash{ \mu(\alpha) = m_{\iota(\alpha)} }$ map from $C_\alpha$ onto the fiber of $\smash{ {\mathbb F}_1 }$ through $p_{\alpha}$ and $\smash{ \xi_{\iota(\alpha)} }$, completely branched at $p_{\alpha}$ and $\smash{ \xi_{\iota(\alpha)} }$ and the restriction of $f$ to $C_\kappa$ maps isomorphically onto a smooth curve in ${\mathbb F}_1$ which intersects each fiber of the ruling of ${\mathbb F}_1$ exactly once. Note that $\smash{ \modu S({\mathfrak F}^\mathrm{rel},\Gamma|\Lambda,{\mathbf m};\Pi) }$ may be empty for special configurations of points $p_{\alpha}$ and $\xi_i$. The stacks $\smash{ \overline{\modu S}({\mathfrak F}^\mathrm{rel},\Gamma|\Lambda,{\mathbf m};\Pi) }$ form a flat family over some open subset of $L^R$. Taking the closure, we obtain a substack $\smash{ \overline{\modu S}({\mathfrak F}^\mathrm{rel},\Gamma|\Lambda,{\mathbf m}) }$ of $\smash{ \overline{\modu M}({\mathfrak F}^\mathrm{rel},\Gamma|\Lambda,{\mathbf m}) }$. Let $\smash{ \overline{\modu S}({\mathfrak F}^\mathrm{rel},\Gamma|\Lambda,{\mathbf m};\Pi) }$ be the fiber or $\smash{ {\mathbf q}_{\Gamma}:\overline{\modu S}({\mathfrak F}^\mathrm{rel},\Gamma|\Lambda,{\mathbf m}) \to L^R }$ over $\Pi \in L^{R}$. 

Now we discuss doppelg\"{a}ngers. First, we consider the divergent case. Consider the relevant data, as in \S2.1: the topological profiles $\Gamma,\Gamma'$, $\smash{ p_{\tilde{\alpha}_1} = p_{\tilde{\alpha}_2} }$ for two given distinct $\smash{ \tilde{\alpha}_1 \neq \tilde{\alpha}_2 \in R_\kappa }$, the map $\smash {\rho:R \to R' \cong R / (\tilde{\alpha}_1 \sim \tilde{\alpha}_2) }$, etc. A divergent doppelg\"{a}nger pair 
\begin{equation} \label{div dop F notation}
\begin{cases}
\left( C,(x_i)_{i=\overline{1,n}},(q_{\alpha})_{\alpha \in R},f \right) \in \overline{\modu M}({\mathfrak F}^\mathrm{rel},\Gamma|\Lambda,{\mathbf m};\Pi)(\kk) \\
\left( C',(x'_i)_{i=\overline{1,n}},(q'_{\alpha'})_{\alpha' \in R'},f' \right) \in \overline{\modu M}({\mathfrak F}^\mathrm{rel},\Gamma'|\Lambda,{\mathbf m};\Pi')(\kk)
\end{cases}
\end{equation}
is said to be \emph{tame} in this case, if $\smash{ \left( C',(x'_i)_{i=\overline{1,n}},(q'_{\alpha'})_{\alpha' \in R'},f' \right) }$ belong to the substack $\smash{ {\modu S}({\mathfrak F}^\mathrm{rel},\Gamma'|\Lambda,{\mathbf m};\Pi') }$ and all the points $p'_{\alpha'}$ are distinct.

\begin{lem}\label{F-dopelganger divergent}
Let $\Gamma$ and $\Gamma'$ as above and $p_{\alpha}$ and $\xi_i$ configurations of points which satisfy the requirements above and are otherwise general. Then there exists a tame divergent doppelg\"{a}nger pair as in (\ref{div dop F notation}) such that $\smash{ \overline{\modu M}({\mathfrak F}^\mathrm{rel},\Gamma|\Lambda,{\mathbf m}) }$ has good behavior at $\smash{ ( C,(q_{\alpha})_{\alpha \in R} ,(x_i)_{i=\overline{1,n}},f )}$ relative to ${\mathbf q}_{\Gamma}$ and $\smash{ \overline{\modu S}({\mathfrak F}^\mathrm{rel},\Gamma|\Lambda,{\mathbf m}) }$ and $\smash{ \overline{\modu M}({\mathfrak F}^\mathrm{rel},\Gamma'|\Lambda,{\mathbf m}) }$ has good behavior at $\smash{ ( C',(q'_{\alpha'})_{\alpha' \in R'},(x'_i)_{i=\overline{1,n}},f' ) }$ relative to $\smash{ {\mathbf q}_{\Gamma'} }$ and $\smash{ \overline{\modu S}({\mathfrak F}^\mathrm{rel},\Gamma'|\Lambda,{\mathbf m}) }$. 
\end{lem}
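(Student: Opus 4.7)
The plan is to follow the template of Proposition \ref{plane divergent doppelganger}, with the principal simplification that the reference compactification $\overline{\modu S}({\mathfrak F}^\mathrm{rel},\Gamma'|\Lambda,{\mathbf m})$ is defined explicitly as the closure of an open substack ${\modu S}$ with a completely rigid combinatorial structure (one horizontal component plus fully ramified fiber covers), so no induction hypothesis is needed. As in the plane case, the argument is essentially a one-line dimension count, together with a forced construction and a routine normal-sheaf computation.

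First I would choose $[f']$ to be a general point of $\modu S({\mathfrak F}^\mathrm{rel},\Gamma'|\Lambda,{\mathbf m};\Pi')$ for a generic $\Pi'$. By the explicit description of this open substack preceding the lemma, the relative stable map consists of a horizontal component meeting each ruling fiber exactly once, together with completely ramified rational covers of the fibers through each $p'_{\alpha'}$. A normal sheaf computation analogous to the one performed in Theorem \ref{general thm} and in the first half of Proposition \ref{UC dopelganger} shows that the restriction of the appropriate twist of ${\sh N}_{f'}$ has vanishing $\mathrm{H}^1$ on every irreducible component (the twist has nonnegative degree on each), so ${\mathbf q}_{\Gamma'}$ is smooth at $[f']$ with the expected relative dimension. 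This gives the required good (in fact excellent) behavior at $[f']$.

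Next, the construction of $[f]$ is forced by condition (3) of Definition \ref{divergent doppelganger}. At the colliding distinguished point $q'_{\tilde{\alpha}'} \in C'$ one attaches a single rational tail $C_{\tilde{\alpha}'}$ carrying both distinguished marked points $q_{\tilde{\alpha}_1}, q_{\tilde{\alpha}_2}$, mapped as a degree $\mu'(\tilde{\alpha}') = \mu(\tilde{\alpha}_1) + \mu(\tilde{\alpha}_2)$ cover onto a fiber of the ruling of the final ruled component of ${\mathbb F}_1[1]$, with the prescribed ramification profile. For each $\alpha' \neq \tilde{\alpha}'$, the attached tail is simply the standard $t \mapsto t^{\mu(\alpha)}$ cover, totally ramified at both ends. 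By Remark \ref{branch point}, the extra simple branch point on $C_{\tilde{\alpha}'}$ carries no modulus, so the construction is canonical given $[f']$. Tameness follows at once from tameness of $[f']$ together with the genericity of the $p_{\alpha'}$.

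Finally, to check the good behavior of $\overline{\modu M}({\mathfrak F}^\mathrm{rel},\Gamma|\Lambda,{\mathbf m})$ at $[f]$, I would pass to a small \'{e}tale neighborhood $U$ of $[f]$ and observe that the closed substack of $U$ parametrizing maps that still factor through the expansion ${\mathbb F}_1[1]$ is identified, as a germ, with a neighborhood of $[f']$, hence has local dimension $\mathrm{vdim}\,\overline{\modu M}({\mathfrak F}^\mathrm{rel},\Gamma'|\Lambda,{\mathbf m}) = \mathrm{vdim}\,\overline{\modu M}({\mathfrak F}^\mathrm{rel},\Gamma|\Lambda,{\mathbf m}) - 1$. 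Thus the generic map in any irreducible component of $\overline{\modu M}({\mathfrak F}^\mathrm{rel},\Gamma|\Lambda,{\mathbf m})$ through $[f]$ must factor through ${\mathbb F}_1[0]$, and the discrete data $\Gamma$ then forces it into the combinatorial type defining $\modu S({\mathfrak F}^\mathrm{rel},\Gamma|\Lambda,{\mathbf m};\cdot)$. This places $[f]$ in $\overline{\modu S}$ as the unique component through it, and smoothness of ${\mathbf q}_\Gamma$ of the correct relative dimension is transported from the analogous statement for $[f']$ across the rigid attached tails. The only point that requires a moment's care, and the closest thing to an obstacle, is the existence of a one-parameter deformation of $[f]$ into ${\mathbb F}_1[0]$ realizing the expected dimension bound; this is standard because all added components are ${\mathbb P}^1$'s and the relevant normal sheaf twists on them have nonnegative degree, so the obstruction space vanishes.
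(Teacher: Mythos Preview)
Your proposal is correct and follows exactly the approach the paper indicates: it mirrors the proof of Proposition \ref{plane divergent doppelganger}, choosing $[f']$ general in $\modu S({\mathfrak F}^\mathrm{rel},\Gamma'|\Lambda,{\mathbf m};\Pi')$, constructing $[f]$ by the forced attachment of rational tails, and then using the one-codimension dimension count to see that any component through $[f]$ must generically map to ${\mathbb F}_1[0]$ and hence land in $\overline{\modu S}$. The paper's own proof is a two-line pointer to Proposition \ref{plane divergent doppelganger} with the remark that the dimension counts are trivial here, which is precisely what your elaboration confirms; your observation that no inductive hypothesis is needed (because the reference locus $\modu S$ is explicitly described and visibly irreducible) is the one genuine simplification relative to the plane case.
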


\begin{proof}
The same type of argument as in Proposition \ref{plane divergent doppelganger} will work. However, the required dimension counts are trivial in this case.
\end{proof}

A similar property holds for convergent doppelg\"{a}ngers. Again, we have data as in \S2.1: the topological profiles $\Gamma,\Gamma'$, the map $\smash {\rho:R \to R' \cong R / (\tilde{\alpha}_1 \sim \tilde{\alpha}_2) }$, $\smash{ p_{\tilde{\alpha}_1} = p_{\tilde{\alpha}_2} }$, for two distinct $\smash{ \tilde{\alpha}_1 \neq \tilde{\alpha}_2 \in R }$, etc. with the main difference that $\smash{ \tilde{\alpha}_1  \in R_\kappa }$ and $\smash{ \tilde{\alpha}_2  \in R_0 }$ in this case. Consider a convergent doppelg\"{a}nger pair (same notation as in (\ref{div dop F notation})). We say that such a pair is tame if $\smash{ \left( C,(x_i)_{i=\overline{1,n}},(q_{\alpha})_{\alpha \in R},f \right) } $ belongs to $\smash{ \overline{\modu S}({\mathfrak F}^\mathrm{rel},\Gamma|\Lambda,{\mathbf m};\Pi) }$, all the points $p'_{\alpha'}$ are distinct and they only "line up" under $L \cong L_0$ with $\smash{ \xi_{\iota(\tilde{\alpha}_2)} }$ among all $\xi_i$.

\begin{lem}\label{F-dopelganger convergent}
Let $\Gamma$ and $\Gamma'$ as above and $p_{\alpha}$ and $\xi_i$ configurations of points which satisfy the requirements above and are otherwise general. Then there exists a tame convergent doppelg\"{a}nger pair denoted as in (\ref{div dop F notation}) so that $\smash{ \overline{\modu M}({\mathfrak F}^\mathrm{rel},\Gamma|\Lambda,{\mathbf m}) }$ has good behavior at $\smash{ ( C,(q_{\alpha})_{\alpha \in R},(x_i)_{i=\overline{1,n}},f ) }$ relative to $\smash{ {\mathbf q}_{\Gamma} }$ and $\smash{ \overline{\modu M}({\mathfrak F}^\mathrm{rel},\Gamma|\Lambda,{\mathbf m}) }$ and $\smash{ \overline{\modu M}({\mathfrak F}^\mathrm{rel},\Gamma'|\Lambda,{\mathbf m}) }$ has good behavior at $\smash{ ( C',(q'_{\alpha'})_{\alpha' \in R'} ,(x'_i)_{i=\overline{1,n}},f' )}$ relative to ${\mathbf q}_{\Gamma'}$ and $\smash{ \overline{\modu S}({\mathfrak F}^\mathrm{rel},\Gamma'|\Lambda,{\mathbf m}) }$.
\end{lem}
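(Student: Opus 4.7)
The plan is to mirror the argument of Proposition \ref{UC dopelganger}, noting that the simpler geometry of ${\mathbb F}_1$ --- specifically, that all maps in sight have rational components and that the ruling effectively decouples the deformations --- replaces the careful graph-theoretic choice of a subgraph $G$ and the ensuing snake-lemma computation by routine dimension checks, much as in the proof of Lemma \ref{F-dopelganger divergent}. Since $\tilde{\alpha}_1 \in R_\kappa$ and $\tilde{\alpha}_2 \in R_0$, the pushout data places us in the regime of (\ref{genus convergent disconnected}) with $|V(\Gamma')| = |V(\Gamma)| - 1$, and the genus constraint is vacuous because $g \equiv 0$.

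I first construct $[f]$. Given the generality of the $\xi_i$ and the $p_\alpha$, I choose the main component $C_\kappa \cong {\mathbb P}^1$ to map isomorphically onto a smooth section of the ruling of ${\mathbb F}_1$ in the appropriate divisor class, with the prescribed orders of contact with $L$ at the points $p_\alpha$ ($\alpha \in R_\kappa$) and the correct residual intersection with $L_0$; the existence of such a section is a standard base-point-free analysis in the relevant linear system on ${\mathbb F}_1$. I then attach, at each $p_\alpha$ for $\alpha \in R_0$, a rational tail $C_\alpha \cong {\mathbb P}^1$ that maps with total ramification onto the fiber through $p_\alpha$ and $\xi_{\iota(\alpha)}$. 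By construction $[f] \in {\modu S}({\mathfrak F}^{\mathrm{rel}},\Gamma|\Lambda,{\mathbf m};\Pi)$, and the tameness requirement --- that the $p'_{\alpha'}$ line up with some $\xi_i$ only at $\xi_{\iota(\tilde{\alpha}_2)}$ --- is arranged by choosing the remaining $p_\alpha$, $\alpha \in R_\kappa \setminus \{\tilde{\alpha}_1\}$, generically on $L$. The deformation-theoretic check at $[f]$ is a simpler version of that in Proposition \ref{UC dopelganger}: every node of $C$ is a transversal intersection of some fiber tail with $C_\kappa$, so no subgraph $G$ is needed, and the analog of the short exact sequence (\ref{ses1}) is a direct sum indexed by $\alpha \in R_0$ whose middle summands are line bundles of nonnegative degree on ${\mathbb P}^1$. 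Thus $\mathrm{H}^1$ vanishes termwise, and surjectivity of the gluing map onto $\bigoplus_{\alpha \in R_0} \mathrm{Def}^1(r_\alpha)$ follows from a direct degree count on each $C_\alpha$; the expected-dimension and good-behavior statements at $[f]$ are then immediate.

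The construction of $[f']$ is forced by Definition \ref{convergent doppelganger} and Remark \ref{branch point}: expand the target to ${\mathfrak F}^{\mathrm{rel}}[1]$ and glue a new rational component $C_{\tilde{\alpha}'} \cong {\mathbb P}^1$ onto $C_\kappa$ (at the preimage of $p_{\tilde{\alpha}_1}$) and onto $C_{\tilde{\alpha}_2}$ (at its ramification point over $p_{\tilde{\alpha}_2}$), mapping $C_{\tilde{\alpha}'}$ onto a fiber of the new ruled component with ramification indices $\mu(\tilde{\alpha}_1), \mu(\tilde{\alpha}_2)$ at the two nodes and total ramification at $q'_{\tilde{\alpha}'}$; Remark \ref{branch point} pins down the cross-ratio of the four ramification points and eliminates any spurious moduli. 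Good behavior at $[f']$ then follows from the standard dimension dichotomy: in an \'{e}tale neighborhood of $[f']$, the relative stable maps either have the same dual graph as $f'$ with target ${\mathfrak F}^{\mathrm{rel}}[1]$ --- forming a stratum of dimension one less than $\mathrm{vdim}\,\overline{\modu M}({\mathfrak F}^{\mathrm{rel}},\Gamma'|\Lambda,{\mathbf m})$ --- or else they have the dual graph of a generic element of ${\modu S}({\mathfrak F}^{\mathrm{rel}},\Gamma'|\Lambda,{\mathbf m};\Pi')$ mapping to ${\mathfrak F}^{\mathrm{rel}}[0] = {\mathbb F}_1$. Hence $\overline{\modu S}({\mathfrak F}^{\mathrm{rel}},\Gamma'|\Lambda,{\mathbf m})$ is the unique irreducible component through $[f']$, and the expected relative dimension of ${\mathbf q}_{\Gamma'}$ is clear. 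The only mild obstacle is checking that the degree bookkeeping in the $[f]$-step transports cleanly from the ${\mathbb P}^2$ setting of Proposition \ref{UC dopelganger} to ${\mathbb F}_1$, but this is automatic since each fiber tail is a totally ramified rational cover and contributes transparently to the normal-sheaf calculation.
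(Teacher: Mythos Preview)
Your approach is what the paper intends --- it explicitly says the proof is ``morally similar to Proposition \ref{UC dopelganger}, but much easier'' and leaves it to the reader --- and your overall structure (construct $[f]$ in the nice locus, build $[f']$ by Definition \ref{convergent doppelganger} and Remark \ref{branch point}, then run the dimension dichotomy) is right.

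There is one inaccuracy worth flagging. You describe the $C_\alpha$ for $\alpha \in R_0$ as rational tails \emph{attached} to $C_\kappa$, and then speak of the nodes $C_\kappa \cap C_\alpha$ and a short exact sequence indexed by $R_0$. But recall that $\lambda:R \to V(\Gamma)$ is injective on $R_0$ with values different from $\kappa$, so $|V(\Gamma)| = 1 + |R_0|$ and the source $C$ is a \emph{disjoint} union $C_\kappa \sqcup \bigsqcup_{\alpha \in R_0} C_\alpha$: there are no nodes in $C$ at all. This only helps you --- the deformation theory at $[f]$ is then a product over vertices, with the $C_\alpha$ rigid and $C_\kappa$ contributing the ${\mathbb G}_m$-worth of sections, so the good-behavior check is immediate with no analogue of (\ref{ses1}) needed. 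The bridging by $C_{\tilde{\alpha}'}$ you describe in the $[f']$ step is exactly what connects the two previously disjoint pieces $C_\kappa$ and $C_{\tilde{\alpha}_2}$, in line with the case (\ref{genus convergent disconnected}). With that correction your argument goes through.
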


\begin{proof}
This is morally similar to Proposition \ref{UC dopelganger}, but much easier, so it is left to the reader. 
\end{proof}

\section{The landscape of topological profiles}

\subsection{The degeneration setup} From now on, we will be concerned with the degeneration of the moduli spaces of maps when the plane undergoes the well-known degeneration to the union of a projective plane with an ${\mathbb F}_1$ surface, following the general theory developed by J. Li ~\cite{[Li01], [Li02]}. In this subsection, we only introduce notation. Let ${\mathbb A}^1 = \mathrm{Spec} (\kk[t])$ and ${\mathbb A}^\times = \mathrm{Spec} (\kk[t]_{(t)})$. 

We have a morphism $\pi:W \to {\mathbb A}^1 $ such that $W$ is smooth, $\pi^{-1} {\mathbb A}^\times \cong {\mathbb P}^2 \times {\mathbb A}^\times$ and $W_0 = P \cup_\ell F$, where $P$ is a projective plane, $F$ is an ${\mathbb F}_1$ surface and $\ell = P \cap F$ is a projective line. Let $L \times {\mathbb A}^1$ be the Zariski closure of $L \times {\mathbb A}^\times$ inside $W$, so that $L_0 \cap \ell = \emptyset$. Let ${\mathfrak P}^\mathrm{rel}$ and ${\mathfrak F}^\mathrm{rel}$ be the stacks of expansions associated to the pairs $(P, \ell)$ and $(F, \ell)$.

We introduce the degeneration of $\smash{ \overline{\modu M}_{g,n}({\mathbb P}^2,d) }$ and of the "stable map-style" compactification of the generalized Severi variety $\smash{ \overline{\modu S}_{g}({\mathbb P}^2,d|\Lambda,{\mathbf m}) \subseteq \overline{\modu M}_{g}({\mathbb P}^2,d|\Lambda,{\mathbf m}) }$. The total space forms a family $\smash{ \overline{\modu M}({\mathfrak W},\Gamma) \to {\mathbb A}^1 }$ which is proper over ${\mathbb A}^1$ by \cite{[Li01]}. The topological data $\Gamma$ in this case only consists of the degree $d$, the genus $g$ and the number $n$ of ordinary marked points.  Whenever $\Gamma_{\mathfrak P}$ and $\Gamma_{\mathfrak F}$ are compatible in the sense of having the same number $r$ of roots and the corresponding roots are weighted identically, the fiber product of the evaluation maps at the distinguished marked points
\begin{equation}\label{distinguished evaluation map}
{\mathbf q}_{\mathfrak P}:\overline{\modu M}({\mathfrak P}^\mathrm{rel},\Gamma_{\mathfrak P}) \longrightarrow \ell^r \text{ and } {\mathbf q}_{\mathfrak F}:\overline{\modu M}({\mathfrak F}^\mathrm{rel},\Gamma_{\mathfrak F}) \longrightarrow \ell^r
\end{equation}
admits a morphism
\begin{equation}\label{gluing map} 
\Phi_{\eta}: \overline{\modu M}({\mathfrak P}^\text{rel},\Gamma_{\mathfrak P}) \times_{\ell^r} \overline{\modu M}({\mathfrak F}^\text{rel},\Gamma_{\mathfrak F}) \longrightarrow \overline{\modu M}({\mathfrak P}^\mathrm{rel} \sqcup {\mathfrak F}^\mathrm{rel},\Gamma_{\mathfrak P} \sqcup \Gamma_{\mathfrak F})
\end{equation}
to a closed substack of $\smash{ \overline{\modu M}({\mathfrak W}_0,\Gamma) }$, which glues two relative stable maps along their distinguished marked points.

\begin{sublem}[{\cite[Proposition 4.13]{[Li01]}}]\label{unimportant Li lemma}
$\Phi_{\eta}$ is proper and \'{e}tale. 
\end{sublem}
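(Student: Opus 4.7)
The plan is to invoke \cite[Proposition 4.13]{[Li01]} directly, of which Fact \ref{unimportant Li lemma} is a specialization: the stack ${\mathfrak W}$ together with the expansions ${\mathfrak P}^{\mathrm{rel}}$ and ${\mathfrak F}^{\mathrm{rel}}$ of its central fiber along $\ell$ fits squarely into Li's general framework, and the morphism (\ref{gluing map}) is the gluing morphism of that theorem. Hence nothing genuinely new needs to be proven; one only needs to match the notational conventions. It is nevertheless worth sketching why the statement is believable.

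Conceptually, a point in the image of $\Phi_\eta$ is a relative stable map into (an expansion of) $W_0$ whose source splits along the preimage of $\ell$ into a $P$-side piece and an $F$-side piece, meeting along $\ell$ with tangencies prescribed by the root weights $\mu$. Properness of $\Phi_\eta$ follows from the valuative criterion, combined with the properness of $\overline{\modu M}({\mathfrak P}^\mathrm{rel},\Gamma_{\mathfrak P})$ and $\overline{\modu M}({\mathfrak F}^\mathrm{rel},\Gamma_{\mathfrak F})$ over their respective stacks of expansions (also established in \cite{[Li01]}), and the closedness of the diagonal $\ell^r \hookrightarrow \ell^r \times \ell^r$ that is used to form the fiber product.

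The substantive point is \'{e}taleness, and it is exactly where the central technical input of Li's theory enters. The key observation is the local predeformation analysis at a node shared by the source and target: \'{e}tale locally at a point in the image, the stack $\overline{\modu M}({\mathfrak W}_0,\Gamma)$ is cut out by one equation of the form $xy = t^{\mu(\alpha)}$ for each of the $r$ matched distinguished marked points, where $t$ is the smoothing parameter of the target along $\ell$ and the exponent records the enforced contact order. Setting $t=0$ cuts out precisely the closed substack parametrizing maps whose source splits as described above, and this substack is canonically identified with the fiber product on the left-hand side of (\ref{gluing map}). Consequently $\Phi_\eta$ is a formal isomorphism onto its image, hence \'{e}tale; together with properness this is the claim. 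The hardest part — carrying out this local predeformation analysis rigorously in the logarithmic/relative stable maps framework — is the principal content of \cite{[Li01], [Li02]} and need not be reproduced here.
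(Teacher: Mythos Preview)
Your proposal is correct and matches the paper's approach: the statement is simply cited from \cite[Proposition 4.13]{[Li01]} without proof, so invoking that reference is precisely what is required. Your additional sketch of why properness and \'{e}taleness hold is accurate and goes beyond what the paper provides, but is consistent with Li's argument.
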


 For each compatible pair $(\Gamma_{\mathfrak P}, \Gamma_{\mathfrak F})$, there exists a line bundle ${\mathbf L}_\eta$ on the total space $\smash{ \overline{\modu M}({\mathfrak W},\Gamma) }$ with a global section ${\mathbf s}_\eta$, such that the vanishing locus of ${\mathbf s}_\eta$ is a closed substack $\smash{ \overline{\modu M}({\mathfrak W}_0,\eta) }$ of $\smash{ \overline{\modu M}({\mathfrak W}_0,\eta) }$ which is topologically identical to the image of (\ref{gluing map}):
\begin{equation} 
\left| \overline{\modu M}({\mathfrak W}_0,\eta) \right|_\mathrm{top} \cong \left| \overline{\modu M}({\mathfrak P}^\text{rel} \sqcup {\mathfrak F}^\text{rel},\Gamma_{\mathfrak P} \sqcup \Gamma_{\mathfrak F}) \right|_\mathrm{top}
\end{equation}
For the lack of a better phrase, we will sometimes refer to $\smash{ \overline{\modu M}({\mathfrak W}_0,\eta) }$ as a \emph{virtual component} of $\smash{ \overline{\modu M}({\mathfrak W}_0,\Gamma) }$. Although $\smash{ \overline{\modu M}({\mathfrak W}_0,\Gamma) }$ and $\smash{ \overline{\modu M}({\mathfrak P}^\text{rel} \sqcup {\mathfrak F}^\text{rel},\Gamma_{\mathfrak P} \sqcup \Gamma_{\mathfrak F}) }$ most certainly have different nonreduced structures (which is important in enumerative problems), this subtlety will be irrelevant throughout most of this paper. 

The \emph{topological profile} $\eta$ describing a virtual component $\smash{ \overline{\modu M}({\mathfrak W}_0,\eta) }$ of the central fiber $\smash{ \overline{\modu M}({\mathfrak W}_0,\Gamma) }$ consists of the following data:

$(D_1)$ a connected bipartite multigraph $G$ with vertices $V(G) = V({\mathfrak P}) \sqcup V({\mathfrak F})$ and $r$ \emph{labeled} edges forming the set $E(G)$;

$(D_2)$ a genus function $g:V({\mathfrak P}) \sqcup V({\mathfrak F}) \to \nn$;

$(D_3)$ a weight function $\mu:E(G) \to \nn^*$ at the distinguished marked points;

$(D_4)$ degree functions $d_{\mathfrak P}: V({\mathfrak P}) \to \nn^*$ and $e_{\mathfrak F},d_{\mathfrak F}: V({\mathfrak F}) \to \nn$;\footnote{The treatment of degrees for this particular step in ~\cite{[Li01], [Li02]} is actually \emph{a priori} different. Instead of prescribing the algebraic equivalence classes, one only specifies the degrees relative to some suitable ample line bundles. However, because $\mathrm{Pic}({\mathbb P}^2) = {\mathbb Z}$ and $\mathrm{Pic}({\mathbb F}_1) = {\mathbb Z}^2$, in the case at hand, this data actually pins down the rational equivalence classes on both sides thanks to the kissing condition.}

$(D_5)$ a function $\nu:[n]:=\{1,2,...,n\} \to V(G)$,

\noindent subject to the following list of conditions:

$(C_1)$ $\sum_{v \in V({\mathfrak P}) }d_{\mathfrak P}(v) = \sum_{v \in V({\mathfrak F}) } e_{\mathfrak F}(v) $ and $\sum_{v \in V({\mathfrak F}) }d_{\mathfrak F}(v) = d$;

$(C_2)$ For each vertex $v$
\begin{equation}\label{sum of mult}
\sum_{e \in E(v)} \mu(e) = \begin{cases}
d_{\mathfrak P}(v)  & \text{if $v \in V({\mathfrak P})$,} \\[2ex]
e_{\mathfrak F}(v) & \text{if $v \in V({\mathfrak F})$};
\end{cases}
\end{equation}

$(C_3)$ The genus bound: for all $v \in V({\mathfrak P})$,
\begin{equation}\label{genus bound}
0 \leq g(v) \leq {d_{\mathfrak P}(v)-1 \choose 2};
\end{equation}

$(C_4)$ The topological requirement 
\begin{equation}\label{topological requirement}
\sum_{ v \in V({\mathfrak P})} {g(v)} +  \sum_{v \in V({\mathfrak F})} {g(v)} - |V(G)| + |E(G)| +1 = g.
\end{equation}
Of course, (\ref{topological requirement}) corresponds to the topological requirement that the relative stable maps glue to a map with semistable source of arithmetic genus $g$. We clarify that $e_{\mathfrak F}$ stands for the intersection number with $\ell$, while $d_{\mathfrak F}$ stands for the intersection number with $L_0$. The quantity (the multiplicity of $L$ after degenerating inside ${\mathbb P}^2$)
$$ \sum_{v \in V({\mathfrak F})} \left( d_{\mathfrak F}(v) - e_{\mathfrak F}(v) \right) = d -  \sum_{v \in V({\mathfrak P})} d_{\mathfrak P}(v) $$
will be called the \emph{height} of the given topological profile. 

Now we introduce the degeneration of the generalized Severi varieties. Let $\Lambda$ be the collection of points $p_1,p_2,...,p_n \in L$. As in \S1.2, let $\smash{ \overline{\modu M}_{g}({\mathfrak W},d|\Lambda,{\mathbf m}) }$ be the closed substack of $\smash{ \overline{\modu M}({\mathfrak W},\Gamma) }$ where, informally, the $i$th ordinary marked point maps to $\{p_i\} \times {\mathbb A}^1$ and the the pullback of the section defining $L \times {\mathbb A}^1$ to the source vanishes at least to order $\smash{ m_i }$ at the respective ordinary marked point. Hence its restriction over $\smash{ {\mathbb A}^\times }$ is $\smash{ {\mathbb A}^\times \times \overline{\modu M}_{g}({\mathbb P}^2,d|\Lambda,{\mathbf m}) }$. Let $\smash{ \overline{\modu D}_{g}({\mathfrak W},d|\Lambda,{\mathbf m}) }$ be the closure of $\smash{ {\mathbb A}^\times \times \overline{\modu S}_{g}({\mathbb P}^2,d|\Lambda,{\mathbf m}) }$ endowed with its reduced substack structure.

There is a straightforward parallel discussion concerning the virtual components of this degeneration. For instance, $\smash{ \overline{\modu S}({\mathfrak W}_0,\eta|\Lambda,{\mathbf m}) }$ is the vanishing locus of ${\mathbf s}_\eta$ on $\smash{ \overline{\modu D}_{g}({\mathfrak W},d|\Lambda,{\mathbf m}) }$. However, if $\nu([n]) \cap V({\mathfrak P}) \neq \emptyset$, then $\smash{ \overline{\modu S}({\mathfrak W}_0,\eta|\Lambda,{\mathbf m}) }$ is empty. The analogue of (\ref{gluing map}) will be introduced in \S5 since its use requires some care.

\begin{defn}
The \emph{landscape of topological profiles}, denoted by $\Omega$, is the collection of all topological profiles as above. The \emph{landscape of small topological profiles} $\Omega^s$ is the collection of all \emph{height one} topological profiles, with the additional properties that $\smash{ g|_{V({\mathfrak F})} \equiv 0 }$, $\smash{\nu([n]) = V({\mathfrak F})}$ and $\smash{ \nu^{-1}(v) }$ is a singleton for all $\smash{ v \in V({\mathfrak F}) }$ such that $\smash{ e_{\mathfrak F}(v) = d_{\mathfrak F}(v) }$.
\end{defn}

If $\eta$ is a small topological profile, the height one requirement implies that there exists a unique vertex $v \in V({\mathfrak F})$ such that $d_{\mathfrak F}(v) = e_{\mathfrak F}(v) + 1$ and also that $d_{\mathfrak F}(v) = e_{\mathfrak F}(v)$, for all other $v \in V({\mathfrak F})$. The special vertex will be called the \emph{distinguished $F$-vertex} of $\eta$. Distinguished $F$-vertices will typically be denoted by $\kappa$.

\subsection{The elementary operations on topological profiles} In this subsection, we will define a graph structure on the small landscape $\Omega^s$ defined above. The edges between elements of $\Omega^s$ reflect intersections of virtual components of the degenerate moduli space. Let $\eta$ be a topological profile in $\smash{ \Omega^s }$ with distinguished $F$-vertex $\smash{ \kappa \in V({\mathfrak F}) }$. If $\eta'$ is a modification of $\eta$ of any of the three types below, we draw an edge between $\eta$ and $\smash{\eta'}$, defining an \emph{unoriented} graph structure on $\smash{ \Omega^s }$.

\textit{Upper connected operation.} Given a vertex $v \in V({\mathfrak P})$ and two edges $e_1$ and $e_2$ incident to $v$ and $\kappa$, the topological profile $\eta'$ defined by increasing $g(v)$ by $1$ and replacing $e_1$ and $e_2$ with only one edge $e'$ of multiplicity $\mu'(e') = \mu(e_1) + \mu(e_2)$ is called the upper connected modification of $\eta$ at $v,e_1,e_2$. However, the following requirement needs to be satisfied:
\begin{equation}\label{UC ineq}
\min \{ \mu(e_1),\mu(e_2) \}  \leq {d_{\mathfrak P}(v) - 1 \choose 2 } - g(v)
\end{equation}

\textit{Upper disconnected operation.} Given two vertices $v_1,v_2 \in V({\mathfrak P})$, two edges $e_1$ and $e_2$ between $\kappa$ and $v_1$ respectively $v_2$, the topological profile $\eta'$ defined by replacing the two vertices $v_1$ and $v_2$ with one vertex $v'$ such that $d'_{\mathfrak P}(v') = d_{\mathfrak P}(v_1) + d_{\mathfrak P}(v_2)$, $g'(v') = g(v_1) + g(v_2)$ and the edges $e_1$ and $e_2$ with one edge $e'$ such that $\mu'(e') = \mu(e_1) + \mu(e_2)$ is called the upper disconnected modification of $\eta$ at $v_1,v_2,e_1,e_2$.

\textit{Lower disconnected operation.} Given two vertices $v_p \in V({\mathfrak P})$, $v_f \in V({\mathfrak F}) \backslash \{\kappa\}$ and edges $e$ between $v_p$ and $v_f$ and $\tilde{e}$ between $v_p$ and $\kappa$, the profile $\eta'$ obtained by crimping together $v_f$ and $\kappa$ into a vertex $\kappa'$ with $d'_{\mathfrak F}(\kappa') = d_{\mathfrak F}(\kappa) + d_{\mathfrak F}(v_f)$ and the edges $e$, $\tilde{e}$ into $e'$ with $\mu'(e') = \mu(e_1) + \mu(e_2)$ is called the lower disconnected modification of $\eta$ at $v_p,v_f,e,\tilde{e}$.

\tikzset{circ/.style = {draw, circle, minimum size = 4.5mm},}
\begin{tiny}
\begin{center}

\begin{tikzpicture}[scale=0.6]

\def\xe{-2}
\def\ye{0}

\def\xf{-5}
\def\yf{-6}

\def\xg{1.5}
\def\yg{-6}

\def\xa{8}
\def\ya{-6}

\def\xb{6}
\def\yb{0}

\def\xc{-2}
\def\yc{-11}

\def\xd{6}
\def\yd{-11}

\def\dx{1}
\def\dy{2}

\def\droot{1}
\def\dxroot{0.5}

\node at (\xg,\yg) [circ] (7P1) {1};
\node at (\xg,\yg - \dy) [circ] (7F1) {0}; 

\node at (\xg,\yg - \dy - \droot) (R7F11) {1}; \node at (\xg -\dxroot, \yg - \dy - \droot) (R7F12) {1}; \node at (\xg + \dxroot, \yg - \dy - \droot) (R7F13) {2};

\draw (7P1) -- (7F1);

\draw (7F1) -- (R7F11); 
\draw (7F1) -- (R7F12); 
\draw (7F1) -- (R7F13);

\node at (\xg - 0.2,\yg-1) {3}; 
\node at (\xg + 0.6,\yg-2) {$\kappa$};

\node at (\xf,\yf) [circ] (6P1) {0};
\node at (\xf,\yf - \dy) [circ] (6F1) {0}; 

\node at (\xf,\yf - \dy - \droot) (R6F11) {1}; \node at (\xf -\dxroot, \yf - \dy - \droot) (R6F12) {1}; \node at (\xf + \dxroot, \yf - \dy - \droot) (R6F13) {2};

\draw (6P1) to [in= 120, out = 240] (6F1);
\draw (6P1) to [in= 60, out = 300] (6F1);

\draw (6F1) -- (R6F11); 
\draw (6F1) -- (R6F12); 
\draw (6F1) -- (R6F13);

\node at (\xf - 0.6,\yf-1) {1}; 
\node at (\xf + 0.6,\yf-1) {2}; 
\node at (\xf + 0.6,\yf-2) {$\kappa$};

\node at (\xe,\ye) [circ] (5P1) {0};
\node at (\xe - \dx,\ye - \dy) [circ] (5F1) {0}; \node at (\xe - \dx,\ye - \dy - \droot) (R5F1) {1};
\node at (\xe + \dx,\ye-\dy) [circ] (5F2) {0}; \node at (\xe + \dx -\dxroot, \ye - \dy - \droot) (R5F21) {2}; \node at (\xe + \dx + \dxroot, \ye - \dy - \droot) (R5F22) {1};

\draw (5P1) -- (5F1); 
\draw (5P1) to [in= 140, out = -80] (5F2);
\draw (5P1) to [in= 100, out = -40] (5F2);

\draw (5F1) -- (R5F1); 
\draw (5F2) -- (R5F21); 
\draw (5F2) -- (R5F22);

\node at (\xe - 0.7,\ye-1) {1}; 
\node at (\xe + 0,\ye-1) {1}; 
\node at (\xe + 1,\ye-1) {1}; 
\node at (\xe + 1.6,\ye-2) {$\kappa$};

\node at (\xd,\yd) [circ] (4P1) {1};
\node at (\xd - \dx,\yd - \dy) [circ] (4F1) {0}; \node at (\xd - \dx,\yd - \dy - \droot) (R4F1) {2};
\node at (\xd + \dx,\yd-\dy) [circ] (4F2) {0}; \node at (\xd + \dx -\dxroot, \yd - \dy - \droot) (R4F21) {1}; \node at (\xd + \dx + \dxroot, \yd - \dy - \droot) (R4F22) {1};

\draw (4P1) -- (4F1); 
\draw (4P1) -- (4F2); 

\draw (4F1) -- (R4F1); 
\draw (4F2) -- (R4F21); 
\draw (4F2) -- (R4F22);

\node at (\xd - 0.7,\yd-1) {2}; 
\node at (\xd + 0.7,\yd-1) {1}; 
\node at (\xd + 1.6,\yd-2) {$\kappa$};

\node at (\xc - \dx,\yc) [circ] (3P1) {0};
\node at (\xc + \dx,\yc) [circ] (3P2) {0};
\node at (\xc, \yc - \dy) [circ] (3F1) {0}; 
\node at (\xc - \dxroot,\yc - \dy - \droot) (R3F11) {2};
\node at (\xc, \yc - \dy - \droot) (R3F12) {1};
\node at (\xc + \dxroot, \yc - \dy - \droot) (R3F13) {1};

\draw (3P1) to [in= 140, out = -80] (3F1);
\draw (3P1) to [in= 100, out = -40] (3F1);

\draw (3P2) -- (3F1); 

\draw (3F1) -- (R3F11); 
\draw (3F1) -- (R3F12); 
\draw (3F1) -- (R3F13);

\node at (\xc,\yc-1) {1}; 
\node at (\xc - 1,\yc-1) {1}; 
\node at (\xc + 0.7,\yc-1) {1}; 
\node at (\xc + 0.6,\yc-2) {$\kappa$};

\node at (\xb,\yb) [circ] (2P1) {1};
\node at (\xb - \dx,\yb - \dy) [circ] (2F1) {0}; \node at (\xb - \dx,\yb - \dy - \droot) (R2F1) {1};
\node at (\xb + \dx,\yb-\dy) [circ] (2F2) {0}; \node at (\xb + \dx -\dxroot, \yb - \dy - \droot) (R2F21) {2}; \node at (\xb + \dx + \dxroot, \yb - \dy - \droot) (R2F22) {1};

\draw (2P1) -- (2F1); 
\draw (2P1) -- (2F2); 

\draw (2F1) -- (R2F1); 
\draw (2F2) -- (R2F21); 
\draw (2F2) -- (R2F22);

\node at (\xb - 0.7,\yb-1) {1}; 
\node at (\xb + 0.7,\yb-1) {2}; 
\node at (\xb + 1.6,\yb-2) {$\kappa$};

\node at (\xa,\ya) [circ] (1P1) {1};
\node at (\xa - \dx,\ya - \dy) [circ] (1F1) {0}; \node at (\xa - \dx,\ya - \dy - \droot) (R1F1) {1};
\node at (\xa,\ya-\dy) [circ] (1F2) {0}; \node at (\xa, \ya - \dy - \droot) (R1F2) {1};
\node at (\xa + \dx,\ya-\dy) [circ] (1F3) {0}; \node at (\xa + \dx, \ya - \dy - \droot) (R1F3) {2};

\draw (1P1) -- (1F1); 
\draw (1P1) -- (1F2); 
\draw (1P1) -- (1F3); 

\draw (1F1) -- (R1F1); 
\draw (1F2) -- (R1F2); 
\draw (1F3) -- (R1F3);

\node at (\xa - 0.2,\ya-1) {1}; 
\node at (\xa - 0.7,\ya-1) {1}; 
\node at (\xa + 0.7,\ya-1) {1}; 
\node at (\xa + 1.6,\ya-2) {$\kappa$}; 

\draw [very thick, blue] (\xa,\ya + 0.5) -- (\xb + 0.8,\yb - 3); 

 \node at ( 2,-0.7 ) {conn.};
% \node at ( -3.7,-4.1 ) {$\circ$};
 \node at ( -1.8,-6.7 ) {conn.};
% \node at ( 3,-4.1 ) {$\circ$};
% \node at ( 7.9,-4.1 ) {$\circ$};
% \node at ( 4.2,-8.5 ) {$\circ$};
 \node at ( -3.8,-9.9 ) {disc.};

\draw [very thick, cyan] (\xe + 1.2,\ye - 1) -- (\xb - 1,\yb - 1);
\draw [very thick, blue] (\xe,\ye -3) -- (\xf + 0.5,\yf + 0.5);
\draw [very thick, cyan] (\xc - 1.5,\yc) -- (\xf,\yf - 3.2);
\draw [very thick, cyan] (\xg - 0.5,\yg - 1) -- (\xf + 0.8,\yf - 1);
\draw [very thick, blue] (\xg + 0.5,\yg + 0.5) -- (\xb - 1.2,\yb - 3);
\draw [very thick, blue] (\xg + 0.3,\yg - 1) -- (\xd,\yd + 0.5);

\end{tikzpicture}

\bigskip

\textbf{Fig. B.} Illustration of the case $d=4$, $g=1$, ${\mathbf m} = (2,1,1)$. 
\end{center}
\end{tiny}

\begin{prop}\label{landscape is connected}
The graph structure defined on $\Omega^s$ is connected.
\end{prop}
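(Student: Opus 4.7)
The plan is to show that the graph $\Omega^s$ is connected by exhibiting, for each fixed discrete data $(d,g,n,\mathbf m)$, a canonical target profile $\eta_0$ to which every $\eta \in \Omega^s$ can be reduced by a path of elementary operations; since the edges in $\Omega^s$ are unoriented, this suffices. I will take $\eta_0$ to have $V({\mathfrak F}) = \{\kappa\}$, a single $P$-vertex $v$ with $d_{\mathfrak P}(v) = d-1$, every marked point attached to $\kappa$, and exactly $k_0 := \max\{1, g - \binom{d-2}{2} + 1\}$ edges between $v$ and $\kappa$ with multiplicities $(d - k_0, 1, \ldots, 1)$. The value $k_0$ is the smallest value of $k$ compatible with the genus bound (\ref{genus bound}) given $g(v) = g - k + 1$ (forced by the topological requirement at a two-vertex profile).

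The reduction will proceed in three phases. In \emph{Phase 1}, I would eliminate every non-distinguished $F$-vertex using iterated lower disconnected moves: as long as $V({\mathfrak F}) \setminus \{\kappa\} \neq \emptyset$, connectedness of the bipartite graph $G$ supplies a $P$-vertex simultaneously adjacent to $\kappa$ and to some non-distinguished $F$-vertex (take the first $P$-vertex along any shortest path in $G$ from $\kappa$ into $V({\mathfrak F}) \setminus \{\kappa\}$), which unlocks a lower disconnected move and strictly decreases $|V({\mathfrak F})|$. In \emph{Phase 2}, once $V({\mathfrak F}) = \{\kappa\}$ every $P$-vertex is directly adjacent to $\kappa$ by connectedness and bipartiteness of $G$, so any two $P$-vertices admit an upper disconnected move; iterating collapses $V({\mathfrak P})$ to a singleton. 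The genus bound (\ref{genus bound}) is preserved throughout Phase 2, thanks to the elementary inequality $\binom{d_1-1}{2} + \binom{d_2-1}{2} \leq \binom{d_1+d_2-1}{2}$, which reduces to $d_1 d_2 \geq 1$.

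\emph{Phase 3} handles the resulting profile, now determined by a partition $(\mu_1, \ldots, \mu_k)$ of $d-1$ with $g(v) = g - k + 1$ and $k \geq k_0$. I first reduce $k$ to $k_0$ by iterated upper connected moves. When inequality (\ref{UC ineq}) fails for every pair of parts, all $\mu_i$ are necessarily $\geq 2$, and I apply a \emph{transfer}: split some $\mu_i$ as $1 + (\mu_i - 1)$ (a backwards upper connected move, legal because $g(v) \geq 1$ in this regime), then merge the new $1$ with a different part. The transfer returns to level $k$ but strictly decreases $\min_i \mu_i$, so finitely many transfers create a pair satisfying (\ref{UC ineq}) and $k$ drops. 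Once $k = k_0$, the same split-and-merge trick operating between levels $k_0$ and $k_0+1$ freely exchanges $1$'s between any two parts, which is enough to reach the canonical partition $(d - k_0, 1, \ldots, 1)$.

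The main obstacle I expect is verifying that the Phase 3 transfers are always unobstructed and terminate, especially at the boundary cases where $g(v) = \binom{d-2}{2}$ or $g(v) = 0$. This should reduce to careful bookkeeping of the identity $g(v) = g - k + 1$ and the inequality (\ref{UC ineq}) across the two regimes $g \leq \binom{d-2}{2}$ and $g > \binom{d-2}{2}$; the genuinely degenerate cases (roughly $g = 0$ or $d \leq 3$), where splitting would fail, turn out to be those in which the canonical partition is already uniquely forced, so no rearrangement is needed in the first place.
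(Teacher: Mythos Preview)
Your Phases~1 and~2 coincide with the paper's reduction to two-vertex profiles, and your Phase~3 is essentially correct but takes a different route. The paper drives the number of edges in the opposite direction: whenever the all-$1$'s partition is in range (i.e.\ $g \geq d-2$), it pushes the length \emph{up} to $d-1$ rather than down to $k_0$, connecting everything to $(1,1,\ldots,1)$. This is slicker because the only move needed is to split any part $z \geq 2$ as $1 + (z-1)$; the resulting instance of (\ref{UC ineq}) at the longer level reads $1 \leq {d-2 \choose 2} - g(v) + 1$, which is just the genus bound and always holds---so no transfers or case analysis are required. Your downward approach has to confront (\ref{UC ineq}) head-on, which is precisely what forces the transfer mechanism. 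In the remaining regime $g < d-2$ both arguments go down toward length $1$, since (\ref{UC ineq}) is automatic there.

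One small gap: a transfer does not ``strictly decrease $\min_i \mu_i$'' if you split a non-minimal part---e.g.\ from $(5,3,3)$, splitting the $5$ and merging the resulting $1$ into a $3$ gives $(4,4,3)$ with the minimum unchanged. You should specify that the \emph{minimum} part is the one split; then $\mu_{\min}-1$ becomes the new minimum and the termination argument goes through.
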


\begin{proof}
Note that any topological profile in $\Omega^s$ lives in the same connected component as a small topological profile whose associated bipartite graph has only one vertex on either shore. Indeed, if $|V({\mathfrak P})| > 1$ we can always perform an upper disconnected modification to decrease the total number of vertices. Similarly for $|V({\mathfrak F})| > 1$ and lower disconnected modifications. Therefore, it suffices to prove that the induced subgraph on the small topological profiles with $|V({\mathfrak P})| = |V(\mathfrak F)| = 1$ is connected. This problem can be rephrased in terms of partitions of $d-1$. Note that the topological requirement (\ref{topological requirement}) becomes $g(v) + r - 1 = g$, where $v$ is the unique vertex in $V({\mathfrak P})$. Motivated by this and (\ref{genus bound}), let ${\sh P}$ be the set of partitions $p$ of $d-1$ whose length is bounded as follows:
\begin{equation}\label{length interval}
g+1 - {d-2 \choose 2} \leq \mathrm{length}(p) \leq g+1.
\end{equation}
Note that the length corresponds to $r=|E(G)|$. The lower bound is vacuous if $g \leq {d-2 \choose 2}$ and that the upper bound is vacuous if $g \geq d-2$. We introduce three ranges according to these thresholds: $0 \leq g < d-2$ (low genus), $d-2 \leq g \leq {d-2 \choose 2}$ (average genus) and ${d-2 \choose 2} < g \leq {d-1 \choose 2}$ (high genus). Note that
$ {d-2 \choose 2} \geq d-3 $ for all $d$, so these ranges cover all possibilities.

We connect a partition $p$ containing two terms $x$ and $y$ to the partition $p'$ where $x$ and $y$ are replaced by $x+y$ if the following condition corresponding to (\ref{UC condition}) is satisfied:
\begin{equation}\label{min condition}
\min \{ x,y \} \leq {d-2 \choose 2} - g + \mathrm{length}(p) - 1.
\end{equation}
The subgraph above is isomorphic to the graph structure we've defined on ${\sh P}$. Thus we need to prove that the latter is connected.

\begin{center}
\begin{tikzpicture}[scale=0.45]

\def\a{11}
\def\b{5}
\def\c{-2}
\def\cc{-2.5}
\def\d{-2}
\def\e{3}
\def\f{-10}

\draw [thick, draw=black, fill=gray, opacity=0.5]
	(0-\a,2) -- (1-\a,2) -- (1-\a,-2) -- (0-\a,-2) -- cycle;
	
\draw [thin, draw = black] (0-\a,1)--(1-\a,1);
\draw [thin, draw = black] (0-\a,0)--(1-\a,0);
\draw [thin, draw = black] (0-\a,-1)--(1-\a,-1);

\draw [thick, draw=black, fill=gray, opacity=0.5]
	(0-\b,1.5) -- (2-\b,1.5) -- (2-\b,0.5) -- (1-\b,0.5) -- (1-\b,-1.5) -- (0-\b,-1.5) -- cycle;
	
\draw [thin, draw = black] (1-\b,1.5)--(1-\b,0.5);
\draw [thin, draw = black] (0-\b,0.5)--(1-\b,0.5);
\draw [thin, draw = black] (0-\b,-0.5)--(1-\b,-0.5);
	
\draw [thick, draw=black, fill=gray, opacity = 0.5]
	(0-\c,1.5-\d) -- (3-\c,1.5-\d) -- (3-\c,0.5-\d) -- (1-\c,0.5-\d) -- (1-\c,-0.5-\d) -- (0-\c,-0.5-\d) -- cycle;
	
\draw [thin, draw = black] (1-\c,1.5-\d)--(1-\c,0.5-\d);
\draw [thin, draw = black] (0-\c,0.5-\d)--(1-\c,0.5-\d);
\draw [thin, draw = black] (2-\c,1.5-\d)--(2-\c,0.5-\d);

\draw [thick, draw=black, fill=gray, opacity=0.5]
	(0-\cc,2-\e) -- (2-\cc,2-\e) -- (2-\cc,0-\e) -- (0-\cc,0-\e) -- cycle;
	
\draw [thin, draw = black] (0-\cc,1-\e)--(2-\cc,1-\e);
\draw [thin, draw = black] (1-\cc,0-\e)--(1-\cc,2-\e);

\draw [thick, draw=black, fill=gray, opacity=0.5]
	(0-\f,1) -- (4-\f,1) -- (4-\f,0) -- (0-\f,0) -- cycle;
	
\draw [thin, draw = black] (1-\f,0)--(1-\f,1);
\draw [thin, draw = black] (2-\f,0)--(2-\f,1);
\draw [thin, draw = black] (3-\f,0)--(3-\f,1);

\draw [->, very thick, blue] (-9.5,0) -- (-5.5,0);
\draw [->, very thick, blue] (-2.5,1) to [bend left] (1.5,2.5);
\draw [->, very thick, blue] (-3.5,-1) to [bend right] (2,-2);
\draw [->, very thick, blue] (5.5,3) to [bend left] (12,1.5);

\draw [-, thin, dashed, black] (5,-2) to [bend right] (12,-0.5);

\node at (7.5, -1.8) {\begin{tiny}not an edge (\ref{min condition})\end{tiny}};

\node at (-10.5, -3.5) {\begin{tiny} $g(v)=0$ \end{tiny}};

\node at (-4, -3.5) {\begin{tiny} $g(v)=1$ \end{tiny}};

\node at (3.5, -3.5) {\begin{tiny} $g(v)=2$ \end{tiny}};

\node at (12, -3.5) {\begin{tiny} $g(v)=3$ \end{tiny}};

\end{tikzpicture}
\end{center} 

\begin{center}
\begin{tiny}
\textbf{Fig. C.} The graph ${\sh P}$ of partitions of $d-1=4$, for $d=5$, $g=3$.
\end{tiny}
\end{center}

\begin{sublem}\label{partitions1}
Let $k \leq n$ be integers, $n$ positive. Let ${\sh P}_{n}^{\geq k}$ be the set of partitions of $n$ of length at least $k$. We connect a partition $p \in {\sh P}_{n}^{\geq k}$ containing two terms $x$ and $y$ to the partition $p' \in {\sh P}_{n}^{\geq k}$ where $x$ and $y$ are replaced by $x+y$, if $\min\{x,y\} + k \leq \mathrm{length}(p)$. Then the graph ${\sh P}_{n}^{\geq k}$ is connected.
\end{sublem}

\begin{proof}
Any partition $p' \in {\sh P}_{n}^{\geq k}$ except $n=1+1+...+1$ contains a term $z \geq 2$, which can be split $z = 1+(z-1)$, so it can be obtained from a partition $p$ of length one bigger in a legal way. Thus in any connected component, the partition of maximal length is $n=1+1+...+1$, so there is a unique connected component.
\end{proof}

\begin{sublem}\label{partitions2}
Let $k < n$ be integers, $n \geq 3$. Let ${\sh P}_{n}^{\leq k}$ be the set of partitions of $n$ of length at most $k$. We connect a partition $p \in {\sh P}_{n}^{\leq k}$ containing two terms $x$ and $y$ to the partition $p' \in {\sh P}_{n}^{\leq k}$ where $x$ and $y$ are replaced by $x+y$. Then the graph ${\sh P}_{n}^{\leq k}$ is connected.
\end{sublem}

\begin{proof}
This is entirely trivial.
\end{proof}

Fact \ref{partitions1} applied for $n=d-1$ and $k = g+1 - {d-2 \choose 2}$ proves the desired connectivity result in the high genus range and, in fact, also in the average genus range because we're not requiring $k$ to be positive. We claim that Fact \ref{partitions2} applied for $n=d-1$, $k = g+1$ implies the result in the low genus range. The point is that (\ref{min condition}), which reads
$$ \min \{ x,y \} \leq {n-1 \choose 2} - k + \mathrm{length}(p), $$
is automatically satisfied because $\lfloor \frac{n}{2} \rfloor \leq {n-1 \choose 2}$ for $ n \geq 3$. Finally, note that the case $d=3$ is fine because (\ref{length interval}) doesn't allow the partitions $2=1+1$ and $2=2$ simultaneously. \end{proof}

\section{Preliminaries: the Caporaso--Harris Theorem revisited} \marginpar{\textcolor{white}{ \begin {tiny} terrible, but maybe unavoidable \end{tiny} }}The Caporaso-Harris analysis ~\cite{[CH98]} was phrased in terms of plane curves which are forced to split off a copy of $L$ by imposing a $(d+1)$-st point of intersection with this line. However, there is an essentially equivalent way to phrase the arguments and results by instead considering the degeneration $\pi:W \to {\mathbb A}^1$ and the specialization of the complete linear systems $|{\sh O}_{W_t}(d)|$, $t \neq 0$, to the complete linear system on $W_0 = P \cup_\ell F$ obtained by gluing the line bundles ${\sh O}_P(d-1)$ and $\delta^*{\sh O}(d) \otimes {\sh O}_F(-(d-1)\ell)$ along $\ell$, where $\delta$ denotes the blowdown of $\ell$ on $F$.

In this language, the main non-technical difference between ~\cite{[CH98]} and the application of ~\cite{[Li01], [Li02]} to the case at hand is that the former (indirectly) utilizes a specific distribution of degrees on the components of the central fiber, while the latter considers all possible distributions of degrees on $P$ and $F$. We want the best of both worlds: we want the fewer components and the simplicity on the $F$-side of the former and at the same time we want the good deformation theoretic behavior of the latter. To do that, in this section we will restate some parts of the Caporaso--Harris analysis in our language. This will later be used to extract Corollary \ref{CH corollary}, which essentially allows us to use the formalism of degenerate stable maps, but only having to worry about the small topological profiles. We emphasize that the sole logical purpose of this section is to prove the respective corollary. 

Consider the pushforward ${\sh E} = \pi_* \left( {\mathrm {proj}}_{{\mathbb P}^2} {\sh O}_{{\mathbb P}^2} (d) \otimes {\sh O}_W(-F) \right)$, which is a rank $\smash{ {d+2 \choose 2} }$ free sheaf over ${\mathbb A}^1$, and its classical projectivization ${\mathbb P}{\sh E} = \mathrm{Proj}_{{\mathbb A}^1} \mathrm{Sym} ({\sh E}^\vee)$. We will use the results of ~\cite[\S3]{[CH98]} to describe the closure of $\smash{ {\mathbb A}^\times \times V_{d,g}(\Lambda,{\mathbf m}) }$ inside ${\mathbb P}{\sh E}$. These ideas are by now classical, so we will be very brief. 

Denote the closure by $\smash{ \overline{V}_{d,g,W}(\Lambda,{\mathbf m}) }$ and let $\smash{ \overline{V}_{d,g,W_0}(\Lambda,{\mathbf m}) }$ be its intersection with ${\mathbb P}{\sh E}_0$. By construction, the latter space is purely of the expected dimension $2d+g-1$. For each topological profile $\eta \in \Omega^s$ in the small landscape, it is straightforward to construct the classical counterpart $\smash{ \overline{V}_\eta \subset {\mathbb P}{\sh E}_0 }$ of the virtual component $\smash{ \overline{\modu S}({\mathfrak W}_0,\eta|\Lambda,{\mathbf m}) }$. We sketch the construction. Fix $\eta \in \Omega^s$. For each $v \in V({\mathfrak P})$, let $M_v \cong E(v) \cap E(\kappa)$ index the mobile distinguished marked points on the component labeled $v$ and let $F_v \cong E(v) \backslash E(\kappa)$ index the fixed distinguished marked points on the component labeled $v$. Let $\smash{ \Lambda|_{F_v} } \in \ell^{|F_v|}$ be the collection of points on $\ell$ corresponding under $\ell \cong L_0$ to the points in $\Lambda$ indexed by $\smash{ F_v \hookrightarrow R}$. We agree to write $\smash{ Y^\circ }$ for the open subset of $\smash{ Y \subset |{\sh O}_{{\mathbb P}^2}(\cdot)| }$ where the sections don't vanish identically on $\ell$. Let
$$ V^\circ_\eta|_v =  V_{d_{\mathfrak P}(v),g(v)} (\Lambda|_{F_v}, \mu|_{F_v}; \mu|_{M_v})^\circ. $$
Let $\smash{ V_{\eta}^\circ \subset {\mathbb P}{\sh E}_0 }$ parametrizing $1$-cycles on $W_0$ obtained by gluing appropriately the $1$-cycles in $P$ described above, $|F_v|$ fibers of $F$ with the suitable multiplicities and a curve in $|{\sh O}_F(d_{\mathfrak F}(w)f + \ell)|$ which either intersects $L_0$ at the appropriate points, or completely contains $L_0$, but not $\ell$. Note that the fibers of the map
\begin{equation}\label{ugly map}  
V_{\eta}^\circ \longrightarrow \prod_{v \in V(\Gamma_{\mathfrak P})} V^\circ_\eta|_v 
\end{equation}
are isomorphic to affine lines. Indeed, note that the set of $1$-cycles in $|{\sh O}_F(d_{\mathfrak F}(w)f + \ell)|$ intersecting $L_0$ and $\ell$ in predetermined divisors is an affine space for ${\kk}^\times$ and has a $0$-limit containing $L_0$ (which we allow) and an $\infty$-limit containing $\ell$, which we disallow. Finally, we define $\smash{ \overline{V}_\eta}$ to be the closure of $V_{\eta}^\circ$ inside ${\mathbb P}{\sh E}_0$.

\begin{thm}[Version of {~\cite[Theorem 1.2]{[CH98]}}]\label{CH version} The following hold:

(a) $\smash{ \overline{V}_{d,g,W_0}(\Lambda,{\mathbf m} ) }$ is contained set-theoretically in the union $\smash{ \bigcup_{\eta \in \Omega^s} \overline{V}_\eta }$. 

(b) Moreover, if we choose a general closed point $Y_0 \subset P \cup F$ of some $\overline{V}_\eta$ and, possibly after a base change, a family of curves in $V_{d,g}(\Lambda,{\mathbf m})$ flatly approaching $Y_0$, the central fiber of the family of maps obtained after running semistable reduction can be described as follows: it is the natural gluing of the normalization maps from the components mapping either into $P$ or to the special component in $F$ and maps of degrees $m_i$ from smooth rational curves onto fibers of $F$, totally ramified at the points of intersection with $L_0$ and $\ell$.  
\end{thm}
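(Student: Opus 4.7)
The plan is to derive Theorem \ref{CH version} from Theorem 1.2 of \cite{[CH98]} via a dictionary between their ``splitting off $L$'' framework and the present $W \to {\mathbb A}^1$ degeneration setup. The first step is to identify the sections of ${\sh O}_W(d) \otimes {\sh O}_W(-F)$ restricted to $W_0 = P \cup_\ell F$: they decompose as pairs $(C_P, C_F)$ with $C_P \in |{\sh O}_P(d-1)|$ and $C_F$ a divisor on $F \cong {\mathbb F}_1$ in class $\ell + df$, matched scheme-theoretically along $\ell$. This mirrors the CH step of imposing a $(d+1)$st intersection point with $L$: the twist $\otimes {\sh O}_W(-F)$ in the definition of ${\sh E}$ already incorporates one copy of $L$ being split off, which accounts for the height one condition on the small topological profiles.

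For part (a), I would classify the possible decompositions of $C_F$. A generic cycle compatible with the tangency profile $\sum m_i p_i$ along $L_0$ takes the form $\sigma_\kappa + \sum_{v \in V({\mathfrak F}) \setminus \{\kappa\}} d_{\mathfrak F}(v) \cdot f_v$, where $\sigma_\kappa$ is a smooth rational section in class $\ell + d_{\mathfrak F}(\kappa) f$ meeting $L_0$ at the marked points indexed by $\nu^{-1}(\kappa)$, and each $f_v$ is the fiber through the unique marked point $p_{\nu^{-1}(v)}$. This decomposition is precisely the data of a small topological profile $\eta \in \Omega^s$: the distinguished vertex $\kappa$ records $\sigma_\kappa$, the remaining $F$-vertices record the fiber components (each carrying exactly one marked point, whence the singleton condition on $\nu^{-1}$), the $P$-vertices record the irreducible components of $C_P$, and the edges of $G$ correspond to the intersection points $C_P \cap \ell = C_F \cap \ell$ with their multiplicities. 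Conditions $(C_1)$--$(C_4)$ unpack to degree counts, intersection matching along $\ell$, the plane-curve genus bound \cite[Proposition 2.2]{[CH98]}, and the arithmetic genus formula for the nodal union, respectively; the smallness conditions $g|_{V({\mathfrak F})} \equiv 0$, singleton $\nu^{-1}$ on non-distinguished $F$-vertices, and $\nu([n]) = V({\mathfrak F})$ are then immediate from the description of $C_F$. The statement that \emph{every} flat limit lies in some $\overline{V}_\eta$ is precisely the content of \cite[Theorem 1.2]{[CH98]} in disguise.

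For part (b), the semistable reduction is a standard local analysis, essentially identical to \cite[\S 3]{[CH98]} or Li's general theory \cite{[Li02]}. The reduced components of the limit cycle---those of $C_P$ and of $\sigma_\kappa$---contribute their normalizations to the semistable limit map. Each non-reduced fiber component $d_{\mathfrak F}(v) \cdot f_v = m_i \cdot f_v$ gives rise, after semistable reduction, to a smooth rational source mapping as a degree $m_i$ cover of $f_v$, totally ramified at the attaching point on $\ell$ and at $p_i \in L_0$---the only two points along $f_v$ where the total family fails to be reduced. Riemann--Hurwitz, together with the ${\mathbb G}_m$-symmetry already observed in Remark \ref{branch point}, pins down this cover uniquely.

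The main obstacle is bookkeeping rather than substance: one must verify that non-small profiles do not appear in the generic description. Configurations in which $\ell$ itself splits off from $C_F$ (forcing what would be $e_{\mathfrak F} < 0$ on the corresponding vertex), or in which the height exceeds one, are non-generic and appear only as specializations inside the closure of a genuine small topological profile, so they cause no harm in part (a). Once this is secured, the local picture of the CH degeneration matches the small-profile description exactly, so the semistable reduction description in part (b) follows with no additional difficulty.
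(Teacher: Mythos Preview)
Your approach is essentially the same as the paper's: both reduce to \cite[Theorem 1.2]{[CH98]} via the dictionary between ``splitting off $L$'' and the $W \to {\mathbb A}^1$ degeneration, and the paper likewise leaves part (b) to the reader.

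However, you gloss over the one genuinely nontrivial step in part (a). In your final paragraph you assert that configurations in which $\ell$ itself splits off from $C_F$ are ``non-generic and appear only as specializations inside the closure of a genuine small topological profile,'' but you give no argument for this. This is exactly the hypothesis needed to make the lemma-style reduction to \cite{[CH98]} work: one must know \emph{a priori} that a general section of ${\sh E}_0$ in $\overline{V}_{d,g,W_0}(\Lambda,{\mathbf m})$ does not vanish identically on $\ell$, since otherwise the restriction to $P$ is not a well-defined degree $d-1$ curve and the dictionary collapses. The paper supplies a specific mechanism: a descending induction on $g$, trivially seeded at $g = \binom{d-1}{2}$, together with the observation that once the conclusion is known for genus $g$, the locus of sections vanishing on $\ell$ inside $\bigcup_{\eta \in \Omega^s} \overline{V}_\eta$ has codimension $2$ (because connectivity of $G$ forces at least one mobile intersection with $\ell$), so dimension reasons propagate the hypothesis to genus $g-1$. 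Without this or an equivalent argument, your reduction is circular.
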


\begin{proof}[Sketch of reduction.] We sketch part (a) and leave part (b) to the reader. The main point of the reduction is the following completely elementary technical observation whose proof is also left to the reader.

\begin{lem}
Let $V$ be a reduced locally closed subscheme of $|{\sh O}_{{\mathbb P}^2}(d)|$ of pure dimension $N$ such that all elements of $V$ intersect $L$ at the same predetermined (possibly non-reduced) divisor $D \subset L$. Let $\smash{\overline{V}}$ be its closure and $\lim \overline{V}$ the fiber over $0$ of the closure of $V \times {\mathbb A}^\times$ inside the projective bundle ${\mathbb P}{\sh E}$. 

Assume that any \emph{general} element of $\lim \overline{V}$ (which is a nonzero section of ${\sh E}_0$ modulo scalars) has the property that it doesn't vanish identically on $\ell$.

Let $H_p \subset |{\sh O}_{{\mathbb P}^2}(d)|$ be the hyperplane of curves passing through $p$. Note that the hyperplane section $H_p \cap \overline{V}$ is independent of the choice of the point $p \in L \backslash D$ and denote it by $\overline{V}_L \subseteq |{\sh I}_{L/{\mathbb P}^2}(d)| \cong |{\sh O}_{{\mathbb P}^2}(d-1)|$. Then $\lim \overline{V}$ is contained set-theoretically in the projectiviation of the affine cone
$$ \left[\mathrm{Affine\mathop{ }cone\mathop{}over\mathop{}}\overline{V}_L \right] \times_{\mathrm{H}^0({\sh O}_\ell(d-1))} \mathrm{H}^0({\sh I}_{D \times \{0\}/F}(\ell+df) ). $$
\end{lem}

Applying the lemma for $V = V_{d,g}(\Lambda,{\mathbf m})$, the special case of ~\cite[Theorem 1.2]{[CH98]} when all contact points are fixed gives precisely the desired conclusion if we make the assumption that the sections $\sigma \in {\mathrm H}^0(W_0,{\sh E}_0)$ corresponding to general points $[\sigma]$ of $\smash{ \overline{V}_{d,g,W_0}(\Lambda,{\mathbf m}) }$ don't vanish identically on $\ell$. The assumption is indeed true, but it will be justified indirectly. To do that, we argue by descending induction on $g$. Note that the statement is trivial for the maximal $\smash{ g = {d-1 \choose 2} }$. For a fixed $g$, the locus of sections mod scalars of ${\sh E}_0$ in $\smash{ \bigcup_{\eta \in \Omega^s} \overline{V}_\eta }$ which vanish identically on $\ell$ has codimension $2$ in $\smash{ \bigcup_{\eta \in \Omega^s} \overline{V}_\eta }$, because, by connectivity of $G$, there has to be at least one mobile point of intersection with $\ell$. Therefore, the original assumption also holds for $g-1$ for obvious dimension reasons, justifying it for all $g$. 
\end{proof}

\section{Proof of the main theorem}

In this final section, we will give the inductive proof of the main theorem. In \S5.1, after taking one last technical precaution (by restricting to the loci where the fibers of the distinguished evaluation maps are equidimensional, essentially in order to ensure that taking fiber products preserves irreducibility), we use the discussion in \S4 to deduce that any component ${\modu X}$ of the total space of the moduli family contains at least some component of the central fiber from the small landscape. In \S5.2, we use the results of \S2 to prove the key ingredient, namely that whenever ${\modu X}$ contains a component from the small landscape, it also contains the "adjacent" components of the small landscape, in the sense defined in \S3. 

However, there is one more step, which somewhat undermines the word "coalesce" used in the introduction to describe the phenomenon: the components of the degenerate moduli space typically have high multiplicities and our arguments only ensure containment in a topological (multiplicity $1$) sense. Because of this, the right way to argue is by establishing the existence of a component of multiplicity one. Once this is done, our argument ensures that the arbitrary chosen component ${\modu X}$ contains this specific reduced component of the central fiber, so ${\modu X}$ has to be the only component (this final step is similar to the idea used in ~\cite{[Te10]}). The lemma will be treated separately in \S5.3. In \S5.4, we put all the ingredients together and conclude the proof.

From now on, we assume the following:
\begin{equation}\label{inductive assumption}
\text{Assume inductively that Theorem \ref{main theorem} is true for all $d'<d$.}
\end{equation}
In particular, this assumption implies that all $\overline{V}_\eta$ constructed in \S4 are irreducible. 

\subsection{The core of a virtual component}
Assume $\Lambda$ is general. Let $\eta = (\Gamma_{\mathfrak P},\Gamma_{\mathfrak F}) \in \Omega^s$ be a small topological profile. Consider the restricted evaluation maps
\begin{equation} 
{\mathbf q}_{\mathfrak P}: \overline{\modu S}({\mathfrak P}^{\mathrm{rel}},\Gamma_{\mathfrak P}) \to \ell^{|R|} \text{ and } {\mathbf q}_{\mathfrak F}: \overline{\modu S}({\mathfrak F}^{\mathrm{rel}},\Gamma_{\mathfrak F}|\Lambda,{\mathbf m}) \to \ell^{|E(\kappa)|} \hookrightarrow \ell^{|R|},
\end{equation}
where $E(\kappa)$ is the set of roots adjacent to the distinguished $F$-vertex $\kappa \in V({\mathfrak F})$. Note that (\ref{inductive assumption}) also implies that $\smash{ \overline{\modu S}({\mathfrak P}^{\mathrm{rel}},\Gamma_{\mathfrak P}) }$ is irreducible and ${\mathbf q}_{\mathfrak P}$ has generically irreducible geometric fibers, while the analogous statement on the $F$-side is trivial. Let $\smash{ \overline{\modu S}({\mathfrak P}^{\mathrm{rel}} \sqcup {\mathfrak F}^{\mathrm{rel}},\eta|\Lambda, {\mathbf m}) }$ be the image of $\smash{ \overline{\modu S}({\mathfrak P}^\text{rel},\Gamma_{\mathfrak P}) \times_{\ell^{|R|}} \overline{\modu S}({\mathfrak F}^\text{rel},\Gamma_{\mathfrak F}|\Lambda,{\mathbf m}) }$ under the gluing map (\ref{gluing map}). 

By Fact \ref{general fact} in the Appendix, $\smash{ \overline{\modu S}({\mathfrak P}^\mathrm{rel}, \Gamma_{\mathfrak P})|_{\ell^{|E(\kappa)|}}:= ({\mathbf q}_{\mathfrak P})^{-1}(\ell^{|E(\kappa)|}) }$ is irreducible. A similar statement concerning the restriction of $\smash{{\mathbf q}_{\mathfrak P}}$ to $\ell^{|E(\kappa)|}$ holds:
\begin{equation}\label{other evaluation}
{\mathbf q}_{\mathfrak P} : \overline{\modu S}({\mathfrak P}^\mathrm{rel}, \Gamma_{\mathfrak P})|_{\ell^{|E(\kappa)|}} \longrightarrow \ell^{|E(\kappa)|}
\end{equation}
is dominant with generically irreducible fibers. Of course, we have
$$\overline{\modu S}({\mathfrak P}^\mathrm{rel}, \Gamma_{\mathfrak P}) \times_{\ell^{|R|}} \overline{\modu S}({\mathfrak F}^{\mathrm{rel}},\Gamma_{\mathfrak F}|\Lambda,{\mathbf m}) = \overline{\modu S}({\mathfrak P}^\mathrm{rel}, \Gamma_{\mathfrak P})|_{\ell^{|E(\kappa)|}} \times_{\ell^{|E(\kappa)|}} \overline{\modu S}({\mathfrak F}^{\mathrm{rel}},\Gamma_{\mathfrak F}|\Lambda,{\mathbf m}). $$
Keeping in mind Fact \ref{unimportant Li lemma}, let 
$$  {\modu K}^\circ({\mathfrak P}^{\mathrm{rel}} \sqcup {\mathfrak F}^{\mathrm{rel}},\eta|\Lambda,{\mathbf m}) \subseteq  \overline{\modu S}({\mathfrak P}^{\mathrm{rel}} \sqcup {\mathfrak F}^{\mathrm{rel}},\eta|\Lambda, {\mathbf m}) $$ 
be the open substack where the fibers of both ${\mathbf q}_{\mathfrak P}$ and ${\mathbf q}_{\mathfrak F}$ have locally the expected dimension. This open locus will be called the \emph{strict core} of $\smash{ \overline{\modu S}({\mathfrak P}^{\mathrm{rel}} \sqcup {\mathfrak F}^{\mathrm{rel}},\eta|\Lambda,{\mathbf m}) }$. Fact \ref{fiber fact} stated in the Appendix -- or, to be extremely precise, its proof -- implies that it is irreducible. Its closure $\smash{ \overline{\modu K}({\mathfrak P}^{\mathrm{rel}} \sqcup {\mathfrak F}^{\mathrm{rel}},\eta|\Lambda,{\mathbf m}) }$ will be called the main component when we want to contrast it with other potential irreducible components, or simply the core.

\begin{cor}{(of Theorem \ref{CH version})}\label{CH corollary}
Let ${\modu X}$ be any irreducible component of the total space $\smash{ \overline{\modu D}_{g}({\mathfrak W},d|\Lambda,{\mathbf m}) }$ of the degeneration. Then there exists $\eta \in \Omega^s$ such that $\overline{\modu K}({\mathfrak P}^{\mathrm{rel}} \sqcup {\mathfrak F}^{\mathrm{rel}},\eta|\Lambda,{\mathbf m})$ is contained topologically in ${\modu X}$.
\end{cor}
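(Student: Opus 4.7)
The plan is to use the Caporaso--Harris reformulation (Theorem \ref{CH version}) to transfer the problem from relative stable maps to the classical projective linear system, where the small-landscape decomposition is already available, and then to lift back to stable maps using irreducibility of the strict core.

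First I observe that $\overline{\modu D}_{g}({\mathfrak W},d|\Lambda,{\mathbf m})$ is by construction the closure of $\smash{{\mathbb A}^\times \times \overline{\modu S}_{g}({\mathbb P}^2,d|\Lambda,{\mathbf m})}$, so every irreducible component $\modu X$ dominates $\mathbb{A}^1$ and has dimension $2d+g$; in particular its central fiber $\smash{\modu X_0 := \modu X \cap \overline{\modu M}({\mathfrak W}_0,\Gamma|\Lambda,{\mathbf m})}$ is nonempty with every component of dimension at least $2d+g-1$. Sending a relative stable map to its image $1$-cycle extends to a morphism $\overline{\modu D}_{g}({\mathfrak W},d|\Lambda,{\mathbf m}) \to {\mathbb P}{\sh E}$ over $\mathbb{A}^1$, which over $\mathbb{A}^\times$ factors through ${\mathbb A}^\times \times \overline{V}_{d,g}(\Lambda,{\mathbf m})$, hence globally through $\overline{V}_{d,g,W}(\Lambda,{\mathbf m})$. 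Restricting to $t=0$ and invoking Theorem \ref{CH version}(a), the cycle image of $\modu X_0$ lies inside $\bigcup_{\eta \in \Omega^s} \overline{V}_\eta$. Irreducibility of the components of $\modu X_0$ and finiteness of $\Omega^s$ let me fix an irreducible component $Y \subseteq \modu X_0$ and some $\eta \in \Omega^s$ with $\mathrm{cyc}(Y) \subseteq \overline{V}_\eta$; a direct dimension count ($\dim Y \geq 2d+g-1 = \dim \overline{V}_\eta$ by the computation implicit in the construction of $V_\eta^\circ$, while the cycle map has generically finite fibers because a stable map in the core is determined by its image up to the finite data of the normalization) shows that $\mathrm{cyc}(Y)$ is dense in $\overline{V}_\eta$.

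Next I pick a very general $Y_0 \in \mathrm{cyc}(Y)$ and some $\phi_0 \in Y$ with $\mathrm{cyc}(\phi_0) = Y_0$. Using that $\modu X$ dominates $\mathbb{A}^1$, I extend $\phi_0$ to a one-parameter family $\{\phi_t\}$ in $\modu X$ whose general members $\phi_t$ for $t \neq 0$ are normalizations of nice plane curves $\mathrm{cyc}(\phi_t) \in V_{d,g}(\Lambda,{\mathbf m})$ flatly specializing to $Y_0$. Theorem \ref{CH version}(b) now pins down $\phi_0$ exactly: it is the natural gluing of the normalization maps of the Severi components in $P$ and of the special component in $F$, together with degree-$m_i$ totally-ramified rational covers of the fibers of $F$. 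This is precisely the stable-map shape of a general member of the strict core ${\modu K}^\circ({\mathfrak P}^{\mathrm{rel}} \sqcup {\mathfrak F}^{\mathrm{rel}},\eta|\Lambda,{\mathbf m})$; genericity of $Y_0$ together with the inductive assumption \eqref{inductive assumption} applied to guarantee that \eqref{other evaluation} has the expected generic fiber dimension ensures that $\phi_0$ actually falls in the strict (rather than merely the full) core.

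Finally, the strict core is irreducible (by Fact \ref{fiber fact} applied to the fiber-product description recalled at the start of \S5.1) and $\modu X$ is a closed substack containing one of its general points $\phi_0$, so I conclude $\modu X \supseteq \overline{\modu K}({\mathfrak P}^{\mathrm{rel}} \sqcup {\mathfrak F}^{\mathrm{rel}},\eta|\Lambda,{\mathbf m})$, as required. The only delicate step is the last sentence of the preceding paragraph, namely checking that the limit $\phi_0$ indeed lies in the strict core and not on a boundary locus where the fibers of ${\mathbf q}_{\mathfrak P}$ or ${\mathbf q}_{\mathfrak F}$ jump in dimension; this is a genericity-of-$Y_0$ argument and is the sole place where the inductive hypothesis is used. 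Everything else is bookkeeping or a direct appeal to Theorem \ref{CH version}.
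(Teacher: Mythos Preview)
The paper does not write out a proof of this corollary; it is stated as a direct consequence of Theorem~\ref{CH version}, whose ``sole logical purpose'' (as the preamble to \S4 says) is to yield precisely this statement. Your argument supplies the intended bridge: pass from $\modu X$ to its image in ${\mathbb P}{\sh E}$ via the cycle map, use part~(a) to land in $\bigcup_{\eta}\overline{V}_\eta$, use the dimension match and the inductive irreducibility of $\overline{V}_\eta$ to hit some $\overline{V}_\eta$ dominantly, and then invoke part~(b) plus uniqueness of stable-map limits to identify the central fiber as a generic point of the strict core. This is essentially the approach the paper has in mind.

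One small caution: the sentence ``sending a relative stable map to its image $1$-cycle extends to a morphism $\overline{\modu D}_{g}({\mathfrak W},d|\Lambda,{\mathbf m}) \to {\mathbb P}{\sh E}$'' is stronger than you need and not obviously true as stated, since a relative stable map into a higher expansion of $W_0$ has no canonical image in the specific linear system ${\mathbb P}{\sh E}_0$. It suffices to know that the cycle map is a morphism over ${\mathbb A}^\times$ which is birational onto its image there (this is the content of the immersion~(\ref{sketchy immersion})), so that $\modu X$ corresponds birationally to an irreducible component of $\overline{V}_{d,g,W}(\Lambda,{\mathbf m})$; then properness of $\overline{\modu D}_g \to {\mathbb A}^1$ and uniqueness of stable limits let you run the rest of your argument without ever needing the cycle map to extend across $t=0$. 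With that adjustment your proof is complete.
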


\subsection{Connecting the virtual components} The following proposition is the key ingredient in our argument.

\begin{prop}\label{key trick}
Let $\eta, \eta' \in \Omega^s$ be topological profiles which determine an edge in the graph defined in \S4.2. Let ${\modu X}$ be an irreducible component of $\smash{ \overline{\modu D}_{g}({\mathfrak W},d|\Lambda,{\mathbf m}) }$ containing (in the topological sense) at least one of $\smash{ \overline{\modu K}({\mathfrak P}^{\mathrm{rel}} \sqcup {\mathfrak F}^{\mathrm{rel}},\eta|\Lambda,{\mathbf m}) }$ and $\smash{ \overline{\modu K}({\mathfrak P}^{\mathrm{rel}} \sqcup {\mathfrak F}^{\mathrm{rel}},\eta'|\Lambda,{\mathbf m}) }$. Then ${\modu X}$ contains both of them topologically. \end{prop}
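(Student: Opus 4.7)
The plan is, for each edge $\eta$--$\eta'$ in $\Omega^s$, to exhibit a single closed point $p$ of the central fiber of $\overline{\modu D}_g(\mathfrak{W},d|\Lambda,\mathbf m)$ that lies simultaneously in the strict cores $\modu K^\circ({\mathfrak P}^{\mathrm{rel}}\sqcup{\mathfrak F}^{\mathrm{rel}},\eta|\Lambda,\mathbf m)$ and $\modu K^\circ({\mathfrak P}^{\mathrm{rel}}\sqcup{\mathfrak F}^{\mathrm{rel}},\eta'|\Lambda,\mathbf m)$, and at which $\overline{\modu D}_g(\mathfrak{W},d|\Lambda,\mathbf m)$ has a unique local irreducible branch. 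Granting this, the given irreducible component $\mathcal X$ contains $p$ (via whichever core it is assumed to contain) and must coincide with that unique local branch near $p$; since the other core is irreducible (cf.\ the discussion in \S5.1, which relies on (\ref{inductive assumption})) and meets $\mathcal X$ in a nonempty open subset around $p$, it is contained in $\mathcal X$ globally, as required.

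To build $p$, I would split on the type of elementary operation defining the edge. For an upper connected or upper disconnected edge the change happens on the $\mathfrak P$-side and is precisely the modification encoded by a convergent doppelg\"anger pair in the plane: invoke Proposition \ref{UC dopelganger} -- whose hypothesis is provided by (\ref{inductive assumption}) -- to produce a tame convergent pair $(f_{\mathfrak P},f'_{\mathfrak P})$ of profiles $(\Gamma_{\mathfrak P},\Gamma'_{\mathfrak P})$, and simultaneously invoke Lemma \ref{F-dopelganger divergent} to produce a compatible tame divergent doppelg\"anger pair $(f_{\mathfrak F},f'_{\mathfrak F})$ of profiles $(\Gamma_{\mathfrak F},\Gamma'_{\mathfrak F})$ on the $\mathfrak F$-side, with the contact data on $\ell$ matched across the two sides by the collision rule of the operation. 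For a lower disconnected edge the roles of the two sides are swapped: apply Lemma \ref{F-dopelganger convergent} on the $\mathfrak F$-side and Proposition \ref{plane divergent doppelganger} on the $\mathfrak P$-side. In all three cases, the gluing recipe (\ref{gluing}) yields $f=f_{\mathfrak P}\sqcup f_{\mathfrak F}$ of profile $\eta$ and $f'=f'_{\mathfrak P}\sqcup f'_{\mathfrak F}$ of profile $\eta'$, and the composite identification $\gamma_1^{-1}\sqcup\gamma_2$ realizes $f\cong f'$; hence $p:=[f]=[f']$ is a single moduli point lying in both virtual components. Tameness together with the good-behavior clauses of the four statements invoked delivers locally expected relative dimension for both $\mathbf q_{\mathfrak P}$ and $\mathbf q_{\mathfrak F}$ at the corresponding pieces, which by definition of the strict core places $p$ in $\modu K^\circ$ for both $\eta$ and $\eta'$.

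The remaining and most substantial step is to upgrade the per-side good behavior into local irreducibility of the total space at $p$. This is where the key technical ingredient from Li's theory of expanded degenerations (\cite[Lemma 3.4]{[Li02]}, ultimately descending to the versal deformation of a node, and advertised in the introduction) enters: because the tangency multiplicities at the distinguished marked points are exactly those prescribed by the two profiles, Li's lemma pins down the local structure of $\overline{\modu M}(\mathfrak{W},\Gamma)$ along the divisor $\{t=0\}$ and isolates a single local branch of $\overline{\modu D}_g(\mathfrak{W},d|\Lambda,\mathbf m)$ through $p$ that specializes (as a divisor, with multiplicities) to the union of the two virtual components meeting at $p$. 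The hard part is not this last invocation in isolation -- it is a direct application of Li's formalism -- but the case-by-case bookkeeping for the three operations, verifying that heights, multiplicities, and marked-point data on the two sides of the degeneration line up so that the per-side good-behavior statements genuinely assemble to local irreducibility of the full total space rather than merely of each virtual component separately.
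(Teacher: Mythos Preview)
Your construction of the point $p=[f]=[f']$ lying in both strict cores, assembled from a convergent/divergent doppelg\"anger pair on each side and glued via (\ref{gluing}), is exactly what the paper does in Lemma \ref{two boats}, and your invocation of the good-behavior clauses to ensure $p$ belongs to the core and to no other irreducible component of $\overline{\modu M}({\mathfrak P}^{\mathrm{rel}}\sqcup{\mathfrak F}^{\mathrm{rel}},\eta|\Lambda,\mathbf m)$ (respectively $\eta'$) is also correct.

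The gap is in your final step. You claim that Li's lemma yields a \emph{unique local irreducible branch} of $\overline{\modu D}_g({\mathfrak W},d|\Lambda,\mathbf m)$ through $p$. This is not what \cite[Lemma 3.4]{[Li02]} says, and it is in general false: the components of the central fiber typically occur with multiplicity greater than one (this is precisely why Proposition \ref{multiplicity one} is needed as a separate ingredient), so the total space need not be unibranch at $p$. What Li's lemma actually provides is that the virtual component $\overline{\modu M}({\mathfrak W}_0,\eta)$ is the vanishing locus of a section $s_\eta$ of a line bundle ${\mathbf L}_\eta$ on the total space---i.e.\ it is an effective Cartier divisor. The paper's argument exploits this as follows: restricting $s_\eta$ to $\modu X$ (which contains $p$ via the $\eta'$-core), the vanishing locus near $p$ has local dimension at least $\dim\modu X-1=2d+g-1$; this vanishing locus sits inside $\overline{\modu M}({\mathfrak P}^{\mathrm{rel}}\sqcup{\mathfrak F}^{\mathrm{rel}},\eta|\Lambda,\mathbf m)$, which near $p$ coincides with the irreducible $(2d+g-1)$-dimensional core $\overline{\modu K}$ by the good-behavior clause; hence $\modu X$ contains that core. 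No local irreducibility of the total space is asserted or needed. Your proposal should replace the ``unique local branch'' claim with this Cartier-divisor dimension argument.
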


Before proving the proposition, we will use the results in \S2 to establish the following lemma, which operates under the assumptions of Proposition \ref{key trick}. However, the hypotheses of Proposition \ref{key trick} don't play a crucial role in the proof of the lemma and they could have been avoided at the expense of a slightly longer proof.

\begin{lem}\label{two boats}
There exists a map $\smash{ (C,f,x_1,x_2,...,x_n) \in \overline{\modu D}_{g}({\mathfrak W}_0, d|\Lambda,{\mathbf m})(\kk) }$ which belongs to both main components $\smash{ \overline{\modu K}({\mathfrak P}^{\mathrm{rel}} \sqcup {\mathfrak F}^{\mathrm{rel}},\eta|\Lambda,{\mathbf m}) }$ and $\smash{ \overline{\modu K}({\mathfrak P}^{\mathrm{rel}} \sqcup {\mathfrak F}^{\mathrm{rel}},\eta'|\Lambda,{\mathbf m}) }$ and doesn't belong to any other irreducible component of $\smash{ \overline{\modu M}({\mathfrak P}^{\mathrm{rel}} \sqcup {\mathfrak F}^{\mathrm{rel}},\eta|\Lambda,{\mathbf m}) }$ and $\smash{ \overline{\modu M}({\mathfrak P}^{\mathrm{rel}} \sqcup {\mathfrak F}^{\mathrm{rel}},\eta'|\Lambda,{\mathbf m}) }$ respectively.
\end{lem}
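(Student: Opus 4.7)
I would split into the three types of edge in the graph on $\Omega^s$ and, in each case, produce a pair of compatible doppelg\"{a}ngers, one on $(P,\ell)$ and one on $(F,\ell)$, and glue them along $\ell$ exactly as in the passage surrounding (\ref{gluing}). For an \emph{upper connected} edge at $(v,e_1,e_2)$: on the $P$-side the genus of $v$ goes up by one and two of its roots merge, so I apply Proposition \ref{UC dopelganger} (the hypothesis (\ref{UC condition}) is exactly the edge-existence condition (\ref{UC ineq})) to obtain a tame convergent doppelg\"{a}nger pair $(f_{\mathfrak P},f'_{\mathfrak P})$; on the $F$-side, two distinguished marked points on the single section $C_\kappa$ are identified while all genera stay zero, so I apply Lemma \ref{F-dopelganger divergent} to obtain a tame divergent doppelg\"{a}nger pair $(f_{\mathfrak F},f'_{\mathfrak F})$. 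An \emph{upper disconnected} edge is identical except that on the $P$-side one uses the disconnected version of Proposition \ref{UC dopelganger} (the one corresponding to (\ref{genus convergent disconnected}), which has no extra hypothesis). For a \emph{lower disconnected} edge at $(v_p,v_f,e,\tilde e)$ the roles switch: on the $P$-side the genera don't change and one merely merges two roots at $v_p$, which is divergent and handled by Proposition \ref{plane divergent doppelganger}; on the $F$-side a root on $\kappa$ is merged with the unique root on $v_f$, the merged genus is still zero but the vertex count drops by one, which is exactly the setting of Lemma \ref{F-dopelganger convergent}. On vertices not directly affected by the operation I would use the trivial (identity) doppelg\"{a}nger.

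Next I would glue. In every case the two pairs share the same images $p_\alpha\in\ell$ of their distinguished marked points by construction, so the glued relative stable maps $f:=f_{\mathfrak P}\sqcup f_{\mathfrak F}$ and $f':=f'_{\mathfrak P}\sqcup f'_{\mathfrak F}$ are defined. The isomorphism $\gamma_{\mathfrak P}^{-1}\sqcup\gamma_{\mathfrak F}$ identifies them, so they may be regarded as a single $W_0$-relative stable map $(C,f,x_1,\ldots,x_n)$; the ordinary marked points come from the $F$-side, and in the lower disconnected case the marked point on $v_f$ under $\eta$ is carried to the merged vertex $\kappa'$ under $\eta'$, consistently with the definition of $\Omega^s$. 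Tameness ensures the four constituent points lie in the respective $\overline{\modu S}$'s, so the glued point lies in both $\overline{\modu S}({\mathfrak P}^{\mathrm{rel}}\sqcup{\mathfrak F}^{\mathrm{rel}},\eta|\Lambda,{\mathbf m})$ and $\overline{\modu S}({\mathfrak P}^{\mathrm{rel}}\sqcup{\mathfrak F}^{\mathrm{rel}},\eta'|\Lambda,{\mathbf m})$. The expected-dimension clause of good behavior then forces the fibers of ${\mathbf q}_{\mathfrak P}$ and ${\mathbf q}_{\mathfrak F}$ to have their expected local dimensions at the constituent points, so the glued point sits in the strict core ${\modu K}^\circ$ and hence in the main component $\overline{\modu K}$ for both $\eta$ and $\eta'$.

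The hard part is the second half of the conclusion: ruling out membership in any other irreducible component of $\overline{\modu M}({\mathfrak P}^{\mathrm{rel}}\sqcup{\mathfrak F}^{\mathrm{rel}},\eta|\Lambda,{\mathbf m})$ and $\overline{\modu M}({\mathfrak P}^{\mathrm{rel}}\sqcup{\mathfrak F}^{\mathrm{rel}},\eta'|\Lambda,{\mathbf m})$. For this I would invoke the uniqueness clause of good behavior: at each of $f_{\mathfrak P},f_{\mathfrak F},f'_{\mathfrak P},f'_{\mathfrak F}$, the ambient $\overline{\modu M}$ contains $\overline{\modu S}$ as its only local irreducible component. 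Since the gluing morphism $\Phi_\eta$ is \'{e}tale and proper (Fact \ref{unimportant Li lemma}), any irreducible component of $\overline{\modu M}({\mathfrak P}^{\mathrm{rel}}\sqcup{\mathfrak F}^{\mathrm{rel}},\eta|\Lambda,{\mathbf m})$ through the glued point pulls back to an irreducible component of the fiber product $\overline{\modu M}({\mathfrak P}^\mathrm{rel},\Gamma_{\mathfrak P})\times_{\ell^{|E(\kappa)|}}\overline{\modu M}({\mathfrak F}^\mathrm{rel},\Gamma_{\mathfrak F}|\Lambda,{\mathbf m})$ through the constituent pair. With both factors locally reduced to $\overline{\modu S}$ and with fibers of expected local dimension over $\ell^{|E(\kappa)|}$, the standard fiber-product argument (essentially the one behind Fact \ref{fiber fact}) shows that this component must locally coincide with the product of the two main components, whose gluing image is the main component $\overline{\modu K}$. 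The same argument handles $\eta'$.
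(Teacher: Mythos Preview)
Your approach is essentially identical to the paper's: the same case split, the same pairing of Proposition~\ref{UC dopelganger} with Lemma~\ref{F-dopelganger divergent} (upper connected/disconnected) and Proposition~\ref{plane divergent doppelganger} with Lemma~\ref{F-dopelganger convergent} (lower disconnected), the same gluing via (\ref{gluing}), and the same uniqueness argument via Fact~\ref{unimportant Li lemma} and Fact~\ref{fiber fact}.

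One point you skip: the statement asserts the map lies in $\overline{\modu D}_{g}({\mathfrak W}_0, d|\Lambda,{\mathbf m})$, not merely in the two cores $\overline{\modu K}$. Membership in $\overline{\modu K}({\mathfrak P}^{\mathrm{rel}}\sqcup{\mathfrak F}^{\mathrm{rel}},\eta|\Lambda,{\mathbf m})$ places the map in the image of the gluing map $\Phi_\eta$, but $\overline{\modu D}_g$ is by definition the closure of the isotrivial family of nice curves over ${\mathbb A}^\times$, and a priori this is a separate containment. The paper handles this by invoking the ambient hypothesis of Proposition~\ref{key trick} (that some component ${\modu X}\subset\overline{\modu D}_g$ already contains one of the two cores), which immediately gives $[f]\in\overline{\modu D}_g$; it also remarks that this shortcut could be avoided at the cost of a longer argument. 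You should add a sentence to close this gap.
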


\begin{proof}
There are three cases depending on which type of modification the edge $[\eta\eta'] \in E(\Omega^s)$ corresponds to. In each case, we can use the results in \S2 to construct explicitly a degenerate stable map mapping to the first expansion ("accordion") of $W_0$, which belongs to both virtual components. Let $\Lambda = (\xi_1,...,\xi_n) \in L_0^n$.

\tikzset{circ/.style = {draw, circle, minimum size = 5.5mm},}

\begin{tiny}
\begin{center}

\begin{tikzpicture}[scale=0.7]

\def\xd{0}
\def\yd{0}

\draw [thick, draw= black, fill=gray, opacity=0.05] (\xd,\yd) circle [radius=3.2];

\draw [thick, draw=black, fill=gray, opacity=0.1]
       (\xd + 3,\yd -5) to [bend left] (\xd -3, \yd - 5) -- (\xd - 2,\yd -2) -- (\xd + 2,\yd -2) -- cycle;
       
\draw [thick, draw=black, fill=gray, opacity=0.05]
       (\xd + 3,\yd -8) to [bend left] (\xd -3, \yd - 8) -- (\xd - 2,\yd -5) -- (\xd + 2,\yd -5) -- cycle;
\draw [thick, draw=black, opacity = 0.1] (\xd - 2,\yd -8) -- (\xd + 2,\yd -8);

\draw (-3,0) to [in= 180, out = -80] (-1,\yd-2);
\draw (-2.5,0) to [in= 180, out = -70] (-1,\yd-2);

\draw (2,0) to [in=0, out = 80] (-1,\yd-2);
\draw (2,0) to [in=0, out = -100] (-1,\yd-2);

\draw (0,1) to [in= 180, out = -80] (1,\yd-2);
\draw (1.7,1) to [in= 0, out = -100] (1,\yd-2);

\draw [thick,blue] (-1,\yd-2) -- (-1,\yd-5); \draw plot [draw=blue, mark = *, mark size = 2] (-1, \yd - 4);
\draw (1,\yd-2) -- (1,\yd-5);

\draw (-1,\yd-5) to [in = 0, out=180] (-1.8,\yd-8);
\draw (-1,\yd-5) to [in = 180, out=0] (-0.2,\yd-8.3);

\draw (1,\yd-5) to [in = 0, out=180] (-0.2,\yd-8.3);
\draw (1,\yd-5) to [in = 80, out=0] (1.5,\yd-8.3);

\def\c{-7}
\def\yd{-2}

\draw (\c-2,\yd-2.5) to [in= 270, out = 0] (\c-0.5,\yd-2);
\draw (\c-2,\yd-1.5) to [in= 90, out = 0] (\c-0.5,\yd-2);

%\draw (\c-3,\yd-2) to [in= 180, out = -80] (\c-1,\yd-2);
%\draw (\c-2.5,\yd-2) to [in= 180, out = -70] (\c-1,\yd-1.5);

%\draw (\c-0.5,\yd-1.5) to [in=0, out = 80] (\c-1,\yd-1.5);
%\draw (\c-0.5,\yd-1.5) to [in=0, out = -100] (\c-1,\yd-2);

\draw (\c,\yd-1) to [in= 180, out = -70] (\c+1,\yd-2.5);
\draw (\c+2,\yd-1) to [in= 0, out = -110] (\c+1,\yd-2.5);

\draw [thick,blue] (\c-1,\yd-1) -- (\c-1,\yd-5.5); \draw plot [draw=blue, mark = *, mark size = 2] (\c-1, \yd - 4);

\draw node at (\c-2, \yd - 4) {ram. pt.};

\draw (\c+1,\yd-1) -- (\c+1,\yd-5.5);

\draw (\c-1,\yd-5) to [in = 0, out=180] (\c-1.8,\yd-6);
\draw (\c-1,\yd-5) to [in = 180, out=0] (\c-0.2,\yd-6.3);

\draw (\c+1,\yd-5) to [in = 0, out=180] (\c-0.2,\yd-6.3);
\draw (\c+1,\yd-5) to [in = 180, out=0] (\c+1.5,\yd-6.3);

\draw [->, thick] (-5.5,-6) -- (-2,-4.5);

\def\yd{2}
\def\xd{11}

\def\sa{0.6}

\draw [thick, draw= black, fill=gray, opacity=0.05] (\sa * \xd, \sa* \yd) circle [radius=\sa * 3.2];

\draw [thick, draw=black, fill=gray, opacity=0.05]
       (\sa * \xd + \sa * 3, \sa * \yd - \sa * 5) to [bend left] (\sa * \xd -\sa * 3, \sa * \yd - \sa * 5) -- (\sa * \xd - \sa * 2,\sa * \yd -\sa * 2) -- (\sa * \xd + \sa * 2, \sa * \yd - \sa * 2) -- cycle;
       
\draw (\sa *\xd-\sa *3,\sa *\yd) to [in= 180, out = -80] (\sa *\xd-\sa *1,\sa *\yd-\sa *2);
% \draw [red] (\sa *\xd-\sa *2.5,\sa *\yd) to [in= 180, out = -70] (\sa *\xd-\sa *1,\sa *\yd-\sa *2);

\draw (\sa *\xd+\sa *2,\sa *\yd) to [in=0, out = 80] (\sa *\xd-\sa *1,\sa *\yd-\sa *2 +1.3);
\draw (\sa *\xd+\sa *2,\sa *\yd) to [in=0, out = -100] (\sa *\xd-\sa *1,\sa *\yd-\sa *2);

\draw (\sa *\xd,\sa *\yd+\sa *1) to [in= 180, out = -80] (\sa *\xd+\sa *1,\sa *\yd-\sa *2);
\draw (\sa *\xd+\sa *1.7,\sa *\yd+\sa *1) to [in= 0, out = -100] (\sa *\xd+\sa *1,\sa *\yd-\sa *2);

\draw (\sa *\xd-\sa *1,\sa *\yd-\sa *2) to [in = 0, out=180] (\sa *\xd-\sa *1.8,\sa *\yd-\sa *5);
\draw (\sa *\xd-\sa *1,\sa *\yd-\sa *2) to [in = 180, out=0] (\sa *\xd-\sa *0.2,\sa *\yd-\sa *5.3);

\draw (\sa *\xd+\sa *1,\sa *\yd-\sa *2) to [in = 0, out=180] (\sa *\xd-\sa *0.2,\sa *\yd-\sa *5.3);
\draw (\sa *\xd+\sa *1,\sa *\yd-\sa *2) to [in = 80, out=0] (\sa *\xd+\sa *1.5,\sa *\yd-\sa *5.3);

\node at (8.7,0) {$\eta'$};

\def\yd{-8}
\def\xd{11}

\def\sa{0.6}

\draw [thick, draw= black, fill=gray, opacity=0.05] (\sa * \xd, \sa* \yd) circle [radius=\sa * 3.2];

\draw [thick, draw=black, fill=gray, opacity=0.05]
       (\sa * \xd + \sa * 3, \sa * \yd - \sa * 5) to [bend left] (\sa * \xd -\sa * 3, \sa * \yd - \sa * 5) -- (\sa * \xd - \sa * 2,\sa * \yd -\sa * 2) -- (\sa * \xd + \sa * 2, \sa * \yd - \sa * 2) -- cycle;
       
\draw (\sa *\xd-\sa *3,\sa *\yd) to [in= 180, out = -80] (\sa *\xd-\sa *1,\sa *\yd-\sa *2);
\draw (\sa *\xd-\sa *2.5,\sa *\yd) to [in= 180, out = -70] (\sa *\xd-\sa *1 + 0.5,\sa *\yd-\sa *2);

\draw (\sa *\xd+\sa *2,\sa *\yd) to [in=0, out = 80] (\sa *\xd-\sa *1 + 0.5,\sa *\yd-\sa *2);
\draw (\sa *\xd+\sa *2,\sa *\yd) to [in=0, out = -100] (\sa *\xd-\sa *1,\sa *\yd-\sa *2);

\draw (\sa *\xd,\sa *\yd+\sa *1) to [in= 180, out = -80] (\sa *\xd+\sa *1,\sa *\yd-\sa *2);
\draw (\sa *\xd+\sa *1.7,\sa *\yd+\sa *1) to [in= 0, out = -100] (\sa *\xd+\sa *1,\sa *\yd-\sa *2);

\draw (\sa *\xd-\sa *1,\sa *\yd-\sa *2) to [in = 0, out=180] (\sa *\xd-\sa *1.8,\sa *\yd-\sa *5);
\draw (\sa *\xd-\sa *1+0.5,\sa *\yd-\sa *2) to [in = 180, out=0] (\sa *\xd-\sa *0.2,\sa *\yd-\sa *5.3);

\draw (\sa *\xd-\sa *1+0.5,\sa *\yd-\sa *2) to [in = 0, out=180]  (\sa *\xd-\sa *1 + 0.25,\sa *\yd-\sa *2-0.5);

\draw (\sa *\xd-\sa *1+0.25,\sa *\yd-\sa *2 -0.5) to [in = 0, out=180]  (\sa *\xd-\sa *1,\sa *\yd-\sa *2);

\draw (\sa *\xd+\sa *1,\sa *\yd-\sa *2) to [in = 0, out=180] (\sa *\xd-\sa *0.2,\sa *\yd-\sa *5.3);
\draw (\sa *\xd+\sa *1,\sa *\yd-\sa *2) to [in = 80, out=0] (\sa *\xd+\sa *1.5,\sa *\yd-\sa *5.3);

\node at (8.7,-6) {$\eta$};

\draw [->, dashed] (4.5,0) -- (3.5,-1);

\draw [->, dashed] (4.5,-6) -- (3.5,-5);

\node at (5.7,1.7) {+1 genus};

\node at (3.6,-2.5) {expansion};

\node at (1.4,-2.3) {$p_{\alpha}$};

\node at (1.4,-4.7) {$p'_{\alpha'}$};

\node at (1.4,-7.7) {$\xi_i$};

\node at (-4.1,-4.8) {$f \cong f'$};

\end{tikzpicture}
\bigskip

\textbf{Fig. D.} The "transitional" degenerate map for an upper connected operation. \marginpar{\textcolor{white}{ \begin{tiny} improve pic? \end{tiny} } }
\end{center}
\end{tiny}

\emph{Upper connected case.} First, assume that $[\eta\eta'] \in E(\Omega^s)$ corresponds to an upper connected modification. Assume that $\eta'$ is obtained from $\eta$ by identifying two edges connecting the distinguished $F$-vertex $\kappa \in V({\mathfrak F})$ and some vertex $v_0 \in V({\mathfrak P})$. Let $R$ be an indexing set with $|R(\Gamma_{\mathfrak P})| = |R(\Gamma_{\mathfrak F})|$ elements. Let $\tilde{\alpha}_1,\tilde{\alpha}_2 \in R$ be the labels of the two edges which get identified. Let $(p_\alpha)_{\alpha \in R} \in \ell$ be a general collection of closed points on $\ell$ with the restrictions $p_{\tilde{\alpha}_1} = p_{\tilde{\alpha}_2}$ and $p_\alpha \mapsto \xi_i$ under the natural isomorphism $\ell \cong L_0$ if $\nu(i) \in V({\mathfrak P})$ is a degree one vertex whose unique incident edge is precisely the one labeled $\alpha$. Denote by $\smash{ \Pi }$ the tuple of points $(p_{\alpha})_{\alpha \in R} \in \ell^r$. Let $R'$ be a corresponding indexing set for the distinguished marked points (or equivalently, edges of the dual graph) of maps in $\eta'$. The points $\smash{ p'_{\alpha'} }$ satisfy $\smash{ p'_{\rho(\alpha)} = p_\alpha }$ and the collection $\smash{ (p'_{\alpha'})_{\alpha' \in R'} \in \ell^{r'} = \ell^{r-1} }$ is denoted by $\smash{ \Pi' }$. 

Let $\smash{ (f_{\mathfrak P},f'_{\mathfrak P}) }$ be the pair of convergent doppelg\"{a}ngers constructed in Proposition \ref{UC dopelganger} and $\smash{ (f_{\mathfrak F},f'_{\mathfrak F}) }$ the pair of divergent doppelg\"{a}ngers constructed in Lemma \ref{F-dopelganger divergent}. \marginpar{\textcolor{white}{ \begin{tiny} Explain more! \end{tiny} } }On one hand, the relative stable maps $\smash{ f_{\mathfrak P} }$ of type $\smash{ \Gamma_{\mathfrak P} }$ and $\smash{ f_{\mathfrak F} }$ of type $\smash{ \Gamma_{\mathfrak F} }$ glue to a degenerate stable map 
$$ f \in \overline{\modu M}({\mathfrak P}^{\mathrm{rel}} \sqcup {\mathfrak F}^{\mathrm{rel}}, \eta |\Lambda,{\mathbf m})(\kk) = \overline{\modu M}({\mathfrak W}_0, \eta |\Lambda,{\mathbf m})(\kk). $$
On the other hand, the relative stable maps $\smash{ f'_{\mathfrak P} }$ of type $\smash{ \Gamma'_{\mathfrak P} }$ and $\smash{ f'_{\mathfrak F} }$ of type $\smash{ \Gamma'_{\mathfrak F} }$ also glue to a degenerate stable map 
$$ f' \in \overline{\modu M}({\mathfrak P}^{\mathrm{rel}} \sqcup {\mathfrak F}^{\mathrm{rel}}, \eta' |\Lambda,{\mathbf m})(\kk) = \overline{\modu M}({\mathfrak W}_0, \eta' |\Lambda,{\mathbf m})(\kk). $$
However (keeping in mind Remark \ref{branch point}), $f$ and $f'$ are isomorphic by construction, so we can regard them as the same map. Thanks to the good behavior of all $4$ pieces, we conclude that $\smash{ f \cong f' }$ belongs to both strict cores $\smash{ {\modu K}^\circ({\mathfrak P}^{\mathrm{rel}} \sqcup {\mathfrak F}^{\mathrm{rel}},\eta|\Lambda,{\mathbf m}) }$ and $\smash{ {\modu K}^\circ({\mathfrak P}^{\mathrm{rel}} \sqcup {\mathfrak F}^{\mathrm{rel}},\eta'|\Lambda,{\mathbf m}) }$, as desired. 

By the assumption in Proposition \ref{key trick}, at least one of $\smash{ \overline{\modu K}({\mathfrak P}^{\mathrm{rel}} \sqcup {\mathfrak F}^{\mathrm{rel}},\eta|\Lambda,{\mathbf m}) }$ and $\smash{ \overline{\modu K}({\mathfrak P}^{\mathrm{rel}} \sqcup {\mathfrak F}^{\mathrm{rel}},\eta'|\Lambda,{\mathbf m}) }$ is contained inside $\smash{ \overline{\modu D}_{g}({\mathfrak W}_0, d|\Lambda,{\mathbf m}) }$,\footnote{Of course, in reality they are both contained in $\smash{ \overline{\modu D}_{g}({\mathfrak W}_0, d|\Lambda,{\mathbf m}) }$, but this is the part of the argument we've chosen to simplify relying on the (logically justified) assumptions in Proposition \ref{key trick}.} hence $f \cong f'$ indeed belongs to $\smash{ \overline{\modu D}_{g}({\mathfrak W}_0, d|\Lambda,{\mathbf m}) }$, i.e. the degenerate stable map we've constructed is a limit of stable maps corresponding to nice curves (Definition \ref{nice curve}) in the isotrivial family of generalized Severi varieties.

\begin{center}
\begin{tikzpicture}
\matrix [column sep  = 8mm, row sep = 8mm] {
	\node (nnw) {$\overline{\modu S}({\mathfrak P}^\mathrm{rel}, \Gamma'_{\mathfrak P})$}; &

	\node (nw) {$\overline{\modu M}({\mathfrak P}^\mathrm{rel}, \Gamma'_{\mathfrak P})$}; &
	\node (ne) {$\overline{\modu M}({\mathfrak F}^{\mathrm{rel}},\Gamma'_{\mathfrak F}|\Lambda,{\mathbf m})$}; &
	\node (nne) {$\overline{\modu S}({\mathfrak F}^{\mathrm{rel}},\Gamma'_{\mathfrak F}|\Lambda,{\mathbf m})$}; \\ &
	\node (sw) {$\ell^{|R'|}$}; &
	\node (se) {$\ell^{|E(\kappa')|}$}; \\
};
%\draw[->, thin] (nw) -- (ne);
\draw[right hook->, thin] (nnw) -- (nw);
\draw[left hook->, thin] (nne) -- (ne);
\draw[left hook->, thin] (se) -- (sw);
\draw[->, thin] (nw) -- (sw);
\draw[->, thin] (ne) -- (sw);
\draw[->, thin] (nne) -- (se);

\node (ch) at (0,3.4) {$\overline{\modu S}({\mathfrak P}^\mathrm{rel}, \Gamma'_{\mathfrak P}) \times_{\ell^{|R'|}} \overline{\modu S}({\mathfrak F}^{\mathrm{rel}},\Gamma'_{\mathfrak F}|\Lambda,{\mathbf m})$};

\node (cl) at (0,2) {$\overline{\modu M}({\mathfrak P}^\mathrm{rel}, \Gamma'_{\mathfrak P}) \times_{\ell^{|R'|}} \overline{\modu M}({\mathfrak F}^{\mathrm{rel}},\Gamma'_{\mathfrak F}|\Lambda,{\mathbf m})$};

\draw[->, thin] (cl) -- (nw);
\draw[->, thin] (cl) -- (ne);
\draw[->, thin] (ch) to[out=180,in=90]  (nnw);
\draw[->, thin] (ch) to[out=0,in=90] (nne);
\draw[left hook->, thin] (ch) -- (cl);

\node at (-2.6,0) {${\mathbf q}'_{\mathfrak P}$};
\node at (-1.1,0) {${\mathbf q}'_{\mathfrak F}$};

\end{tikzpicture}
\end{center} 
We need to check that the sole irreducible component of $\smash{ \overline{\modu M}({\mathfrak P}^{\mathrm{rel}} \sqcup {\mathfrak F}^{\mathrm{rel}},\eta'|\Lambda,{\mathbf m}) }$ containing $\smash{ (C',f',x'_1,x'_2,...,x'_n) }$ is the core $\smash{ \overline{\modu K}({\mathfrak P}^{\mathrm{rel}} \sqcup {\mathfrak F}^{\mathrm{rel}},\eta'|\Lambda,{\mathbf m}) }$. That is, we need to check that the immersion $\smash{ \overline{\modu K}({\mathfrak P}^{\mathrm{rel}} \sqcup {\mathfrak F}^{\mathrm{rel}},\eta'|\Lambda,{\mathbf m}) \hookrightarrow \overline{\modu M}({\mathfrak P}^{\mathrm{rel}} \sqcup {\mathfrak F}^{\mathrm{rel}},\eta'|\Lambda,{\mathbf m}) }$ is not only closed, but also open near $[f']$. By Fact \ref{unimportant Li lemma}, it suffices to prove that $([f'_{\mathfrak P}],[f'_{\mathfrak F}])$ only belongs to the main irreducible component of 
$$ \overline{\modu M}({\mathfrak P}^\mathrm{rel}, \Gamma'_{\mathfrak P}) \times_{\ell^{|R'|}} \overline{\modu M}({\mathfrak F}^{\mathrm{rel}},\Gamma'_{\mathfrak F}|\Lambda,{\mathbf m}). $$
This is the unique component contained in $\smash{ \overline{\modu S}({\mathfrak P}^\mathrm{rel}, \Gamma'_{\mathfrak P}) \times_{\ell^{|R'|}} \overline{\modu S}({\mathfrak F}^{\mathrm{rel}},\Gamma'_{\mathfrak F}|\Lambda,{\mathbf m}) }$ which dominates $\smash{ \ell^{|E(\kappa')|} \subset \ell^{|R'|} }$.  Recall that $E(\kappa')$ is the set of edges (or equivalently, roots), incident to the distinguished $F$-vertex $\kappa'$. 

At the very least, it is clear that any irreducible component containing $\smash{ ([f'_{\mathfrak P}],[f'_{\mathfrak F}]) }$ must be contained in $\smash{ \overline{\modu S}({\mathfrak P}^\mathrm{rel}, \Gamma'_{\mathfrak P}) \times_{\ell^{|R'|}} \overline{\modu S}({\mathfrak F}^{\mathrm{rel}},\Gamma'_{\mathfrak F}|\Lambda,{\mathbf m}) }$ because $\smash{ [f'_{\mathfrak P}] }$ only belongs to $\smash{ \overline{\modu S}({\mathfrak P}^\mathrm{rel}, \Gamma'_{\mathfrak P}) }$ among all irreducible components of $\smash{ \overline{\modu M}({\mathfrak P}^\mathrm{rel}, \Gamma'_{\mathfrak P}) }$ by Proposition \ref{UC dopelganger} and similarly, $\smash{ [f'_{\mathfrak F}] }$ only belongs to $\smash{ \overline{\modu S}({\mathfrak F}^\mathrm{rel}, \Gamma'_{\mathfrak F}|\Lambda,{\mathbf m}) }$ among all irreducible components of $\smash{ \overline{\modu M}({\mathfrak F}^\mathrm{rel}, \Gamma'_{\mathfrak F}|\Lambda,{\mathbf m}) }$ by Lemma \ref{F-dopelganger divergent}. 

To review, we know that the two factors in the fibered product above are irreducible mapping dominantly onto the base with generically irreducible geometric fibers and that the fibers have locally the expected dimension at both $\smash{ [f'_{\mathfrak P}] }$ and $\smash{ [f'_{\mathfrak F}] }$, hence $\smash{ ([f'_{\mathfrak P}],[f'_{\mathfrak F}]) }$ necessarily lives in the main component of this fiber product and \emph{only} in it and the conclusion follows. The argument is outlined in Fact \ref{fiber fact} in the Appendix.
 
\begin{center}
\begin{tikzpicture}
\matrix [column sep  = 8mm, row sep = 8mm] {
	\node (nnw) {$\overline{\modu S}({\mathfrak P}^\mathrm{rel}, \Gamma_{\mathfrak P})$}; &

	\node (nw) {$\overline{\modu M}({\mathfrak P}^\mathrm{rel}, \Gamma_{\mathfrak P})$}; &
	\node (ne) {$\overline{\modu M}({\mathfrak F}^{\mathrm{rel}},\Gamma_{\mathfrak F}|\Lambda,{\mathbf m})$}; &
	\node (nne) {$\overline{\modu S}({\mathfrak F}^{\mathrm{rel}},\Gamma_{\mathfrak F}|\Lambda,{\mathbf m})$}; \\
	\node (ssw) {$\ell^{|R'|} = \ell^{|R|-1}$}; &
	\node (sw) {$\ell^{|R|}$}; &
	\node (se) {$\ell^{|E(\kappa)|}$}; \\
};
%\draw[->, thin] (nw) -- (ne);
\draw[right hook->, thin] (nnw) -- (nw);
\draw[right hook->, thin] (ssw) -- (sw);
\draw[left hook->, thin] (nne) -- (ne);
\draw[left hook->, thin] (se) -- (sw);
\draw[->, thin] (nw) -- (sw);
\draw[->, thin] (ne) -- (sw);
\draw[->, thin] (nne) -- (se);

\node at (-3.2,-0.5) {$\Delta_{\tilde{\alpha}_1,\tilde{\alpha}_2}$};

\node (ch) at (0,3.4) {$\overline{\modu S}({\mathfrak P}^\mathrm{rel}, \Gamma_{\mathfrak P}) \times_{\ell^{|R|}} \overline{\modu S}({\mathfrak F}^{\mathrm{rel}},\Gamma_{\mathfrak F}|\Lambda,{\mathbf m})$};

\node (cl) at (0,2) {$\overline{\modu M}({\mathfrak P}^\mathrm{rel}, \Gamma_{\mathfrak P}) \times_{\ell^{|R|}} \overline{\modu M}({\mathfrak F}^{\mathrm{rel}},\Gamma_{\mathfrak F}|\Lambda,{\mathbf m})$};

\draw[->, thin] (cl) -- (nw);
\draw[->, thin] (cl) -- (ne);
\draw[->, thin] (ch) to[out=180,in=90]  (nnw);
\draw[->, thin] (ch) to[out=0,in=90] (nne);
\draw[left hook->, thin] (ch) -- (cl);

\node at (-2.6,0) {${\mathbf q}_{\mathfrak P}$};
\node at (-1.1,0) {${\mathbf q}_{\mathfrak F}$};

\end{tikzpicture}
\end{center} 

We need to check the analogous statement with $\eta$ in place of $\eta'$, i.e. that the only irreducible component of $\smash{ \overline{\modu M}({\mathfrak P}^{\mathrm{rel}} \sqcup {\mathfrak F}^{\mathrm{rel}},\eta|\Lambda,{\mathbf m}) }$ containing $[f]$ is the main component $\smash{ \overline{\modu K}({\mathfrak P}^{\mathrm{rel}} \sqcup {\mathfrak F}^{\mathrm{rel}},\eta|\Lambda,{\mathbf m}) }$. This is ultimately very similar to the case of $\eta'$, with the additional feature that the fiber of $\smash{ {\mathbf q}_\eta }$ containing $[f]$ needs to satisfy the non-generic condition $\smash{ p_{\tilde{\alpha}_1} = p_{\tilde{\alpha}_2} }$. However, all the relevant fiber products are still over $\smash{ \ell^{|R|} }$ rather than the diagonal and it turns out that this complication is completely irrelevant. The argument in the previous case goes through essentially word for word after erasing the "$'$s". 

\emph{Upper disconnected case.} This is essentially identical to the previous case.

\emph{Lower disconnected case.} This is similar to the upper connected case as well. Instead of \ref{UC dopelganger} and \ref{F-dopelganger divergent}, we will need to use \ref{plane divergent doppelganger} and \ref{F-dopelganger convergent}. This is left to the reader.
\end{proof}

\begin{sublem}[{\cite[Lemma 3.4]{[Li02]}}]\label{key cite} There exists a pair $({\mathbf L}_\eta,s_\eta)$ consisting of a line bundle on $\smash{ \overline{\modu M}_{g,n}({\mathfrak W},d) }$ and a section vanishing topologically precisely on the virtual component $\smash{ \overline{\modu M}({\mathfrak P}^\mathrm{rel} \sqcup {\mathfrak F}^\mathrm{rel},\eta) }$.

\end{sublem}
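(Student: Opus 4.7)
Since this is quoted verbatim from \cite[Lemma 3.4]{[Li02]}, the plan is simply to invoke the cited result. Let me briefly describe the geometric idea behind its construction, since this motivates how the lemma is used elsewhere.

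The underlying statement is essentially local in nature and comes from the versal deformation of a node. At a point $[f]$ in the central fiber lying on the virtual component $\smash{ \overline{\modu M}({\mathfrak P}^\mathrm{rel} \sqcup {\mathfrak F}^\mathrm{rel}, \eta) }$, the target degeneration parameter is the pullback $t$ of the coordinate on ${\mathbb A}^1$, while each node $q_e$ of the source mapping to the node $\ell \subset W_0$ with multiplicity $\mu(e)$ contributes a smoothing parameter $\tau_e$. The fundamental relation forced by the local form of the map (\'{e}tale locally, the source node is $xy = \tau_e$, the target node is $uv = t$, and the map sends $u \mapsto x^{\mu(e)}$, $v \mapsto y^{\mu(e)}$) is
\[ \tau_e^{\mu(e)} = t \cdot (\text{unit}) \]
for every $e \in E(G)$. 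Consequently, in an \'{e}tale neighborhood of $[f]$, the moduli stack looks like the fiber product $\prod_e \{ \tau_e^{\mu(e)} = t \}$ over the $t$-line, and the virtual component of profile $\eta$ is the common vanishing locus of all the $\tau_e$.

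To globalize, the $\tau_e$ are interpreted as sections of line bundles ${\mathbf T}_e$ on $\smash{ \overline{\modu M}_{g,n}({\mathfrak W},d) }$ coming from the tensor product of the tangent-line sheaves of the universal source curve at the two branches of the nodal section labeled $e$, while $t$ is a section of a line bundle ${\mathbf T}$ on $\smash{ \overline{\modu M}_{g,n}({\mathfrak W},d) }$ pulled back from ${\mathbb A}^1$. The local relations $\tau_e^{\mu(e)} = t$ promote to a morphism of line bundles ${\mathbf T}_e^{\otimes \mu(e)} \to {\mathbf T}$, and combining these over $e \in E(G)$ according to the combinatorial data of $\eta$ produces the desired pair $({\mathbf L}_\eta, s_\eta)$, whose set-theoretic zero locus is precisely $\smash{ \overline{\modu M}({\mathfrak P}^\mathrm{rel} \sqcup {\mathfrak F}^\mathrm{rel}, \eta) }$. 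The main technical obstacle, which Li handles via the formalism of the stack of expansions ${\mathfrak W}$, is ensuring that these constructions patch coherently across the Artin stack and tracking the nonreduced scheme structures on virtual components (the latter is important in enumerative applications but, as emphasized in the text preceding the sublemma, not in our use). The topological conclusion needed here is then a direct consequence of the local analysis outlined above.
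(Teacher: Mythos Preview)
Your proposal is correct and matches the paper's treatment: the paper does not prove this statement at all but simply records it as a Fact cited from \cite[Lemma 3.4]{[Li02]}, exactly as you do. Your additional sketch of the local model $\tau_e^{\mu(e)} = t$ coming from the versal deformation of a node is accurate and is precisely the intuition the paper alludes to in the introduction when it says this observation ``goes back to the versal deformation space of a node.''
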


\begin{proof}[Proof of Proposition \ref{key trick}.] To clarify notation, $\smash{ \eta }$ and $\smash{ \eta' }$ are completely interchangeable -- we're not imposing the asymmetry inherent in \S3.2. Assume that $\smash{\modu X}$ contains $\smash{ \overline{\modu K}({\mathfrak P}^\mathrm{rel} \sqcup {\mathfrak F}^\mathrm{rel},\eta'|\Lambda,{\mathbf m}) }$ topologically. 

Let $\smash{ ({\mathbf L}_\eta,s_\eta) }$ be the pair consisting of a line bundle on $\smash{ \overline{\modu M}_{g,n}({\mathfrak W},d) }$ and a section vanishing topologically precisely on $\smash{ \overline{\modu M}({\mathfrak W}_0,\eta) }$. Let $\smash{ {\mathbf L}_\eta^{\modu X} }$ and $\smash{ s_\eta^{\modu X} }$ be the restrictions to $\smash{\modu X}$ of $\smash{ {\mathbf L}_\eta }$ and $\smash{ s_\eta }$. By definition, the section $\smash{ s_\eta^{\modu X} }$ vanishes at the point $\smash{ [f] }$ corresponding to the degenerate stable map constructed in Lemma \ref{two boats}. Locally, it must vanish along a substack of dimension at least $\smash{ \dim {\modu X} - 1 = 2d+g-1 }$. In other words, the intersection of $\smash{\modu X}$ with $\smash{ \overline{\modu M}({\mathfrak W}_0,\eta|\Lambda,{\mathbf m}) }$ has dimension at least $2d+g-1$ locally near $\smash{ [f] }$. However, since $\smash{ [f] }$ belongs to no other irreducible component of $\smash{ \overline{\modu M}({\mathfrak P}^\mathrm{rel} \sqcup {\mathfrak F}^\mathrm{rel},\eta|\Lambda,{\mathbf m}) }$ except $\smash{ \overline{\modu K}({\mathfrak P}^\mathrm{rel} \sqcup {\mathfrak F}^\mathrm{rel},\eta|\Lambda,{\mathbf m}) }$, which is irreducible of dimension $2d+g-1$, we conclude that ${\modu X}$ must contain $\smash{ \overline{\modu K}({\mathfrak P}^\mathrm{rel} \sqcup {\mathfrak F}^\mathrm{rel},\eta|\Lambda,{\mathbf m}) }$ as well (in the topological sense), completing the proof. \end{proof}

\subsection{Smooth points of the central fiber} In this subsection, we will provide the final ingredient by showing that at least some points in the central fiber of the moduli family are smooth in the total space. Very roughly, the general philosophy is that the multiplicity of a certain component should equal the product of the weights at the distinguished marked points and, translated into our language, ~\cite[Theorem 1.3]{[CH98]} says precisely that. However, because the translation process is awkward and because we don't need the full strength of the Caporaso--Harris theorem, we will instead give an improvised direct proof of the weaker statement we require.

\begin{prop}\label{multiplicity one}
There exists a profile $\eta_0 \in \Omega^s$ such that $\smash{ \overline{\modu M}_{g}({\mathfrak W},d|\Lambda,{\mathbf m}) }$ is smooth at some point $\smash{ [f] \in \overline{\modu K}({\mathfrak P}^\mathrm{rel} \sqcup {\mathfrak F}^\mathrm{rel},\eta_0|\Lambda,{\mathbf m})(\kk) }$.
\end{prop}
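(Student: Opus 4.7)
The plan is to exhibit a small profile $\eta_0 \in \Omega^s$ whose bipartite graph has every edge of weight $1$, and then to verify smoothness of $\smash{ \overline{\modu M}_g({\mathfrak W},d|\Lambda,{\mathbf m}) }$ at a generic point of the strict core by a direct local computation. The hint preceding the statement suggests that the multiplicity of a virtual component should equal the product of the weights at the distinguished marked points, so the idea is to choose $\eta_0$ making this product equal to $1$, and then to read off smoothness of the total space from a standard unobstructedness argument.

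For the construction I would take $V({\mathfrak F}) = \{\kappa\}$, so that all $n$ ordinary marked points lie on the single $F$-vertex $\kappa$, whose image is a smooth rational curve of class $L_0 + (d-1)f$ in ${\mathbb F}_1$ tangent to $L_0$ at each $p_i$ with multiplicity $m_i$. The tangency conditions impose $\sum m_i = d$ conditions on $H^0({\mathbb F}_1,{\mathcal O}(L_0 + (d-1)f))$, leaving a positive-dimensional linear system; a generic member is smooth by a Bertini-type argument for general $\Lambda$. Every edge of $\eta_0$ is then mobile of weight $1$ and $|E(G)| = d-1$, and the topological constraint (\ref{topological requirement}) reads
\begin{equation*}
\sum_{v \in V({\mathfrak P})} g(v) = g - d + 1 + |V({\mathfrak P})|,
\end{equation*}
which is easily realized by a greedy distribution of $|V({\mathfrak P})|$, degrees $d_{\mathfrak P}(v)$ summing to $d-1$, and genera $g(v) \le \binom{d_{\mathfrak P}(v)-1}{2}$ for every admissible quadruple $(d,g,n,{\mathbf m})$.

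With $\eta_0$ so chosen, the inductive assumption (\ref{inductive assumption}) and Theorem \ref{general thm} guarantee that the strict core of $\smash{ \overline{\modu S}({\mathfrak P}^{\mathrm{rel}} \sqcup {\mathfrak F}^{\mathrm{rel}},\eta_0|\Lambda,{\mathbf m}) }$ is nonempty and irreducible. I would pick a generic point $[f]$ in it, so that each restriction of $f_{\mathfrak P}$ to a $P$-component $C_v$ is a nice curve (Definition \ref{nice curve}) and $f_{\mathfrak F}$ is an isomorphism onto the smooth $\kappa$-image. Following Li's theory of degenerate relative stable maps \cite{[Li01], [Li02]}, an \'etale neighborhood of $[f]$ in $\smash{ \overline{\modu M}_g({\mathfrak W},d|\Lambda,{\mathbf m}) }$ is locally the product of three pieces: (i) deformations of $f_{\mathfrak P}|_{C_v}$ and of $f_{\mathfrak F}$ with the components of $W_0$ held fixed, each smooth of the expected dimension by the deformation-theoretic computation underlying Theorem \ref{general thm}; (ii) a smoothing parameter $s_e$ for each of the $d-1$ source nodes lying over a distinguished marked point, satisfying $s_e^{\mu(e)} = t$ with $t$ the parameter on ${\mathbb A}^1$; and (iii) the order-$m_i$ vanishing conditions at the ordinary marked points $x_i$, which cut out a smooth linear subspace of codimension $\sum m_i$. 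Since every $\mu(e)$ equals $1$, each relation in (ii) reads $s_e = t$, so the smoothing parameters are eliminated in favor of $t$ and the tangent space at $[f]$ has dimension exactly the expected $2d+g$. Consequently $\smash{ \overline{\modu M}_g({\mathfrak W},d|\Lambda,{\mathbf m}) }$ is smooth at $[f]$, which belongs to $\overline{\modu K}({\mathfrak P}^{\mathrm{rel}} \sqcup {\mathfrak F}^{\mathrm{rel}},\eta_0|\Lambda,{\mathbf m})$ by construction.

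The delicate point is the construction step: one must check, for every admissible $(d,g,n,{\mathbf m})$, the combinatorial existence of an all-weight-one profile obeying the genus bound (\ref{genus bound}) together with the realizability of a smooth $\kappa$-curve with the prescribed tangency profile to $L_0$. This reduces to elementary combinatorics and a routine Bertini argument, with no fundamentally new ingredient beyond those already introduced in \S2.
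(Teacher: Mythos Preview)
Your choice of profile coincides with the paper's: $\eta_0$ has $|V({\mathfrak F})|=1$ and $\mu\equiv 1$, and you correctly identify that the relevant multiplicity heuristic predicts smoothness precisely in this case. Your argument for smoothness, however, follows a genuinely different route. You invoke Li's local model for the node-smoothing directions, namely the relations $s_e^{\mu(e)}=t$ over the base parameter, and observe that when every $\mu(e)=1$ this locus is smooth; smoothness of the total space then reduces to unobstructedness of the individual relative pieces $f_{\mathfrak P}|_{C_v}$ and $f_{\mathfrak F}$, which you import from the deformation theory underlying Theorem~\ref{general thm} together with a linear-system argument on the $F$-side. The paper instead sidesteps Li's local description entirely: because $\mu\equiv 1$ makes $f$ unramified, it replaces $\overline{\modu M}_g({\mathfrak W},d|\Lambda,{\mathbf m})$ \'etale-locally by the space of \emph{ordinary} stable maps into the threefold $W$, and then carries out an explicit normal-sheaf computation on $C=C_\kappa\cup\bigcup_v C_v$, showing via repeated elementary modifications and splitting of extension classes that $h^0$ of the relevant twist equals $2d+g$.

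Your approach is cleaner conceptually and makes transparent \emph{why} all-weight-one is the right choice, but as written it leans on a local product description that is not stated in quite this form in \cite{[Li01],[Li02]}; you would need either to extract it carefully from the predeformability discussion there or to cite a later source where the model $\{s_e^{\mu_e}=t\}\times(\text{smooth})$ is made explicit. You should also be more careful with your item (iii): the $L_0$-tangency conditions are not a separate factor of a product but rather conditions imposed on the already-assembled deformation space, and one must check they are transverse to the smoothing direction as well as to the fiberwise deformations. The paper's computation, while longer, is self-contained and handles the $L_0$-contact via the divisor $D$ directly in the sheaf ${\sh F}$, avoiding both of these issues.
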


\begin{proof}
We claim that any small profile $\eta_0 \in \Omega^s$ such that $\mu \equiv 1$ and $|V({\mathfrak F})| = 1$ satisfies the desired property. It is straightforward to check that such profiles exist. A general element $f$ of $\smash{ \overline{\modu K}({\mathfrak P}^\mathrm{rel} \sqcup {\mathfrak F}^\mathrm{rel},\eta_0|\Lambda,{\mathbf m}) }$ has source $\smash{ C = C_\kappa \cup \bigcup_{v \in V({\mathfrak P})} C_v }$ glued appropriately, with $\smash{ C_\kappa }$ mapping into $F$ and the $\smash{ C_v }$ mapping into $P$. The restriction of $f$ to $\smash{ C_v }$ is simply the composition of the normalization map with the inclusion into $P$, while the restriction of $f$ to $\smash{ C_\kappa }$ is just a closed embedding. We will sometimes abuse notation for normal and conormal sheaves, specifying the curves but not the maps. Note that $f$ is an unramified map, so describing the first order deformations of $f$ should be fairly straightforward. Our improvised approach is simply to study the first order deformations of $f$ as a (standard) stable map into $W$. Of course, without the assumptions that $\mu \equiv 1$ and that $f$ is sufficiently nice, this would be completely wrong. It is a straightforward technical exercise to verify that the moduli spaces
$$ \overline{\modu M}_{g}({\mathfrak W},d|\Lambda,{\mathbf m}) \text{ and } \overline{\modu M}_{g}(W,d|\Lambda,{\mathbf m}) $$
are isomorphic \'{e}tale locally near $f$, where $\smash{ \overline{\modu M}_{g}(W,d|\Lambda,{\mathbf m}) }$ is the closed substack of ${ \overline{\modu M}_{g,n}(W,d) }$ parametrizing maps which send the $i$th marked point to $p_i$ and such that the pullback of $1 \in \mathrm{H}^0({\sh O}_{W}(L \times {\mathbb A}^1))$ to the source of the map vanishes to order at least $m_i$ at the $i$th marked point.

We will regard the divisor $D = \sum m_ip_i$ as a closed subscheme of the source curve of $f$. The vector space of first order deformations of $f$ as an unmarked stable map into $W$, i.e. as a $\kk$-point of $\smash{ \overline{\modu M}_{g,0}(W,d) }$, is naturally the space of global sections of the lci normal sheaf ${\sh N}_f$. The first order deformations of $f$ in $\smash{ \overline{\modu M}_{g}(W,d|\Lambda,{\mathbf m}) }$ are naturally identified with $\mathrm{H}^0({\sh F})$, where ${\sh F} \subset {\sh N}_f$ is the kernel of the ${\sh O}_C$-module homomorphism
\begin{equation}\label{(5.1)}
{\sh N}_f \longrightarrow \iota_* {\sh O}_D
\end{equation}
obtained by contracting an arbitrary section $\sigma \in {\sh N}_f(U)$ with the pullback of the differential form $\mathrm{d}t \in \Omega^1\mathrm{Spec}({\kk}[t])$. Note that the contraction is well defined because the tangent bundle of $C$ is mapped by (the derivative of) $f$ into the central fiber, where $\mathrm{d}t$ vanishes identically.

We have to compute $h^0({\sh F})$. Fix an arbitrary $v \in V({\mathfrak P})$. Note that the restriction of $\smash{ {\sh N}^\vee_f }$ and thus also of $\smash{ {\sh F}^\vee }$ to $\smash{ C_v }$ is isomorphic to the elementary modification of the rank $2$ bundle $\smash{ {\sh N}^\vee_{C_v/W} }$ at the distinguished marked points living on $\smash{C_v}$, in the direction of the (cosets of the) conormal lines of $\smash{C_\kappa}$ at the respective points, i.e. the kernel of the ${\sh O}_{C_v}$-module homomorphism
\begin{equation}\label{elem.mod.1} 
{\sh N}^\vee_{C_v/W} \longrightarrow \iota_* {\sh O}_{C_v \cap \ell}
\end{equation}
obtained by contracting with (arbitrary nonzero) vectors tangent to $C_\kappa$ at all points $C_v \cap \ell$. Note that the map (\ref{elem.mod.1}) restricts to a surjective ${\sh O}_{C_v}$-module homomorphism ${\sh N}^\vee_{P/W}|_{C_v} \to \iota_* {\sh O}_{C_v \cap \ell}$ whose kernel is $\smash{ {\sh N}_{P/W}^\vee|_{C_v} \otimes {\sh I}_{C_v \cap \ell/C_v} }$, which is isomorphic to the structure sheaf $\smash{ {\sh O}_{C_v} }$ by adjunction. Applying the snake lemma to the diagram
\begin{center}
\begin{tikzpicture}
\matrix [column sep  = 8mm, row sep = 6mm] {

	\node (nww) {$0$}; &
	\node (nw) {${\sh N}_{P/W}^\vee|_{C_v}$}; &
	\node (nc) {${\sh N}^\vee_{C_v/W}$}; &
	\node (ne) {${\sh N}^\vee_{C_v/P}$};  &
	\node (nee) {$0$}; \\
	\node (sww) {$0$}; &
	\node (sw) {$\displaystyle{ \iota_* {\sh O}_{C_v \cap \ell} }$}; &
	\node (sc) {$\displaystyle{ \iota_* {\sh O}_{C_v \cap \ell} }$}; &
	\node (se) {$\displaystyle{ 0 }$}; \\
};

\draw[->, thin] (nww) -- (nw);
\draw[->, thin] (nw) -- (nc);
\draw[->, thin] (nc) -- (ne);
\draw[->, thin] (ne) -- (nee);

\draw[->, thin] (sww) -- (sw);
\draw[double equal sign distance, thin] (sw) -- (sc);
\draw[->, thin] (sc) -- (se);

\draw[->, thin] (nw) -- (sw);
\draw[->, thin] (nc) -- (sc);
\draw[->, thin] (ne) -- (se);

\node at (0.6,0) {(\ref{elem.mod.1})};

\end{tikzpicture}
\end{center} 
in which the top row is the conormal sheaf sequence for $C_v \to P \subset W$, we obtain the sequence 
\begin{equation}
\label{(5.2)} 0 \longrightarrow {\sh O}_{C_v} \longrightarrow {\sh F}^\vee|_{C_v} \longrightarrow {\sh N}^\vee_{C_v/P} \longrightarrow 0. 
\end{equation}
From the short exact sequence for the normal bundle of $C_v \to P$, we compute ${\sh N}^\vee_{C_v/P} \cong \omega^\vee_{C_v}(-3)$. Note that 
$$ \mathrm{Ext}^1({\sh T}_{C_v}(-3), {\sh O}_{C_v}) = \mathrm{H}^1(\omega_{C_v}(3)) = 0 $$ 
by Serre duality, so (\ref{(5.2)}) splits. In conclusion, the restriction of $\smash{ {\sh F} }$ to $C_v$ is isomorphic to ${\sh O}_{C_v} \oplus \omega_{C_v}(3)$. Moreover, the fiber of the maximal destabilizing subsheaf $\omega_{C_v}(3)$ at any $p \in C_v \cap \ell$ can be described as the kernel of the map ${\sh F} \otimes {\kk}_p = {\sh N}_f \otimes {\kk}_p \to {\sh N}_{P \cup F/W} \otimes {\kk}_p$.

Similarly, the restriction of $\smash{ {\sh N}_f^\vee }$ to $C_\kappa$ is isomorphic to the kernel of the homomorphism
\begin{equation}\label{elem.mod.2} 
{\sh N}^\vee_{C_\kappa/W} \longrightarrow \iota_* {\sh O}_{C_\kappa \cap \ell}
\end{equation}
obtained by contracting with vectors tangent to $C_v$ at all points $C_\kappa \cap \ell$, over all $v \in V({\mathfrak P})$. Essentially by repeating the argument in the paragraph above, we obtain a short exact sequence analogous to (\ref{(5.2)})
\begin{equation}\label{(5.3)} 
0 \longrightarrow {\sh O}_{C_\kappa} \longrightarrow {\sh N}_f^\vee|_{C_\kappa} \longrightarrow {\sh N}^\vee_{C_\kappa/F} \longrightarrow 0. 
\end{equation}
and by yet another extension group computation based on Kodaira--Serre duality, once again we see that the short exact sequence splits. Moreover, ${\sh N}_{C_\kappa/F} \cong \omega_{C_\kappa} \otimes f_{\kappa}^* \omega_F^\vee$, so in conclusion ${\sh N}_f|_{C_\kappa} \cong {\sh O}_{C_\kappa} \oplus \omega_{C_\kappa} \otimes f_{\kappa}^* \omega_F^\vee$. However, on this occasion, we also need to factor in the contact with $L_0$ along $D$. To do this, we resort once more to the pattern involving the snake lemma:
\begin{center}
\begin{tikzpicture}
\matrix [column sep  = 8mm, row sep = 6mm] {

	\node (nww) {$0$}; &
	\node (nw) {${\sh N}_{C_\kappa/F}$}; &
	\node (nc) {${\sh N}_f|_{C_\kappa}$}; &
	\node (ne) {${\sh O}_{C_\kappa}$};  &
	\node (nee) {$0$}; \\
	\node (sww) {$0$}; &
	\node (sw) {$\displaystyle{ \iota_* {\sh O}_D }$}; &
	\node (sc) {$\displaystyle{ \iota_* {\sh O}_D }$}; &
	\node (se) {$\displaystyle{ 0 }$}; \\
};

\draw[->, thin] (nww) -- (nw);
\draw[->, thin] (nw) -- (nc);
\draw[->, thin] (nc) -- (ne);
\draw[->, thin] (ne) -- (nee);

\draw[->, thin] (sww) -- (sw);
\draw[double equal sign distance, thin] (sw) -- (sc);
\draw[->, thin] (sc) -- (se);

\draw[->, thin] (nw) -- (sw);
\draw[->, thin] (nc) -- (sc);
\draw[->, thin] (ne) -- (se);

\node at (0.6,0) {(\ref{(5.1)})};

\end{tikzpicture}
\end{center} 
in which the top row is obviously the dual of (\ref{(5.3)}). We obtain a short exact sequence
\begin{equation*}
0 \longrightarrow {\sh N}_{C_\kappa/F}(-D) \longrightarrow {\sh F}|_{C_\kappa} \longrightarrow {\sh O}_{C_\kappa}  \longrightarrow 0. 
\end{equation*}
By Serre duality again, $\mathrm{Ext}^1({\sh O}_{C_\kappa},\omega_{C_\kappa}(-D) \otimes f_{\kappa}^* \omega_F^\vee) = \mathrm{H}^0(f_\kappa^*\omega_F(D)) = 0$ since $f_\kappa^*\omega_F(D)$ has strictly negative degree $-d-1$. In conclusion, the restriction of ${\sh F}$ to the curve $C_{\kappa}$ is isomorphic to the direct sum ${\sh O}_{C_\kappa} \oplus \omega_{C_\kappa}(-D) \otimes f_{\kappa}^* \omega_F^\vee$. Again, the fiber of the maximal destabilizing subsheaf $\omega_{C_\kappa}(-D) \otimes f_{\kappa}^* \omega_F^\vee$ at any $p \in C_\kappa \cap \ell$ is the kernel of the map ${\sh F} \otimes {\kk}_p \cong {\sh N}_f \otimes {\kk}_p \to {\sh N}_{P \cup F/W} \otimes {\kk}_p$. At this point we're almost done, so we isolate the essential final ingredient.

\begin{sublem}\label{final fact}
Let ${\sh F}$ be a rank $2$ locally free sheaf on a semistable reducible curve $C$ whose irreducible components are all smooth. Assume that for any irreducible component $Y$ of $C$,
$$ {\sh F}|_Y \cong {\sh O_Y} \oplus {\sh L}_Y, $$
where $ {\sh L}_Y \in \mathrm{Pic}(Y)$ is a line bundle of degree
$$ \deg {\sh L}_Y > \deg \omega_Y + |Y \cap (C \backslash Y)|. $$
Furthermore, assume that for any node $p = Y_1 \cap Y_2$ of $C$, the $p$-fiber of the maximal destabilizing subsheaf ${\sh L}_{Y_1}$ of ${\sh F}|_{Y_1}$ coincides with the $p$-fiber of the maximal destabilizing subsheaf ${\sh L}_{Y_2}$ of ${\sh F}|_{Y_2}$. Then
\begin{equation}\label{h0 form}
h^0({\sh F}) = 1 + \sum_{Y} h^0({\sh L}_Y) - \text{number of nodes of }C.
\end{equation}
\end{sublem}

\begin{proof}
The proof is straightforward. The second condition trivially implies that the maximal destabilizing subsheaves glue to a line bundle ${\sh L}$ and then a formal argument similar to the first half of the proof of Proposition \ref{UC dopelganger} shows that $h^0({\sh L})$ is indeed one less the right hand side of (\ref{h0 form}), as desired.
\end{proof}

Using Fact \ref{final fact}, we conclude that
\begin{equation*}
\begin{aligned}
h^0({\sh F}) &= 1 + \sum_{v \in V({\mathfrak P})} h^0(\omega_{C_v}(3)) + h^0( \omega_{C_\kappa}(-D) \otimes f_\kappa^*\omega^\vee_F ) - (d-1) \\
&= 1 + \sum_{v \in V({\mathfrak P})} (1 + 3d_{\mathfrak P}(v) + 2g(v)-2 - g(v) ) + d - (d-1) \\
&= 1+ 3(d-1) + \sum_{v \in V({\mathfrak P})} (g(v) - 1) + 1 = 2d+g \\
\end{aligned}
\end{equation*}
by (\ref{topological requirement}), so the space of first order deformations of $f$ as a map in $\overline{\modu M}_{g}(W,d|\Lambda,{\mathbf m})$ has dimension $2d+g$. By the discussion in the beginning of the proof, the same will hold for the space of first order deformations of $f$ as a map in $\overline{\modu M}_{g}({\mathfrak W},d|\Lambda,{\mathbf m})$, which is the expected dimension. Hence $\smash{\overline{\modu M}_{g}({\mathfrak W},d|\Lambda,{\mathbf m}) }$ has the expected dimension at $f$, as desired.
\end{proof}

Note that $h^1({\sh F}) \neq 0$, so $f$ is actually obstructed as an element of $\overline{\modu M}_{g}(W,d|\Lambda,{\mathbf m})$. However, this is merely means that the obstruction space is "incorrect" which is a symptom of the fact that the improvised method we used is quite artificial.

\subsection{The inductive procedure} In this final subsection, we put all the ingredients together and conclude the proof of the Theorem \ref{main theorem}.

\begin{proof}[Proof of Theorem \ref{main theorem}] Recall the ongoing inductive assumption (\ref{inductive assumption}) that \ref{main theorem} is true for all $d'<d$. By the observation after Lemma \ref{early def th}, it suffices to prove the case when all contact points are fixed, i.e. we need to prove that $\smash{ \overline{V}_{d,g}(\Lambda, {\mathbf m}) }$ is irreducible. Consider again $\overline{\modu D}_{g}({\mathfrak W},d|\Lambda,{\mathbf m}) \to {\mathbb A}^1$, the total space of the degeneration of the moduli space. Recall that all the cores $\smash{ \overline{\modu K}({\mathfrak P}^\mathrm{rel} \sqcup {\mathfrak F}^\mathrm{rel},\eta|\Lambda,{\mathbf m}) }$ of the virtual components in the small landscape are irreducible of dimension $2d+g-1$.

Let ${\modu X}$ be an irreducible component of $\smash{ \overline{\modu D}_{g}({\mathfrak W},d|\Lambda,{\mathbf m}) }$. Let $\eta_0$ be the small topological profile satisfying the condition in Proposition \ref{multiplicity one}. We claim that ${\modu X}$ contains $\smash{ \overline{\modu K}({\mathfrak P}^\mathrm{rel} \sqcup {\mathfrak F}^\mathrm{rel},\eta_0|\Lambda,{\mathbf m}) }$. By Corollary \ref{CH corollary} and the statement in the paragraph above, there exists a small topological profile $\eta_1$ such that $\smash{ \overline{\modu K}({\mathfrak P}^\mathrm{rel} \sqcup {\mathfrak F}^\mathrm{rel},\eta_1|\Lambda,{\mathbf m}) }$ is contained topologically in ${\modu X}$. Proposition \ref{landscape is connected} ensures that there exists a sequence $\eta_1,\eta_2,...,\eta_M=\eta_0$ of small topological profiles such that $[\eta_i\eta_{i+1}] \in E(\Omega^s)$, for all $i<M$. Inductively, by Proposition \ref{key trick}, ${\modu X}$ contains $\smash{ \overline{\modu K}({\mathfrak P}^\mathrm{rel} \sqcup {\mathfrak F}^\mathrm{rel},\eta_i|\Lambda,{\mathbf m}) }$, for all $i \leq M$. Thus ${\modu X}$ contains $\smash{ \overline{\modu K}({\mathfrak P}^\mathrm{rel} \sqcup {\mathfrak F}^\mathrm{rel},\eta_0|\Lambda,{\mathbf m}) }$.

If ${\modu X}'$ is another irreducible component of $\smash{ \overline{\modu S}_{g}({\mathfrak W},d|\Lambda,{\mathbf m}) }$, then by the same reasoning ${\modu X}'$ also contains $\smash{ \overline{\modu K}({\mathfrak P}^\mathrm{rel} \sqcup {\mathfrak F}^\mathrm{rel},\eta_0|\Lambda,{\mathbf m}) }$ and Proposition \ref{multiplicity one} implies that ${\modu X} = {\modu X}'$. Thus $\smash{ \overline{\modu D}_{g}({\mathfrak W},d|\Lambda,{\mathbf m}) }$ is irreducible and hence so is $\smash{ \overline{\modu S}_{g}({\mathbb P}^2,d|\Lambda,\mathbf{m}) }$, completing the inductive proof. \end{proof}

\section{Appendix}

We state the two elementary results used in \S5.1 and the proof of Lemma \ref{two boats}. We continue using the convention implicit in the rest of the paper that schemes or varieties are denoted by usual Roman letters, while Deligne-Mumford stacks are denoted by calligraphic Roman letters. 

\begin{sublem}\label{general fact}
Let $\smash{ {\modu Z} \stackrel{g}{\to} Y \stackrel{f}{\to} X \stackrel{\xi}{\to} \mathrm{Spec}(\kk)}$ such that all morphisms are proper and of finite type, $X$ and $Y$ are smooth and irreducible, $f$ is smooth and all its geometric fibers are irreducible. Assume that ${\modu Z}$ is irreducible and that the geometric fibers of $g$ are generically irreducible. Then the geometric fibers of $f \circ g$ are generically irreducible. 
\end{sublem}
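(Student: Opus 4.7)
The plan is to show that the hypotheses force $k(X)$ to be algebraically closed inside $k({\modu Z})$. By the standard criterion in characteristic zero (the number of top-dimensional irreducible components of the geometric generic fiber of a dominant morphism $A \to B$ between irreducible reduced objects equals the degree over $k(B)$ of the algebraic closure of $k(B)$ in $k(A)$), this is equivalent to the geometric generic fiber of $f \circ g$ having a unique top-dimensional irreducible component. Upper semicontinuity of the number of top-dimensional components of fibers of a proper morphism then propagates this generic irreducibility from the generic geometric fiber to all geometric fibers over a dense open of $X$.

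We may assume $f \circ g$ is dominant (otherwise the conclusion holds vacuously over the complement of its closed image in $X$), so $f$ and $g$ are both dominant and we have a tower of function-field inclusions $k(X) \subset k(Y) \subset k({\modu Z})$.

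First, since $f$ is smooth and proper with geometrically irreducible fibers, its geometric generic fiber is integral, which is equivalent to $k(X)$ being algebraically closed in $k(Y)$. Second, the irreducibility of ${\modu Z}$ together with dominance of $g$ implies that the scheme-theoretic generic fiber of $g$ is irreducible with function field $k({\modu Z})$, since the unique generic point of ${\modu Z}$ lies in and dominates that fiber. The hypothesis that the geometric fibers of $g$ are generically irreducible passes, by upper semicontinuity, to the geometric generic fiber; decomposing $k({\modu Z}) \otimes_{k(Y)} \overline{k(Y)}$ as a product of fields (one per top-dimensional component) then forces $k(Y)$ to be algebraically closed in $k({\modu Z})$.

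The final step is pure field theory: transitivity of algebraic closure in a tower immediately gives that $k(X)$ is algebraically closed in $k({\modu Z})$, so the criterion above finishes the proof. I do not anticipate any serious obstacle; the only technicality is to verify that the function-field and semicontinuity arguments remain valid for the DM stack ${\modu Z}$, which is routine after passing to a dense open scheme atlas.
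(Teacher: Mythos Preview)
Your proof is correct and takes a different route from the paper's. The paper works directly at a general closed point $x \in X(\kk)$: because the generic point of the irreducible fiber $f^{-1}(x)$ lies in the open locus of $Y$ where the geometric fibers of $g$ are irreducible, there is exactly one irreducible component of $(f\circ g)^{-1}(x)$ dominating $f^{-1}(x)$; then a dimension argument, using only that ${\modu Z}$ is irreducible, shows that for general $x$ \emph{every} component of $(f\circ g)^{-1}(x)$ dominates $f^{-1}(x)$, so there is only one. Your argument instead passes to function fields, invoking the characteristic-zero criterion that the geometric generic fiber of a dominant morphism $A \to B$ of integral objects is irreducible iff $k(B)$ is algebraically closed in $k(A)$, and then uses transitivity in the tower $k(X) \subset k(Y) \subset k({\modu Z})$. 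The paper's argument is more hands-on and avoids the function-field criterion and constructibility results; yours is slicker and makes the role of each hypothesis transparent (irreducibility of ${\modu Z}$ gives a single function field, and the fiber hypotheses on $f$ and $g$ each contribute one step of algebraic closure). One terminological quibble: what you call ``upper semicontinuity'' in both directions is really constructibility of the locus of geometrically irreducible fibers (EGA IV, \S9.7); the two passages---from general closed fibers to the geometric generic fiber and back---both follow from the fact that a constructible set contains the generic point iff it contains a dense open, not from any semicontinuity of the number of components.
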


\begin{proof}[Sketch of proof.]
Let $x \in X(\kk)$ general. The condition that the geometric fibers of $g$ are generically irreducible implies that there exists a unique irreducible component of $(f \circ g)^{-1}(x)$ which dominates the $f$-fiber $f^{-1}(x)$. On the other hand, the condition that ${\modu Z}$ is irreducible ensures that, for general $x \in X(\kk)$, all irreducible components of $(f \circ g)^{-1}(x)$ dominate $f^{-1}(x)$, completing the proof.
\end{proof}

\begin{sublem}\label{fiber fact}
Assume that $X$ is a smooth connected complex projective variety and that the two maps $f_i:{\modu Y}_i \to X$, $i=1,2$ are proper, of finite type, dominant, have irreducible general geometric fibers and the sources ${\modu Y}_i$ are irreducible. Then there exists a unique irreducible component of ${\modu Y}_1 \times_X {\modu Y}_2$ dominating $X$, which we'll call the main component of the fibered product. 

Moreover if $y_i \in {\modu Y}_i(\cc)$, $i=1,2$, are points such that the respective fibers of $f_i$ have the expected dimension $\dim {\modu Y}_i - \dim X$ at $y_i$, then $(y_1,y_2)$ belongs to the main component of ${\modu Y}_1 \times_X {\modu Y}_2$ and no other components.
\end{sublem}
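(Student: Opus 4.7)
The plan is to treat the two assertions separately: uniqueness of the dominant component via generic fiber analysis, and the local statement near $(y_1,y_2)$ via dimension counting. For uniqueness, I would analyze the generic fiber of $g:{\modu Y}_1\times_X {\modu Y}_2 \to X$. Since ${\modu Y}_i$ is irreducible and $f_i$ is dominant, $({\modu Y}_i)_{\eta_X}$ is an irreducible scheme over $K(X)$. The hypothesis that general geometric fibers of $f_i$ are irreducible is equivalent to $K(X)$ being algebraically closed in $K({\modu Y}_i)$, hence to $({\modu Y}_i)_{\eta_X}$ being geometrically irreducible over $K(X)$. The product of two geometrically irreducible $K(X)$-schemes is irreducible, so $g^{-1}(\eta_X)=({\modu Y}_1)_{\eta_X}\times_{\eta_X}({\modu Y}_2)_{\eta_X}$ is irreducible, and its closure in ${\modu Y}_1\times_X {\modu Y}_2$ is the unique component $M$ dominating $X$.

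For the second assertion, I would first establish that every irreducible component through $(y_1,y_2)$ has dimension exactly $\dim {\modu Y}_1 + \dim {\modu Y}_2 - \dim X$. The lower bound is standard: the fibered product is the pullback of $\Delta_X\subset X\times X$ under $(f_1,f_2):{\modu Y}_1\times{\modu Y}_2\to X\times X$, and since $\Delta_X$ has codimension $\dim X$ in $X\times X$ and the source is irreducible, every component has dimension at least $\dim{\modu Y}_1+\dim{\modu Y}_2-\dim X$. For a matching upper bound at $(y_1,y_2)$, I would combine the expected-dimension hypothesis
\[
\dim_{(y_1,y_2)}g^{-1}(x) = \dim_{y_1}f_1^{-1}(x) + \dim_{y_2}f_2^{-1}(x) = \dim{\modu Y}_1+\dim{\modu Y}_2-2\dim X
\]
with the standard inequality $\dim_v V\le\dim_{\phi(v)}W+\dim_v \phi^{-1}(\phi(v))$ applied to $g$ at $(y_1,y_2)$.

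Finally, I would rule out non-dominant components through $(y_1,y_2)$. If $C$ were such a component, then $\dim\overline{g(C)}<\dim X$, and the fiber-dimension theorem applied to $C\to\overline{g(C)}$ would yield
\[
\dim_{(y_1,y_2)}\bigl(C\cap g^{-1}(x)\bigr) \ge \dim C - \dim\overline{g(C)} > \dim{\modu Y}_1+\dim{\modu Y}_2-2\dim X,
\]
contradicting the fiber-dimension computation above. Hence every component through $(y_1,y_2)$ dominates $X$ and must coincide with $M$. No step is genuinely hard; the one point I would treat carefully is the local inequality $\dim_v V\le\dim_{\phi(v)}W+\dim_v \phi^{-1}(\phi(v))$, which is standard for finite-type morphisms and is verified component-by-component through $v$ using the fiber-dimension theorem.
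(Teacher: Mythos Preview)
Your proof is correct and rests on the same two pillars as the paper's: the diagonal $\Delta_X\subset X\times X$ is a local complete intersection, forcing every component of the fiber product to have at least the expected dimension, and the fiber-dimension constraint at the given point then forces any component through $(y_1,y_2)$ to dominate $X$.

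The only notable difference is organizational. The paper packages both assertions into a single statement by first passing to the open loci ${\modu V}_i\subseteq{\modu Y}_i$ where the fibers of $f_i$ have the expected dimension, and then showing directly that ${\modu V}_1\times_X{\modu V}_2$ is irreducible; since this is open in ${\modu Y}_1\times_X{\modu Y}_2$ and contains $(y_1,y_2)$, its closure is the unique component through that point. You instead treat the two assertions separately, using a generic-point/function-field argument for the first (which the paper just asserts in one line via ``general fibers irreducible $\Rightarrow$ general fibers of the product map irreducible'') and a local dimension analysis for the second. Your route is slightly longer but perhaps more explicit about why the uniqueness holds; the paper's restriction-to-${\modu V}_i$ trick is a bit slicker because it makes the equidimensionality hypothesis do all the work at once.
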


\begin{proof}[Sketch of proof.]
Let ${\modu V}_i \subseteq {\modu Y}_i$ be the open loci where the fibers have the expected dimension. It suffices to prove that ${\modu V}_1 \times_X {\modu V}_2$ is irreducible. On one hand, because the geometric fibers of ${\modu V}_i \to X$ are generally irreducible, it follows that the same holds for ${\modu V}_1 \times_X {\modu V}_2 \to X$, so there exists a unique component of ${\modu V}_1 \times_X {\modu V}_2$ dominating $X$. On the other hand, 
$$ {\modu V}_1 \times_X {\modu V}_2 = (v_1 \times v_2)^{-1}(\Delta), $$
where $\Delta \subset X^2$ is the diagonal and $v_i$ is the restriction of $f_i$ to ${\modu V}_i$, so, because the diagonal is a local complete intersection, any irreducible component ${\modu W}$ of ${\modu V}_1 \times_X {\modu V}_2$ must have at least the expected dimension $\dim {\modu Y}_1 + \dim {\modu Y}_2 - \dim X$, so it must dominate $X$ by the definition of the ${\modu V}_i$.
\end{proof}


\begin{thebibliography}{ppppppp}
%\bibliographystyle{alpha}

\bibitem[Buj14]{[Bu14]}
G. Bujokas, \emph{The Hurwitz space of covers of an elliptic curve $E$ and the Severi variety of curves in $E \times {\mathbb P}^1$}, preprint available at \url{https://arxiv.org/abs/1409.0927} (2014)
\bibitem[CaHa98]{[CH98]}
L. Caporaso and J. Harris, \emph{Counting plane curves of any genus}, Invent. Math. \textbf{131}, 345--392 (1998)
\bibitem[Chen02]{[Ch01]}
X. Chen, \emph{A simple proof that rational curves on K3 are nodal}, Math. Ann. \textbf{324}, 71--104 (2002)
\bibitem[Chen16]{[Ch16]}
X. Chen, \emph{Nodal curves on K3 surfaces}, preprint available at \url{https://arxiv.org/abs/1611.07423} (2016)
\bibitem[DeMu69]{[DeMu69]}
P. Deligne and D. Mumford, \emph{The irreducibility of the space of curves of given genus} Publ. Math. IH\'{E}S \textbf{36}, 75--109 (1969)
\bibitem[DiHa88]{[DiHa88]}
S. Diaz and J. Harris, \emph{Geometry of the Severi variety}, Trans. Amer. Math. Soc. \textbf{309}, 1--34  (1988)
\bibitem[Gat02]{[Ga01]}
A. Gathmann, \emph{Absolute and relative Gromov-Witten invariants of very ample hypersurfaces}, Duke Math. J. \textbf{115}, 171--203 (2002)
\bibitem[Har86]{[Ha86]}
J. Harris, \emph{On the Severi problem}, Invent. Math. \textbf{84}, 445--462 (1986)
\bibitem[HaMo98]{[HM01]}
J. Harris and I. Morrison, \emph{Moduli of curves}, Graduate Texts in Mathematics, vol 187. Springer-Verlag (1998)
\bibitem[Li01]{[Li01]}
J. Li, \emph{Stable morphisms to singular schemes and relative stable morphisms}, J. Diff. Geom. \textbf{57}, 509--578 (2001)
\bibitem[Li02]{[Li02]}
J. Li, \emph{A degeneration formula of GW-invariants}, J. Diff. Geom. \textbf{60}, 199--293 (2002)
\bibitem[Ran89]{[Ran89]}
Z. Ran, \emph{Enumerative geometry of singular plane curves}, Invent. Math. \textbf{86}, 447--465 (1989)
\bibitem[Sev21]{[Se21]}
F. Severi, \emph{Anhang F} in \emph{Vorlesungen \"{u}ber algebraische Geometrie}, Teubner (1921)
\bibitem[Tes10]{[Te10]}
D. Testa, \emph{The irreducibility of the spaces of rational curves on del Pezzo surfaces}, J. Algebraic Geom. \textbf{18}, 37--61(2010)
\bibitem[Tyo07]{[Ty06]}
I. Tyomkin, \emph{On Severi varieties on Hirzebruch surfaces}, Int. Math. Res. Not. 2007, rnm109-31 doi:10.1093/imrn/rnm109 (2007)

\end{thebibliography}
\end{document}